\theoremstyle{plain}
\newtheorem{theorem}{Theorem}
\newtheorem{proposition}{Proposition}
\newtheorem{lemma}{Lemma}
\newtheorem{definition}{Definition}
\newtheorem{remark}{Remark}
\newlength{\widebarargwidth}
\newlength{\widebarargheight}
\newlength{\widebarargdepth}
\DeclareRobustCommand{\widebar}[1]{%
  \settowidth{\widebarargwidth}{\ensuremath{#1}}%
  \settoheight{\widebarargheight}{\ensuremath{#1}}%
  \settodepth{\widebarargdepth}{\ensuremath{#1}}%
  \addtolength{\widebarargwidth}{-0.3\widebarargheight}%
  \addtolength{\widebarargwidth}{-0.3\widebarargdepth}%
  \makebox[0pt][l]{\hspace{0.3\widebarargheight}%
    \hspace{0.3\widebarargdepth}%
    \addtolength{\widebarargheight}{0.3ex}%
    \rule[\widebarargheight]{0.95\widebarargwidth}{0.1ex}}%
  {#1}}
\long\def\@makecaption#1#2{
        \vskip 0.8ex
        \setbox\@tempboxa\hbox{\small {\bf #1:} #2}
        \parindent 1.5em  %% How can we use the global value of this???
        \dimen0=\hsize
        \advance\dimen0 by -3em
        \ifdim \wd\@tempboxa >\dimen0
                \hbox to \hsize{
                        \parindent 0em
                        \hfil 
                        \parbox{\dimen0}{\def\baselinestretch{0.96}\small
                                {\bf #1.} #2
                                %%\unhbox\@tempboxa
                                } 
                        \hfil}
        \else \hbox to \hsize{\hfil \box\@tempboxa \hfil}
        \fi
        }
\long\def\comment#1{}
\newcommand{\dH}{\mathsf{d}_{\mathsf{H}} }
\newcommand{\1}{\ensuremath{{\sf (i)}}}
\newcommand{\2}{\ensuremath{{\sf (ii)}}}
\newcommand{\3}{\ensuremath{{\sf (iii)}}}
\newcommand{\4}{\ensuremath{{\sf (iv)}}}
\newcommand{\5}{\ensuremath{{\sf (v)}}}
\DeclareMathOperator{\cov}{cov}
\DeclareMathOperator{\trace}{trace}
\newcommand{\NORMAL}{\ensuremath{\mathcal{N}}}
\newcommand{\Fspace}{\ensuremath{\mathcal{F}}}
\newcommand{\xhat}{\ensuremath{\widehat{x}}}
\newcommand{\real}{\ensuremath{\mathbb{R}}}
\newcommand{\Pihat}{\ensuremath{\widehat{\Pi}}}
\newcommand{\Pihatml}{\ensuremath{\widehat{\Pi}_{{\sf ML}}}}
\newcommand{\Projorthpi}{\ensuremath{P_{\Pi}^{\perp}}}
\newcommand{\Projorthpistar}{\ensuremath{P_{\Pi^*}^{\perp}}}
\newcommand{\hballpistar}{\ensuremath{\mathbb{B}_{\sf H}}}
\newcommand{\SNR}{\ensuremath{\frac{\|x^*\|_2^2}{\sigma^2}}}
\newcommand{\snr}{\ensuremath{{\sf snr}}}
\newcommand{\EE}{\ensuremath{\mathbb{E}}}
\newcommand{\Rpihat}{\ensuremath{\Pr\{ \Pihat \neq \Pi^* \} }}
\newcommand{\Rpihatml}{\ensuremath{\Pr\{ \Pihatml \neq \Pi^* \}}}
\newcommand{\BigO}{\ensuremath{\mathcal{O}}}
\begin{document}

%%%%%%% TITLE PAGE %%%%%%%%%%%%%%%%%%%%%%%%%%%%%%%%%%%%%%%%%%%%%%%%%%%

\begin{center}

{\bf{\LARGE{Linear Regression with an Unknown Permutation: Statistical and Computational Limits}}}

\vspace*{.2in}

{\large{
\begin{tabular}{ccc}
Ashwin Pananjady$^\dagger$ &  Martin J. Wainwright$^{\dagger,\star}$ & Thomas A. Courtade$^\dagger$ \\
\end{tabular}
}}

\vspace*{.2in}

\begin{tabular}{c}
Department of Electrical Engineering and Computer Sciences$^\dagger$ \\
Department of Statistics$^\star$ \\
UC Berkeley
\end{tabular}

\vspace*{.2in}

\today

\vspace*{.2in}

\begin{abstract}
Consider a noisy linear observation model with an unknown permutation, based on observing $y = \Pi^* A x^* + w$, where $x^* \in \real^d$ is an unknown vector, $\Pi^*$ is an unknown $n \times n$ permutation matrix, and $w \in \real^n$ is additive Gaussian noise. We analyze the problem of permutation recovery in a random design setting in which the entries of the matrix $A$ are drawn i.i.d. from a standard Gaussian distribution, and establish sharp conditions on the SNR, sample size $n$, and dimension $d$ under which $\Pi^*$ is exactly and approximately recoverable. %We use these results to show sufficient conditions under which $x^*$ can be estimated at minimax rates.
On the computational front, we show that the maximum likelihood estimate of $\Pi^*$ is NP-hard to compute, while also providing a polynomial time algorithm when $d =1$.
\end{abstract}

\end{center}

%%%%%%%%%%%%%%%%%%%%%%%%%%%%%%%%%%%%%%%%%%%%%%%%%%%%%%%%%%%%%%%%%%%%%%%%%

\section{Introduction}
\label{SecIntroduction}
Recovery of a vector based on noisy linear measurements is the classical problem of linear regression, and is arguably the most basic problem in statistical inference. A variant, the ``errors-in-variables" model \cite{little2014statistical}, allows for errors in the measurement matrix, but mainly in the form of additive or multiplicative noise \cite{loh2012corrupted}. In this paper, we study a form of errors-in-variables in which the measurement matrix is perturbed by an unknown permutation of its rows.

More concretely, we study an observation model of the form 
\begin{align}
y = \Pi^* A x^* + w, \label{obsmodel}
\end{align}
%\begin{align}
%y = \Pi^* A x^* + w, \label{obsmodel1}
%\end{align}
%\mycomment{want to make distinction between noiseless and noisy models and label them separately}
where $x^* \in \real^d$ is an unknown vector, $A \in \real^{n\times d}$ is a measurement (or design) matrix, $\Pi^*$ is an unknown $n\times n$ permutation matrix, and $w \in \real^n$ is observation noise. We refer to the setting where $w= 0$ as the \emph{noiseless case}. %and $w \sim \NORMAL(0, \sigma^2 I_n)$ is an $n$-vector of uncorrelated Gaussian noise variables.
As with linear regression, there are two settings of interest, corresponding to whether the design matrix is \textbf{(i)} deterministic (the fixed design case), or \textbf{(ii)}~random (the random design case).

There are also two complementary problems of interest -- recovery of the unknown $\Pi^*$, and recovery of the unknown $x^*$. In this paper, we focus on the former problem; the latter problem is also known as unlabelled sensing \cite{unl}.

The observation model~\eqref{obsmodel} is frequently encountered in scenarios where there is uncertainty in the order in which measurements are taken. An illustrative example is that of sampling in the presence of jitter \cite{balakrishnan1962problem}, in which the uncertainty about the instants at which measurements are taken results in an unknown permutation of the measurements. A similar synchronization issue occurs in timing and molecular channels \cite{rose1}. Here, identical molecular tokens are received at the receptor at different times, and their signatures are indistinguishable. The vectors of transmitted and received times correspond to the signal and the observations, repectively, where the latter is some permuted version of the former with additive noise.

Another such scenario arises in multi-target tracking problems \cite{poore2006some}. For example, SLAM tracking \cite{thrun2008simultaneous} is a classical problem in robotics where the environment in which measurements are made is unknown, and part of the problem is to infer relative permutations between measurements. Archaeological measurements \cite{robinson1951method} also suffer from an inherent lack of ordering, which makes inference of chronology hard. Another compelling example of such an observation model  is in data anonymization, in which the order, or ``labels", of measurements are intentionally deleted to preserve privacy. The inverse problem of data de-anonymization \cite{narayanan2008robust} is to infer these labels from the observations.

Also, in large sensor networks, it is often the case that the number of bits of information that each sensor records and transmits to the server is exceeded by the number of bits it transmits in order to identify itself to the server \cite{keller2009identity}. In applications where sensor measurements are linear, model~\eqref{obsmodel} corresponds to the case where each sensor only sends its measurement but not its identity. The server is then tasked with recovering sensor identites, or equivalently, with determining the unknown permutation. 

The pose and correspondence estimation problem in image processing \cite{david2004softposit, marques2009subspace} is also related to the observation model~\eqref{obsmodel}. The capture of a 3D object by a 2D image can be modelled by an unknown linear transformation called the ``pose", and an unknown permutation representing the ``correspondence" between points in the two spaces. One of the central goals in image processing is to identify this correspondence information, which in this case is equivalent to permutation estimation in the linear model. An illustration of the problem is provided in Figure~\ref{fig:pose}.

\begin{figure}
    \begin{center}
    \includegraphics[scale=.35]{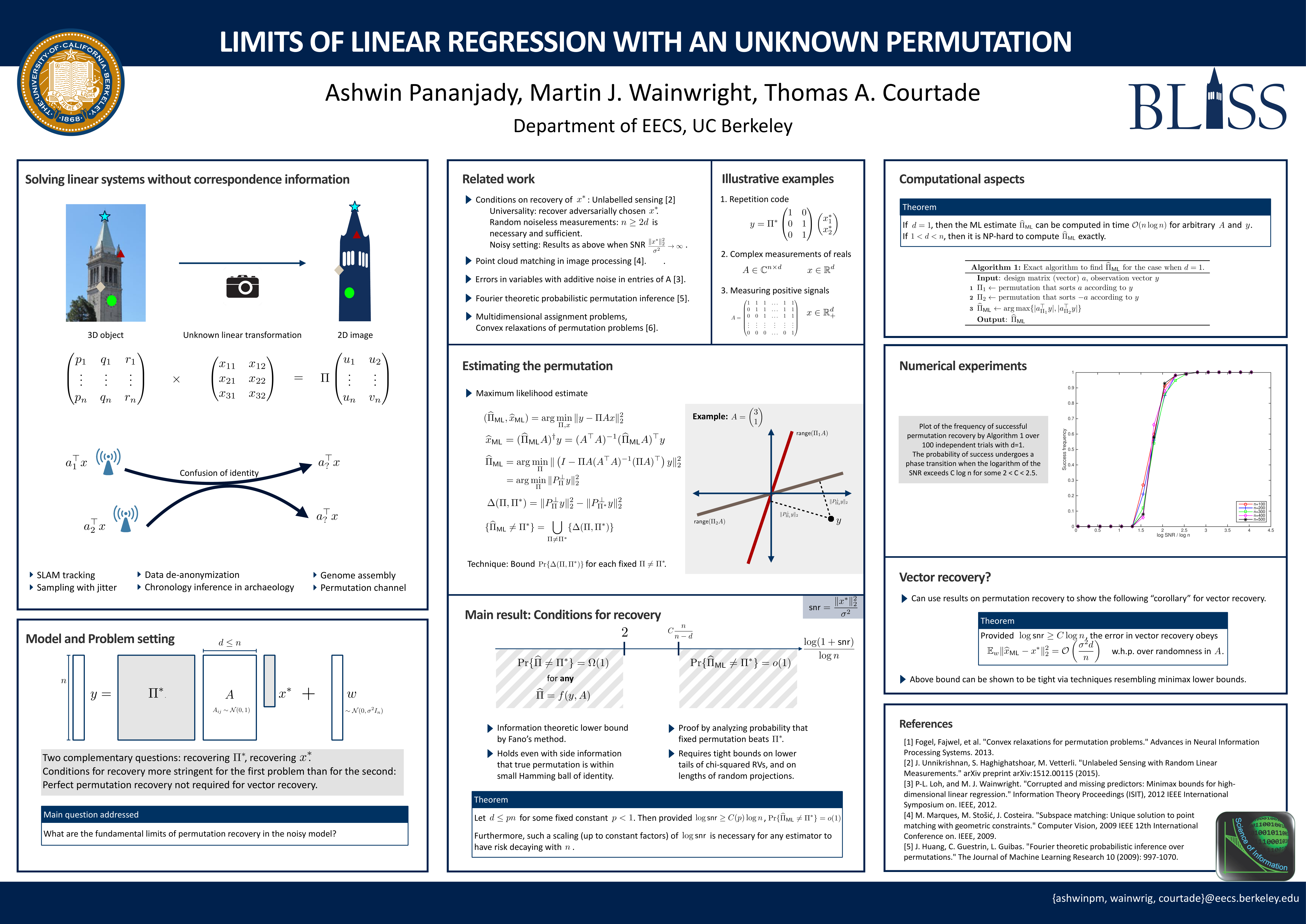}
    \caption{Example of pose and correspondence estimation. The camera introduces an unknown linear transformation corresponding to the pose. The unknown permututation represents the correspondence between points, which is shown in the picture via coloured shapes, and needs to be estimated. } \label{fig:pose}
    \end{center}
\end{figure}

%insert work on timing channels.

The discrete analog of the model~\eqref{obsmodel} in which the vectors $x^*$ and $y$, and the matrix $A$ are all constrained to belong to some finite alphabet/field corresponds to the permutation channel studied by Schulman and Zuckerman \cite{schulman1999asymptotically}, with $A$ representing the (linear) encoding matrix. However, techniques for the discrete problem do not carry over to the continuous problem~\eqref{obsmodel}. 

Another line of work that is related in spirit to the observation model~\eqref{obsmodel} is the genome assembly problem from shotgun reads \cite{huang1999cap3}, in which an underlying vector $ x^* \in \{A,T,G, C\}^d$ must be assembled from an unknown permutation of its continuous sub-vector measurements, called ``reads". Two aspects, however, render it a particularization of our observation model, besides the obvious fact that $x^*$ in the genome assembly problem is constrained to a finite alphabet: (i)~in genome assembly, the matrix $A$ is fixed and consists of shifted identity matrices that select sub-vectors of $ x^*$, and (ii) the permutation matrix of genome assembly is in fact a block permutation matrix that permutes sub-vectors instead of coordinates as in equation~\eqref{obsmodel}. %The models become equivalent when reads are of length $1$, and $A = I$, but this is not an interesting case in either problem.

\subsection{Related work}
Previous work related to the observation model \eqref{obsmodel} can be broadly classified into two categories -- those that focus on $x^*$ recovery, and those focussed on recovering the underlying permutation. We discuss the most relevant results below.

\subsubsection{Latent vector estimation}
The observation model~\eqref{obsmodel} appears in the context of compressed sensing with an unknown sensor permutation \cite{emiya2014compressed}. The authors consider the matrix-based observation model $Y = \Pi^* AX^* + W$, where $X^*$ is a matrix whose columns are composed of multiple unknown vectors. Their contributions include a branch and bound algorithm to recover the underlying $X^*$, which they show to perform well empirically for small instances under the setting in which the entries of the matrix $A$ are drawn i.i.d. from a Gaussian distribution.

In the context of pose and correspondence estimation, the paper \cite{marques2009subspace} considers the noiseless observation model~\eqref{obsmodel}, and shows that if the permutation matrix maps a sufficiently large number of positions to themselves, then $x^*$ can be recovered reliably.

In the context of molecular channels, the model~\eqref{obsmodel} has been analyzed for the case when $x^*$ is some random vector, $A = I$, and $w$ represents non-negative noise that models delays introduced between emitter and receptor. Rose et al. \cite{rose1} provide lower bounds on the capacity of such channels. In particular, their results yield closed-form lower bounds for some special noise distributions, e.g., exponentially random noise.

A more recent paper \cite{unl} that is most closely related to our model considers the question of when the equation~\eqref{obsmodel} has a unique solution $x^*$, i.e., the identifiability of the noiseless model. The authors show that if the entries of $A$ are sampled i.i.d. from any continuous distribution with $n \geq 2d$, then equation~\eqref{obsmodel} has a unique solution $x^*$ with probability $1$. They also provide a converse showing that if $n < 2d$, any matrix $A$ whose entries are sampled i.i.d. from a continuous distribution does not (with probability $1$) have a unique solution $x^*$ to equation~\eqref{obsmodel}. While the paper shows uniqueness, the question of designing an efficient algorithm to recover a solution, unique or not, is left open. The paper also analyzes the stability of the noiseless solution, and establishes that $x^*$ can be recovered exactly when the SNR goes to infinity.

We also briefly compare the model~\eqref{obsmodel} with the problem of vector recovery in unions of subspaces, studied widely in the compressive sensing literature \cite{lu2008theory, blumensath2011sampling}. In the compressive sensing setup, the vector $x^*$ lies in the union of finitely many subspaces, and must be recovered from linear measurements with a random matrix, without a permutation. In our model, on the other hand, the vector $x^*$ is unrestricted, and the observation $y$ lies in the union of $n!$ subspaces -- one for each permutation. While the two models share a superficial connection, results do not carry over from one to the other in any obvious way. In fact, our model is fundamentally different from traditional compressive sensing, since the unknown permutation acts on the \emph{row space} of the design matrix $A$. In contrast, restricting $x^*$ to a union of subspaces (or more specifically, restricting its sparsity) influences the column space of $A$.
%XXX insert work on sampling band limited signals.
\subsubsection{Latent permutation estimation}
While our paper seems to be the first to consider permutation recovery in the linear regression model \eqref{obsmodel}, there are many related problems for which permutation recovery has been studied. We mention only those that are most closely related to our work.

The problem of feature matching in machine learning \cite{collier2016minimax} bears a superficial resemblance to our observation model. There, observations take the form $Y = X^* + W$ and $Y' = \Pi^* X^* + W'$, with all of $(X^*, Y, Y', W, W')$ representing matrices of appropriate dimensions, and the goal is to recover $\Pi^*$ from the tuple $(Y, Y')$. The paper \cite{collier2016minimax} establishes minimax rates on the separation between the rows of $X^*$ (as a function of problem parameters $n, d, \sigma$) that allow for exact permutation recovery.

The problem of statistical seriation \cite{rigollet} involves an observation model of the form $Y = \Pi^* X^* + W$, with the matrix $X^*$ obeying some shape constraint. In particular, if the columns of $X^*$ are unimodal (or, as a special case, monotone), then Flammarion et al. \cite{rigollet} establish minimax rates for the problem in the prediction error metric $\|\Pihat \widehat{X} - \Pi^* X^* \|_F^2$ by analyzing the least squares estimator. The seriation problem was also considered by Fogel et al. \cite{fogel2013convex} in the context of designing convex relaxations to permutation problems.

Permutation estimation has also been considered in other observation models involving matrices with structure, particularly in the context of ranking \cite{nihar, chatterjee}, or even more generally, in the context of \emph{identity management} \cite{huang2009fourier}. While we mention both of these problems because are related in spirit to permutation recovery, the problem setups do not bear too much resemblance to our linear model \eqref{obsmodel}.

Algorithmic approaches to solving for $\Pi^*$ in equation~\eqref{obsmodel} are related to the multi-dimensional assignment problem. In particular, while finding the correct permutation mapping between two vectors minimizing some loss function between them corresponds to the 1-dimensional assignment problem, here we are faced with an assignment problem between subspaces. While we do not elaborate on the vast literature that exists on solving variants on assignment problems, we note that broadly speaking, assignment problems in higher dimensions are much harder than the 1-D assignment problem. A survey on the quadratic assignment problem \cite{loiola2007survey} and references therein provide examples and methods that are currently used to solve these problems.

%Other work of a similar flavor includes the more general problem of designing convex relaxations for permutation problems \cite{fogel2013convex}, and the problem of \emph{identity management} \cite{huang2009fourier}. While we mention both of these problems because are related in spirit to permutation recovery, the techniques are not directly applicable to our linear model \eqref{obsmodel}.

\subsection{Contributions} 
Our primary contribution addresses permutation recovery in the noisy version of observation model~\eqref{obsmodel}, with a random design matrix $A$. In particular, when the entries of $A$ are drawn i.i.d. from a standard Gaussian matrix, we show sharp conditions on the SNR under which exact permutation recovery is possible. We also derive necessary conditions for approximate permutation recovery to within a prescribed Hamming distortion.
%We use these results to derive upper bounds on the risk of recovering $x^*$.

We also briefly address the computational aspect of the permutation recovery problem. We show that the information theoretically optimal estimator we propose for exact permutation recovery is NP-hard to compute in the worst case. For the special case of $d= 1$, however, we show that it can be computed in polynomial time. Our results are corroborated by numerical simulations.

\subsection{Organization} 
The paper is organized as follows. In the next section, we set up notation and formally state the problem. In Section~\ref{sec:mainresult}, we state our main results and discuss some of their implications. We provide proofs of the main results in Section~\ref{sec:proofs}, deferring the more technical lemmas to the appendices. %We also present some numerical experiments in Section~\ref{sec:sims} to validate our theoretical results.

%%%%%%%%%%%%%%%%%%%%%%%%%%%%%%%%%%%%%%%%%%%%%%%%%%%%%%%%%%%%%%%%%%%%%%%%
\section{Background and problem setting}
In this section, we set up notation, state the formal problem, and provide concrete examples of the noiseless version of our observation model by considering some fixed design matrices.

\subsection{Notation}

Since most of our analysis involves metrics involving permutations, we introduce all the relevant notation in this section. Permutations are denoted by $\pi$ and permutation matrices by $\Pi$. We use $\pi(i)$ to denote the image of an element $i$ under the permutation $\pi$. With a minor abuse of notation, we let $\mathcal{P}_n$ denote both the set of permutations on $n$ objects as well as the corresponding set of permutation matrices. We sometimes use the compact notation $y_\pi$ (or~$y_\Pi$) to denote the vector $y$ with entries permuted according to the permutation $\pi$ (or $\Pi$).

We let $\dH(\pi, \pi')$ denote the Hamming distance between two permutations. More formally, we have ${\dH(\pi, \pi') := \#\{ i \mid \pi(i) \neq \pi'(i)\}}$. For convenience, we let $\dH(\Pi, \Pi')$ denote the Hamming distance between two permutation matrices, which is to be interpreted as the Hamming distance between the corresponding permutations. %It is also helpful to define the set $\mathbb{B}^{\sf H}_{\Pi}(k) \triangleq \{ \Pi' \in \mathcal{P}_n: h(\Pi, \Pi') \leq k\}$, i.e., the set of permutations that are at a Hamming distance at most $k$ from $\Pi$. Note that $\mathbb{B}^{\sf H}_\Pi(n) = \mathcal{P}_n$ for all $\Pi$.

The notation $v_i$ denotes the $i$th entry of a vector $v$. We denote the $i$th standard basis vector in $\mathbb{R}^d$ by $e_i$. We use the notation $a_i^\top$ to refer to the $i$th row of $A$. We also use the standard shorthand notation $[n]:=\{1, 2, \ldots, n \}$.

We also make use of standard asymptotic $\BigO$ notation. Specifically, for two real sequences $f_n$ and $g_n$, $f_n = \BigO(g_n)$ means that $f_n \leq C g_n$ for a universal constant $C>0$. Lastly, all logarithms denoted by $\log$ are to the base $e$, and we use $c_1, c_2$, etc. to denote absolute constants that are independent of other problem parameters.

\subsection{Formal problem setting and permutation recovery} \label{sec:setting}
%We address a problem that is slightly more general than the setup in the introduction. In addition to the assumptions stated there about the observation model~\eqref{obsmodel1}, we also assume that $\Pi^* \in \mathbb{B}^{\sf H}_\Pi(\bar{h})$ for some parameter $\bar{h}$. In other words, we assume that at most $\bar{h}$ of our measurements have been permuted. In certain applications like multi-target tracking, for example, such a condition corresponds to the reasonable claim that we are sure about the labels of some sensors (that are close to the targets, say). For $\bar{h} = n$, we recover the original formulation of an arbitrary permutation in $\mathcal{P}_n$.

%From here on, we assume that $\Pi^* \in \mathbb{B}^{\sf H}_I(\bar{h})$ in both observation models~\eqref{obsmodel} and~\eqref{obsmodel1}. Our formal observation model can be stated as
As mentioned in the introduction, we focus exclusively on the noisy observation model in the random design setting. In other words, we obtain an $n$-vector of observations $y$ from the model~\eqref{obsmodel} with $n \geq d$ to ensure identifiability, and with the following assumptions: %of the model~\eqref{obsmodel}.
%\begin{equation} \label{eq:setup}
%\begin{aligned}
%y = \Pi^* A x^* + w,
%\\
%&y \in \real^n \\
%&A_{ij} \sim \NORMAL(0, 1) \\
%&w \sim \NORMAL(0, \sigma^2 I_n) \\
%&\Pi^* \in \mathcal{P}_n \\
%&x^* \in \real^d.
%\end{aligned}
%\end{equation}
%\mycomment{Take care, removed ball constraint.}
\paragraph*{Signal model} The vector $x^* \in \real^d$ is fixed, but unknown. We note that this is different from the \emph{adversarial} signal model of Unnikrishnan et al. \cite{unl}, and we provide clarifying examples in Section \ref{sec:examples}.

\paragraph*{Measurement matrix} The measurement matrix $A \in \real^{n \times d}$ is a random matrix of i.i.d. standard Gaussian variables chosen without knowledge of $x^*$. Our assumption on i.i.d. standard Gaussian designs easily extends to accommodate the more general case when rows of $A$ are drawn i.i.d. from the distribution $\NORMAL(0, \Sigma)$. In particular, writing $A = W \sqrt{\Sigma}$, where $W$ in an $n \times d$ standard Gaussian matrix and $\sqrt{\Sigma}$ denotes the symmetric square root of the (non-singular) covariance matrix $\Sigma$, our observation model takes the form
$$y = \Pi^* W \sqrt{\Sigma} x^* + w,$$
and the unknown vector is now $\sqrt{\Sigma}x^*$ in the model \eqref{obsmodel}.

\paragraph*{Noise variables} The vector $w \sim \NORMAL(0, \sigma^2 I_n)$ represents uncorrelated noise variables, each of (possibly unknown) variance $\sigma^2$. As will be made clear in the analysis, our assumption that the noise is Gaussian also readily extends to accommodate i.i.d. $\sigma$-sub-Gaussian noise.
Additionally, the permutation noise represented by the unknown permutation matrix $\Pi^*$ is arbitrary.

%We focus on the overdetermined regime in which 

% Additionally, 
%y_i &= \langle a_{\pi^*(i)}, x^*\rangle + w_i, \qquad (i = 1, 2, \ldots, n)
The main recovery criterion addressed in this paper is that of exact permutation recovery, which is formally described below. Following that, we also discuss two other relevant recovery criteria.

\paragraph*{Exact permutation recovery} The problem of exact permutation recovery is to recover $\Pi^*$, and the risk of an estimator is evaluated on the $0$-$1$. %loss of exact permutation recovery.
More formally, given an estimator of $\Pi^*$ denoted by $\Pihat : (y, A) \to \mathcal{P}_n$, we evaluate its risk by
\begin{align}
\Rpihat = \EE \left[ \mathbf{1} \{\Pihat \neq \Pi^*\} \right],
\end{align}
where the probability in the LHS is taken over the randomness in $y$ induced by both $A$ and $w$.
%%%%%%%%%%%%%%%%%%%%%%%%%%%%%%%%%%%%%%%%%%%%%%%%%%%%%%%%%%%%%%%%%%%%%%%%

%\noindent Our analysis also applies to two variations of the problem stated above:
\paragraph*{Approximate permutation recovery} It is reasonable to think that recovering $\Pi^*$ up to some distortion is sufficient for many applications. Such a relaxation of exact permutation recovery allows the estimator to output a $\Pihat$ such that $\dH(\Pihat, \Pi^*) \leq D$, for some distortion $D$ to be specified. The risk of such an estimator is again evaluated on the $0$-$1$ loss of this error metric, given by
$\Pr\{\dH(\Pihat, \Pi^*) \geq D\}$, 
with the probability again taken over both $A$ and $w$.
While our results are derived mainly in the context of exact permutation recovery, they can be suitably modified to also yields results for approximate permutation recovery.

\paragraph*{Recovery with side information} In this variation, the unknown permutation matrix is not arbitrary, but known to be in some Hamming ball around the identity matrix. In other words, the estimator is provided with side information that $\dH(\Pi^*, I) \leq \bar{h}$, for some $\bar{h} < n$. In many applications, this may constitute a prior that leads us to believe that the permutation matrix is not arbitrary. In multi-target tracking, for example, we may be sure that at any given time, a certain number of measurements correspond to the true sensors that made them (that are close to the target, perhaps). Our results also address the exact permutation recovery problem with side information.
\newline

We now briefly give some examples in which the noiseless version is identifiable. 

\subsection{Illustrative examples of the noiseless model} \label{sec:examples}
In this section, we present two examples to illustrate the problem of permutation recovery and highlight the difference between our signal model and that of Unnikrishnan et al. \cite{unl}.

\paragraph*{Example 1} Consider the noiseless case of the observation model \eqref{obsmodel}. Let ${\nu_i, \nu'_i \;\; (i = 1, 2, \ldots, d)}$ represent i.i.d. continuous random variables, and form the design matrix $A$ by choosing
\begin{align}
a_{2i -1}^\top := \nu_i e_i^\top \text{ and } a_{2i}^\top = \nu'_i e_i^\top, \;\;\; i = 1, 2, \ldots, d. \nonumber
\end{align}
Note that $n = 2d$.
Now consider our fixed but unknown signal model for $x^*$. Since the permutation is arbitrary, our observations can be thought of as the unordered set ${\{\nu_i x^*_i, \nu'_i x^*_i \mid i \in [d] \}}$. With probability $1$, the ratios $r_i := \nu_i / \nu'_i$ are distinct for each $i$, and also that $\nu_i x^*_i \neq \nu_j x^*_j$ with probability $1$, by assumption of a fixed $x^*$. Therefore, there is a one to one correspondence between the ratios $r_i$ and $x^*_i$. All ratios are computable in time $\BigO(n^2)$, and $x^*$ can be exactly recovered. Using this information, we can also exactly recover $\Pi^*$. %Note that here, we had $n =

\paragraph*{Example 2} A particular case of this example was already observed by Unnikrishnan et al. \cite{unl}, but we include it to illustrate the difference between our signal model and the adversarial signal model.
Form the fixed design matrix $A$ by including $2^{i-1}$ copies of the vector $e_i$ among its rows. We therefore\footnote{Unnikrishnan et al. \cite{unl} proposed that $e_i$ be repeated $i$ times, but it is easy to see that this does not ensure recovery of an adversarially chosen $x^*$.} have $n = \sum_{i=1}^d 2^{i-1} = 2^d - 1$.

Our observations therefore consist of $2^{i-1}$ repetitions of $x^*_i$ for each $i \in [d]$. The value of $x^*_i$ can therefore be recovered by simply counting the number of times it is repeated, with our choice of the number of repetitions also accounting for cases when $x^*_i = x^*_j$ for some $i \neq j$. Notice that we can now recover \emph{any} vector $x^*$, even those chosen adversarially with knowledge of the $A$ matrix. Therefore, such a design matrix allows for an \emph{adversarial} signal model, in the flavor of compressive sensing \cite{candesCS}.

 Having provided examples of the noiseless observation model, we now return to the noisy setting of Section \ref{sec:setting}, and state our main results.

\section{Main results} \label{sec:mainresult}
In this section, we state our main theorems and discuss their consequences. Proofs of the theorems can be found in Section~\ref{sec:proofs}.

\subsection{Statistical limits of exact permutation recovery} \label{sec:statresult}
Our main theorems in this section provide necessary and sufficient conditions under which the probability of error in exactly recovering the true permutation goes to zero. %with $n$, i.e., ${\Rpihat \stackrel{n \to \infty}{\rightarrow} 0}$.

In brief, provided that $d$ is sufficiently small, we establish a threshold phenomenon that characterizes how the signal-to-noise ratio $\snr: =\SNR$ must scale relative to $n$ in order to ensure identifiability.   More specifically, defining the ratio
\begin{align}
\Gamma\left( n, \snr \right) := \frac{\log \left(1 + \snr \right)}{\log n}, \nonumber
\end{align}
we show that the maximum likelihood estimator recovers the true permutation with high probability provided $\Gamma(n, \snr) \gg c$, where $c$ denotes an absolute constant.  Conversely,   if $\Gamma(n, \snr) \ll c$, then exact permutation recovery is impossible. 
For illustration, we have plotted the behaviour of the maximum likelihood estimator for the case when $d=1$ in Figure~\ref{sim}. Evidently, there is a sharp phase transition between error and exact recovery as the ratio $\Gamma(n, \snr)$ varies from 3 to 5. % which is consistent with  Theorems~\ref{thm:upperbound} and~\ref{thm:lowerbound}. 

\begin{figure}
    \begin{center}
    \includegraphics[scale=.55]{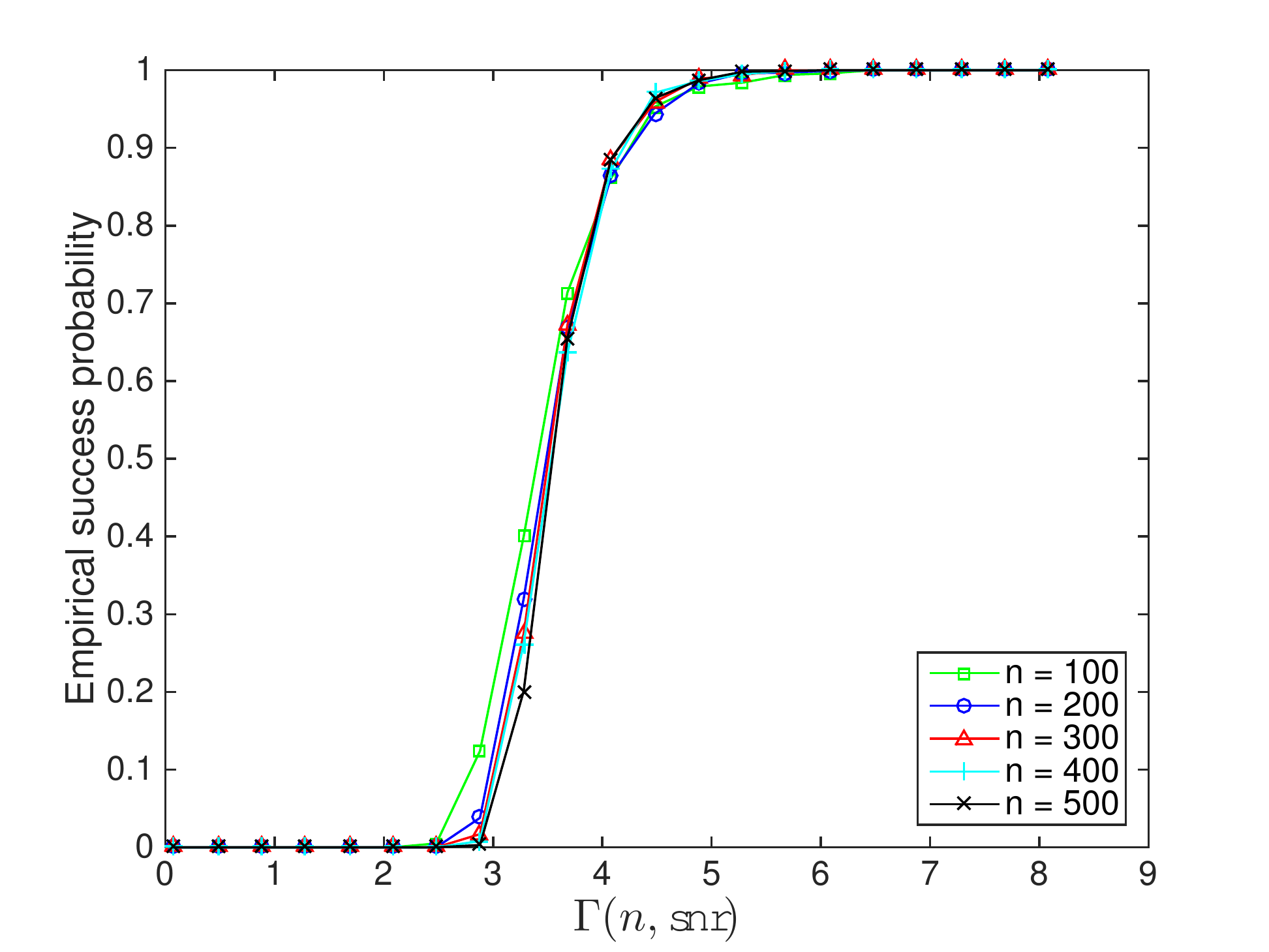}
    \caption{Empirical frequency of the event  $\{\Pihatml  = \Pi^*\}$ over $1000$ independent trials with ${d=1}$, plotted against $\Gamma\left( n, \snr \right)$ for different values of $n$. The probability of successful permutation recovery undergoes a phase transition as $\Gamma\left( n, \snr \right)$ varies from 3 to 5. This  is consistent with the prediction of Theorems~\ref{thm:upperbound} and~\ref{thm:lowerbound}.} \label{sim}
    \end{center}
\end{figure}

Let us now turn to more precise statements of our results.
We first define the maximum likelihood estimator (MLE) as
\begin{align}
(\Pihat_{{\sf ML}}, \xhat_{{\sf ML}}) = \arg\min_{\substack{\Pi \in \mathcal{P}_n \\ x \in \real^d}} \| y - \Pi A x\|^2_2.  \label{eq:ML}
\end{align}
%Now for a fixed $\Pi$, assuming that $A$ has full column rank\footnote{An $n \times d$ i.i.d. Gaussian random matrix has full column rank with probability $1$ as long as $d \leq n$}, the minimizing $x$ is simply $(\Pi A)^\dagger y$, where $X^\dagger = (X^\top X)^{-1} X^\top$ represents the pseudoinverse of a matrix $X$.
%By computing the minimum over $x \in \real^d$ in equation \eqref{eq:ML}, we find that the maximum likelihood estimate of the permutation is given by
%\begin{align}
%\Pihatml = \arg\min_{\Pi \in \mathcal{P}_n} \| \Projorthpi y \|^2_2, \label{eq:MLperm}
%\end{align}
%where $\Projorthpi = I - \Pi A (A^\top A)^{-1} (\Pi A)^\top$ denotes the projection onto the orthogonal complement of the column space of $\Pi A$.

The following theorem provides an upper bound on the probability of error of $\Pihatml$, with $(c_1, c_2)$ denoting absolute constants. %Our results are qualitatively different for the two regimes $d = 1$ and $d>1$. 
\begin{theorem} \label{thm:upperbound}
For any $d<n$ and $\epsilon < \sqrt{n}$, if
\begin{align}
\log \left(\SNR\right) \geq \left( c_1 \frac{n}{n -d} + \epsilon\right) \log n, \label{exactcondn}
\end{align} 
then %the probability of error of the maximum likelihood estimator $\Pihatml$ satisfies
$\Rpihatml \leq c_2 n^{-2\epsilon}$. %., where $c_1$ and $C_1$ are absolute constants.
%
%
%For any $d<n$, there exist absolute constants $c_1$ and $C_1$ such that for any $\epsilon \in (0, \sqrt{n})$, if
%\begin{align}
%\log \left(\SNR\right) \geq \left( c_1 \frac{n}{n -d} + \epsilon\right) \log n, \label{exactcondn}
%\end{align} 
%then %the probability of error of the maximum likelihood estimator $\Pihatml$ satisfies
%$\Rpihatml \leq C_1 n^{-2\epsilon}$.
\end{theorem}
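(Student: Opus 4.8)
The plan is to analyze the MLE by first profiling out the continuous variable $x$, reducing \eqref{eq:ML} to a discrete optimization over permutations, and then controlling a union bound over incorrect permutations organized by their Hamming distance to $\Pi^*$.

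First I would reduce the estimator to a projection problem. For a fixed $\Pi$, the inner minimization over $x \in \real^d$ in \eqref{eq:ML} is an ordinary least-squares fit, so its optimal value equals $\| (I - P_A)\, \Pi^\top y \|_2^2$, where $P_A := A(A^\top A)^{-1}A^\top$ is the projection onto the column space of $A$ and I have used $P_{\Pi A} = \Pi P_A \Pi^\top$. Hence $\Pihatml = \arg\min_{\Pi} \| (I-P_A)\Pi^\top y \|_2^2$, and the event $\{\Pihatml \neq \Pi^*\}$ is contained in the union over $\Pi \neq \Pi^*$ of the pairwise events in which $\Pi$ attains a residual no larger than that of $\Pi^*$. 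Writing $Q := \Pi^\top \Pi^*$, $\beta := A x^*$, and $w' := (\Pi^*)^\top w$ (which is $\NORMAL(0,\sigma^2 I)$ and independent of $A$), the identity $(I-P_A)\beta = 0$ shows that the residual at the truth is pure noise $\|(I-P_A)w'\|_2^2$, while the residual at $\Pi$ exceeds it by $\mu + L + \mathcal{Q}$. Here $\mu := \|(I-P_A)Q\beta\|_2^2 \geq 0$ is a deterministic \emph{signal leakage}, $L = 2\,(w')^\top Q^\top (I-P_A) Q \beta$ is linear in the noise with $L \sim \NORMAL(0, 4\sigma^2 \mu)$ (the variance equals $4\sigma^2\mu$ exactly because $Q$ is orthogonal), and $\mathcal{Q} = (w')^\top \big(Q^\top (I-P_A)Q - (I-P_A)\big) w'$ is a mean-zero quadratic form whose matrix is a difference of two rank-$(n-d)$ projections, so its operator norm is at most one and its Frobenius norm is $\BigO(\sqrt{n-d})$.

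Second I would bound the pairwise error conditionally on $A$. Since the error event is $\{L + \mathcal{Q} \leq -\mu\}$, a Gaussian tail bound on $L$ together with a Hanson--Wright bound on $\mathcal{Q}$ gives the conditional estimate $\exp(-c\mu/\sigma^2)$, valid whenever $\mu$ is not atypically large relative to $\sigma^2(n-d)$ (the complementary regime contributes negligibly and is handled separately). The crux is to take the expectation over $A$, that is, to control $\EE_A[\exp(-c\mu/\sigma^2)]$. Writing $\delta := (Q-I)\beta$, supported on the $h := \dH(\Pi,\Pi^*)$ coordinates moved by $Q$, one has $\mu = \delta^\top (I-P_A)\delta = \beta^\top (Q-I)^\top (I-P_A)(Q-I)\beta$, a quadratic form in the Gaussian $\beta$ whose positive eigenvalues number at most $r := \min(h-m,\, n-d)$, where $m$ is the number of nontrivial cycles of $Q$. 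This leads to a bound of the shape $\EE_A[\exp(-c\mu/\sigma^2)] \lesssim \big(\tfrac{c' n}{(n-d)\,\snr}\big)^{r/2}$.

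Third I would close the argument by a union bound organized by cycle type. The number of permutations moving exactly $h$ coordinates is at most $\binom{n}{h}\,h! \leq n^h$, so the total error is bounded by $\sum_{h} n^{h}\,\big(\tfrac{c'n}{(n-d)\,\snr}\big)^{r/2}$. Condition~\eqref{exactcondn} forces $\snr \gtrsim n^{c_1 n/(n-d)}$, which makes each term geometrically small; the binding contributions come from the two extremes, transpositions ($h=2$, dominant for small $d$) and full derangements ($h=n$, dominant when $d$ is close to $n$, which is precisely where the factor $n/(n-d)$ in the exponent originates). Summing the resulting geometric series yields $\Rpihatml \leq c_2 n^{-2\epsilon}$.

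The hard part will be Step two, namely controlling the lower tail of the signal leakage $\mu = \|(I-P_A)Q A x^*\|_2^2$ through $\EE_A[\exp(-c\mu/\sigma^2)]$. The difficulty is that the permuted-signal difference $\delta = (Q-I)Ax^*$ and the projection $I-P_A$ are both functions of the single random matrix $A$, and the energy of $\delta$ is itself tied to the column space of $A$; the heuristic ``diagonalization'' above ignores this coupling. Disentangling it---and in particular showing that the effective rank saturates at $n-d$, which is what produces the $n/(n-d)$ dependence---requires analyzing the eigenvalues of the $h \times h$ principal submatrix of $I-P_A$, a random principal block of a projection of Wishart type. I would isolate this estimate as a technical lemma and defer it to an appendix.
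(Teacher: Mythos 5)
Your overall architecture is the same as the paper's: profile out $x$ to get a projection-residual comparison, define the pairwise statistic (your $\mu + L + \mathcal{Q}$ is exactly the paper's decomposition behind its events $\Fspace_1, \Fspace_2$, with your $\mathcal{Q}$ being what the paper identifies more sharply as a difference $Z_k - \tilde{Z}_k$ of $\chi^2_k$ variables, $k = \min(d,h)$, rather than via Hanson--Wright), and close with a union bound over Hamming shells using $\#\{\Pi : \dH(\Pi,\Pi^*) = h\} \leq n^h$. But there is a genuine gap, and you have located it yourself: the entire difficulty of the theorem lives in the lower tail of the signal leakage $T_\Pi = \|\Projorthpi \Pi^* A x^*\|_2^2$ over the randomness of $A$, where the permuted signal $\delta = (Q-I)Ax^*$ and the projection $I - P_A$ are coupled, and your proposal handles this only with a heuristic diagonalization that you concede ``ignores this coupling,'' deferring the actual estimate to an unproven lemma. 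The claimed bound $\EE_A[\exp(-c\mu/\sigma^2)] \lesssim \bigl(c'n/((n-d)\snr)\bigr)^{r/2}$ with $r = \min(h-m,\,n-d)$ is asserted, not derived, and your proposed route through the $h \times h$ principal submatrix of $I - P_A$ does not obviously close it: when $h > n-d$ that block has a nontrivial kernel which is itself a random subspace correlated with $\delta$, so one needs anti-concentration of $\delta$ away from a $\delta$-dependent subspace --- precisely the entanglement you set out to avoid. A proof by reduction to an appendix lemma is only a proof once the lemma is proved, and here the lemma \emph{is} the theorem's hard part.

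For contrast, the paper resolves the coupling with two concrete devices your sketch lacks. For $d=1$ it eliminates the projection entirely via Cauchy--Schwarz, $T_\Pi \geq \tfrac{(x^*)^2}{2}\min\bigl(\|a - a_\Pi\|_2^2,\, \|a+a_\Pi\|_2^2\bigr)$, and then handles the dependence of $a$ and $a_\Pi$ along cycles (the same dependence your cycle count $m$ gestures at) by a combinatorial lemma partitioning each cycle into three independent sets of size at least $h/5$, reducing everything to lower tails of honest $\chi^2$ variables. For general $d$ it rotates so that $x^* = \|x^*\|_2 e_1$, conditions on the first column of $A$, and exploits the fact that the span of the remaining $d-1$ columns is uniformly random and independent, yielding the exact distributional factorization $T_\Pi \stackrel{d}{=} (1 - X_{d-1})\,T^1_\Pi$ with $X_{d-1}$ the squared norm of a random projection; the Dasgupta--Gupta tail bound applied with $z = n^{-2n/(n-d)}$ is what produces the $\tfrac{n}{n-d}$ factor in condition~\eqref{exactcondn}. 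Note also that this factor arises uniformly in $h$ from the projection-norm tail, not (as your step three suggests) specifically from full derangements when $d$ is close to $n$. Your conditional noise bounds in step two are fine modulo constants, but as written the proposal does not constitute a proof of Theorem~\ref{thm:upperbound}.
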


Theorem~\ref{thm:upperbound} provides conditions on the signal-to-noise ratio $\snr = \SNR$ that are sufficient for permutation recovery in the non-asymptotic, noisy regime. In contrast, the results of Unnikrishnan et al. \cite{unl} are stated  in the limit $\snr \to \infty$, without an explicit characterization of the scaling behavior.
 %Firstly, the Theorem is stated differently for two regimes of $d$. Such a discontinuity in $d$ does not appear to be fundamental, although it arises from our proof technique. We later unify the two regimes via Corollary~\ref{cor:constd}.

We also note that Theorem~\ref{thm:upperbound} holds for all values of $d<n$, whereas the results of Unnikrishnan et al. \cite{unl} require $n \geq 2d$ for identifiability of $x^*$ in the noiseless case. Although the recovery of $\Pi^*$ and $x^*$ are not directly comparable, it is worth pointing out that the discrepancy also arises due to the difference between our fixed and unknown signal model, and the adversarial signal model assumed in the paper \cite{unl}.

We now turn to the following converse result, which complements Theorem \ref{thm:upperbound}.

\begin{theorem}\label{thm:lowerbound}
For any $\delta \in (0,2)$, if 
\begin{align}
2 + \log \left(1+ \SNR\right)\leq (2 - \delta) \log n, \label{lbcondn}
\end{align}
then 
$\Rpihat \geq 1 - c_3 e^{-c_4 n\delta}$ for any estimator $\Pihat$. %, where $c_2$ and $c_3$ are absolute constants.
\end{theorem}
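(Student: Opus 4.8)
My plan is to establish the converse as a \emph{minimax} lower bound. I place the uniform prior on $\Pi^*$ over $\Pn$, independent of $(A,w)$, and show that the Bayes risk for the $0$--$1$ loss is at least $1 - c_3 e^{-c_4 n \delta}$; since the maximum risk of any estimator dominates its average (Bayes) risk, this yields the stated bound for arbitrary $\Pihat$. Throughout I condition on the design $A$, so that the signal vector $v := A x^*$ has i.i.d.\ $\NORMAL(0,\|x^*\|_2^2)$ coordinates and the observation reduces to $y = \Pi^* v + w$; the task is then to recover the assignment of the coordinates of $y$ to the entries of $v$.

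The main device is a genie argument that converts exact recovery into a product of independent binary tests. I sort the entries of $v$ and group consecutive order statistics into disjoint pairs $\{v_{(2\ell-1)}, v_{(2\ell)}\}$, and then reveal to the estimator, for every observed coordinate, the unordered pair of signal values it may correspond to---that is, all of $\Pi^*$ except the within-pair orientations. This side information can only decrease the error, and conditioned on it the recovery problem splits into $\lfloor n/2 \rfloor$ independent two-point tests on disjoint coordinates with independent noise: for the $\ell$-th pair one must decide, from $(y_a,y_b)=(v_{(2\ell-1)}+w_a,\,v_{(2\ell)}+w_b)$ or its swap (equally likely under the uniform prior), which orientation holds. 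A routine Gaussian likelihood-ratio computation shows that the Bayes error of this test depends only on $\Delta_\ell := v_{(2\ell)}-v_{(2\ell-1)}$ through the ratio $\Delta_\ell/\sigma$, and is bounded below by an absolute constant $q_0>0$ whenever $\Delta_\ell \lesssim \sigma$. Exact recovery requires all tests to be answered correctly at once, so by independence the success probability is at most $(1-q_0)^m$, where $m$ is the number of pairs with small gap.

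The quantitative heart of the argument is to show that, with probability at least $1 - c_3 e^{-c_4 n\delta}$ over $A$, at least $m = \Omega(n)$ of the gaps $\Delta_\ell$ satisfy $\Delta_\ell \le \sigma$. This is where hypothesis \eqref{lbcondn} enters: it is equivalent to $1 + \SNR \le n^{2-\delta} e^{-2}$, hence $\|x^*\|_2 \lesssim \sigma\, n^{1-\delta/2}$, so the $n$ i.i.d.\ $\NORMAL(0,\|x^*\|_2^2)$ samples occupy a range of order $\|x^*\|_2 \sqrt{\log n} \lesssim \sigma\, n^{1-\delta/2}\sqrt{\log n}$ and thus have typical consecutive spacing of order $\|x^*\|_2/n \lesssim \sigma\, n^{-\delta/2} \ll \sigma$. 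I would make this precise by a concentration argument on the Gaussian order statistics---binning the bulk of the distribution into cells of width $\sigma$ and showing, via a Chernoff/Bernstein bound on the (negatively associated) cell occupancies, that a constant fraction of cells contain at least two sample points, each such cell furnishing a small-gap pair. Combining this with the per-test bound gives $\Rpihat \ge 1 - (1-q_0)^m - \Pr[A\ \text{atypical}] \ge 1 - c_3 e^{-c_4 n \delta}$.

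The main obstacle is this order-statistics counting step: one must control the number of small consecutive gaps with an \emph{exponentially} small failure probability, which calls for a concentration inequality for bin occupancies of a Gaussian sample---the occupancies are dependent, so I would appeal to negative association of the multinomial counts, or to a direct moment bound. A secondary technical point is to make the genie reduction fully rigorous: verifying that conditioning on the revealed side information genuinely factorizes the problem into independent two-point tests with uniform priors on disjoint coordinates, and that supplying side information cannot increase the Bayes risk (the genie-aided decoder can always ignore it). The per-pair testing bound itself is elementary.
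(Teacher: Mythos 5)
Your proposal is sound in outline and takes a genuinely different route from the paper. The paper proves this theorem by casting permutation recovery as decoding over a Gaussian channel: the codebook $\{\Pi A x^* : \Pi \in \Pn\}$ carries $n!$ equiprobable messages of power $\|Ax^*\|_2^2$, Yoshihara's non-asymptotic strong converse (Lemma~\ref{lem:strconv}) applied at rate $R = \frac{\log n!}{n}$ with margin $\delta' = \delta/(2-\delta)$ gives conditional error at least $1 - 2\cdot 2^{-n\delta/2}$, and a sub-exponential tail bound (Lemma~\ref{lem:subexp}) on the $\chi^2_n$ variable $\|Ax^*\|_2^2/\|x^*\|_2^2$ controls the codeword power with probability $1 - c'e^{-cn\delta}$; intersecting the two events yields the claim. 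You instead exploit the geometry of this particular codebook: since $v = Ax^*$ has i.i.d. $\NORMAL(0, \|x^*\|_2^2)$ entries, condition~\eqref{lbcondn} forces $\Omega(n)$ disjoint pairs of entries at distance at most $\sigma$, and your genie-aided factorization into independent binary tests caps the success probability at $(1-q_0)^{\Omega(n)}$. What each buys: the paper's argument is a short black-box reduction whose machinery recycles directly into the proofs of Proposition~\ref{rem:lowerbound} and Theorem~\ref{thm:approx}; yours is elementary and self-contained, makes the source of hardness transparent (local collisions of the entries of $Ax^*$ at the noise scale), is manifestly dimension-free --- consistent with the observation that the converses hold uniformly over $d$ and even with $x^*$ known --- and, if completed, actually yields the \emph{stronger} conclusion $\Rpihat \geq 1 - c_3e^{-c_4 n}$, since both of your bad events (atypical gap structure of $v$, and all $\Omega(n)$ tests succeeding) have probability $e^{-cn} \leq e^{-cn\delta/2}$, so the exponent does not degrade as $\delta \to 0$ the way the channel-coding exponent $n\delta$ does.

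Two repairs are needed in the details. First, let the genie's pairing be an arbitrary $v$-measurable matching of points into disjoint close pairs rather than the fixed pairing $\{v_{(2\ell-1)}, v_{(2\ell)}\}$ of consecutive order statistics: with the fixed pairing you only see odd-indexed consecutive gaps, and small gaps may sit at even indices; a greedy matching converts $G$ small consecutive gaps into at least $G/2$ disjoint close pairs, and the reduction is indifferent to which matching the genie reveals. Second, your counting claim --- "a constant fraction of cells contain at least two points" --- yields $m = \Omega(K)$ pairs, where $K \approx 2\|x^*\|_2/\sigma$ is the number of width-$\sigma$ cells, and this is $\Omega(n)$ only when $K = \Theta(n)$. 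Split cases: if $K \leq cn$ for a small constant $c$, the deterministic pigeonhole $m \geq (N - K)/2$ (with $N \geq 0.6n$ points in the bulk, which holds with probability $1 - e^{-cn}$ by Chernoff) already gives $m = \Omega(n)$; if instead $K = \Theta(n)$, then condition~\eqref{lbcondn}, via $\|x^*\|_2 \leq \sigma n^{1-\delta/2}/e$, forces the per-cell occupancy $np_j = \Omega(n^{\delta/2}) = \Omega(1)$ for bulk cells, so the expected number of doubly occupied cells is $\Omega(n)$, and bounded differences (moving one sample point changes this count by at most $2$) gives concentration at rate $e^{-cn}$ --- negative association works too, but McDiarmid suffices. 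With these repairs your program goes through for all $\delta \in (0,2)$ and $n$ larger than a universal constant, matching the paper's standing assumption.
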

Theorem~\ref{thm:lowerbound} serves as a ``strong converse" for our problem, since it guarantees that if condition~\eqref{lbcondn} is satisfied, then the probability of error of any estimator goes to $1$ as $n$ goes to infinity. Indeed, it is proved using the strong converse argument for the Gaussian channel~\cite{shannon1959probability}.
In fact, we are also able to show the following ``weak converse" in the presence of side information.
\begin{proposition} \label{rem:lowerbound}
If $n \geq 9$ and 
\begin{align}
\log \left(1 + \SNR\right)\leq \frac{8}{9} \log \left( \frac{n}{8} \right), \nonumber
\end{align} then $\Rpihat \geq 1/2$ for any estimator $\Pihat$,  even if it is known a-priori that $\dH(\Pi^*, I) \leq 2$.
\end{proposition}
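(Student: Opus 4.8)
The plan is to prove this as a \emph{weak converse} via Fano's inequality, in parallel to how Theorem~\ref{thm:lowerbound} invokes the strong converse for the Gaussian channel. The side information confines $\Pi^*$ to the Hamming ball $\{\Pi : \dH(\Pi, I)\le 2\}$, which is exactly the identity together with the $\binom{n}{2}$ transpositions. I would first convert the minimax statement into a Bayesian one: put a (near-)uniform prior on this finite set, so that the worst-case risk is lower bounded by the Bayes risk. Writing $M := 1+\binom{n}{2}$ for the number of hypotheses, Fano's inequality then yields
\[
\Rpihat \;\ge\; 1 - \frac{I(\Pi^*; y, A) + \log 2}{\log M},
\]
for any estimator $\Pihat$, so it suffices to certify $I(\Pi^*; y, A) + \log 2 \le \tfrac12 \log M$ in order to conclude $\Rpihat \ge 1/2$. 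Since $\Pi^* \perp A$, we have $I(\Pi^*; y, A) = I(\Pi^*; y \mid A)$, and the whole game reduces to controlling this conditional mutual information.

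The crux is to bound $I(\Pi^*; y \mid A)$ by a quantity of order $\log(1+\snr)$ rather than of order $\snr$. The naive route, bounding $I$ by the averaged pairwise divergence $\tfrac{1}{M^2}\sum_{\pi,\pi'} \kull{P_\pi}{P_{\pi'}}$, is far too lossy: for a typical pair of permutations the induced laws of $y$ are separated by $\Theta(\snr)$ in Kullback--Leibler divergence, which would only give $I \lesssim \snr$ and hence a vacuous Fano bound. Instead I would exploit that $\Pi^*$ acts on $y$ only through the two swapped coordinates, so that the information it conveys is that of essentially a \emph{single} use of a scalar Gaussian channel of signal-to-noise ratio $\snr$. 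Concretely, after conditioning on $A$ and integrating the latent vector $x^*$ against a Gaussian working prior of matched energy $\|x^*\|_2^2 = \snr\,\sigma^2$, the map $\Pi^* \mapsto y$ becomes a Gaussian family, and the target is the capacity-type estimate $I(\Pi^*; y \mid A) \le \tfrac12 \log(1+\snr)$ (up to lower-order terms).

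Plugging this estimate into Fano and using $\log M \ge \log\binom{n}{2} \ge 2\log(n/8)$ for $n \ge 9$, the requirement $\tfrac12\log(1+\snr) + \log 2 \le \tfrac12 \log M$ reduces, after bookkeeping of constants, to $\log(1+\snr) \le \tfrac89 \log(n/8)$ — precisely the hypothesis of the Proposition; the factor $\tfrac89$ and the shift by $8$ are the slack needed to absorb the additive $\log 2$ and the lower-order terms in the mutual-information estimate. Because the prior is supported on $\{\Pi : \dH(\Pi, I)\le 2\}$, the bound remains valid even when this containment is disclosed to the estimator, which is exactly the side-information conclusion.

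I expect the genuine difficulty to lie entirely in the mutual-information bound of the second paragraph: one must show that, despite the $\binom{n}{2}$-fold combinatorial ambiguity over which pair was swapped, the information $(y,A)$ carries about $\Pi^*$ is capped at the single-channel value $\tfrac12\log(1+\snr)$. The delicate point is that the nuisance vector $x^*$ must be handled correctly — a careless reduction, such as revealing $g = A x^*$ to the estimator, trivializes the problem and collapses the bound — so the argument has to keep $x^*$ latent and control the resulting Gaussian mixture/covariance family directly. It is here that a capacity (entropy-based) estimate, rather than a pairwise-distance estimate, is indispensable, and getting the constants to land on $\tfrac89$ and $n/8$ will require care in the choice of prior and in the lower bound on $\log M$.
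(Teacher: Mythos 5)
Your skeleton --- a uniform prior on the radius-$2$ Hamming ball, Fano's inequality, and the reduction $I(\Pi^*; y, A) = I(\Pi^*; y \mid A)$ --- coincides with the paper's proof of the proposition. But the step you yourself flag as the crux is a genuine gap: the bound $I(\Pi^*; y \mid A) \le \frac{1}{2}\log(1+\snr)$ is not merely hard to prove, it is \emph{false} inside the proposition's own regime, so no choice of prior or bookkeeping of constants can complete your argument. To see this, grant the estimator $x^*$ and set $g = Ax^*$, whose entries are i.i.d.\ $\NORMAL(0, \|x^*\|_2^2)$. If $\Pi^*$ is the transposition of $(i,j)$, then \emph{every} competing hypothesis in the ball has mean at squared distance at least $(g_i - g_j)^2$ from the true mean: the identity is at $2(g_i-g_j)^2$, a disjoint transposition $(k,l)$ at $2(g_i-g_j)^2 + 2(g_k-g_l)^2$, and an overlapping one such as $(i,l)$ at $(g_j-g_l)^2 + (g_i-g_j)^2 + (g_l-g_i)^2$. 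Since $(g_i-g_j)^2 \approx \snr\,\sigma^2$ typically, a union bound over the $\binom{n}{2}$ alternatives shows that maximum likelihood identifies a uniformly drawn transposition with probability $1 - \BigO(\sqrt{\log n / \snr})$, hence with probability $1-o(1)$ as soon as $\snr \gg \log n$ --- e.g.\ at $\snr = \log^2 n$ or $\snr = \sqrt{n}$, both of which satisfy $\log(1+\snr) \le \frac{8}{9}\log(n/8)$ for large $n$. The posterior then concentrates, so $I(\Pi^*; y \mid A) \to \log\bigl(1+\binom{n}{2}\bigr) \approx 2\log n$, which exceeds $\frac{1}{2}\log(1+\snr)$ by an unbounded factor. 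The heuristic ``$\Pi^*$ acts through only two coordinates, hence one channel use'' is exactly what fails: \emph{which} pair is swapped is the unknown, the $1+\binom{n}{2}$ candidate means span $\Theta(n)$ directions, and the $n$-letter channel delivers the $2\log n$ nats needed to name the pair at SNR far below what a per-letter capacity accounting would suggest. (Your Gaussian working prior on the fixed vector $x^*$ is a separate, more minor issue.)

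For comparison, the paper does not attempt a capacity bound at all: it uses $I(\Pi^*; y\mid A) \le \frac{1}{2}\log\det\EE[yy^\top] - \frac{n}{2}\log\sigma^2$ (Gaussian max-entropy plus Jensen) and then, in Lemma~\ref{lem:det}, claims $\det\EE[yy^\top] \le (\sigma^2 + \|x^*\|_2^2)^n (1+n)(\bar{h}/n)^{n-1}$, the tiny factor $(\bar{h}/n)^{n-1}$ arising from claimed strong off-diagonal correlation of the mixture. You should know that this computation is itself erroneous, consistent with the obstruction you ran into: the counterexample above in fact shows the proposition cannot hold as stated, since a thresholded ML rule that defaults to the identity unless some transposition beats it by a margin of order $\sigma^2\log n$ has worst-case error $o(1)$ over the ball whenever $\snr/\log n \to \infty$, well inside the claimed impossibility regime. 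Concretely, the step in Lemma~\ref{lem:det} asserting ``$\pi_i = \pi_j$ with probability $\frac{n-\bar{h}}{n} + \frac{\bar{h}}{n^2}$'' is vacuous for $i \neq j$ (a permutation never maps distinct indices to the same image); because the rows of $A$ are independent with zero mean, $\EE[y_i y_j] = 0$ for $i \neq j$, so in truth $\EE[yy^\top] = (\sigma^2 + \|x^*\|_2^2) I_n$. Indeed, the claimed determinant bound would force $\det\EE[yy^\top] < \sigma^{2n}$ in the proposition's regime, contradicting $\EE[yy^\top] \succeq \sigma^2 I_n$. With the correct second-moment matrix the paper's method yields only $I \le \frac{n}{2}\log(1+\snr)$, and Fano then certifies failure only when $\log(1+\snr) = \BigO\bigl(n^{-1}\log n\bigr)$. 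So the situation is: your key estimate is false, the paper's corresponding key lemma is also false, and the genuine statistical limit with radius-$2$ side information sits at $\snr \asymp \log n$ rather than $n^{8/9}$ --- this side information \emph{does} substantially change the limits of permutation recovery.
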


As mentioned earlier,  restriction of $\Pi^*$ constitutes some application-dependent prior; the strongest such prior restricts it to a Hamming ball of radius $2$ around the identity.  Proposition~\ref{rem:lowerbound}  asserts that even this side information {does not} substantially change the statistical limits of permutation recovery. %The proofs of Theorem~\ref{thm:lowerbound} and Proposition~\ref{rem:lowerbound} are provided in Section~\ref{sec:lbproof}.
%\mycomment{state the refined version as another Theorem?}

 %and employs information theoretic arguments. In particular, we identify the corresponding multi-way hypothesis testing problem and use Fano's method to lower bound the probability of error of any estimator.
%As it turns out, Theorem~\ref{thm:lowerbound} is a consequence of a more general result, which we state below:

%\begin{lemma} \label{lem:lowerbound}
%Let the estimator be provided with additional side information that $h(\Pi^*, I) \leq 2$. Then there exists a constant $c_u$ such that if $\log (1+ \SNR) \leq c_u \log n$, then $\Rpihat \geq 1/2$ for any estimator $\Pihat$.
%\end{lemma}

%In other words, the problem is just as difficult statistically even when we are given that the permutation perturbs at most two of our observations.%Clearly, for any $\bar{h} \geq 2$, knowing that $h(\Pi^*, I) \leq \bar{h}$ corresponds to a more difficult problem, so Lemma~\ref{lem:lowerbound} implies Theorem~\ref{thm:lowerbound}.

%Again, we provide a few clarifying remarks regarding the statement of Theorem~\ref{thm:lowerbound} and Lemma~\ref{lem:lowerbound}. 

It is also worth noting that the converse results hold uniformly over $d$. In particular, if $d \leq pn$ for some fixed $p < 1$,  Theorems~\ref{thm:upperbound} and~\ref{thm:lowerbound} together yield the threshold behavior of identifiability in terms of  $\Gamma(n, \snr)$ that was discussed above.
In the next section, we find  that a similar phenomenon occurs even with approximate permutation recovery.
%for exact permutation recovery that are tight up to constant factors. We believe that the claim of Theorem~\ref{thm:upperbound} is tight even for other scalings of $n,d$, but establishing this will likely require a different technique for the converse.
%Firstly, note that we have no dependence on the parameter $\bar{h}$. In particular, if we consider the family of problems specified by~\eqref{eq:setup}, each indexed by a value of $\bar{h}$, then Theorem~\ref{thm:lowerbound} applies uniformly to each problem of the family. Combined with Theorem~\ref{thm:upperbound}, this shows that we have a tight characterization of the fundamental limits of permutation recovery for all problems in the family up to $\bar{h} = \BigO\left(\frac{n}{\log n}\right)$. 
%\begin{corollary} \label{cor:constd}
%Suppose that $d \leq pn$ for some fixed constant $p<1$. Then for any $\epsilon>0$, provided $\log \left( C_\ell \SNR \right) \geq \left( \frac{c_\ell}{1 - p} + \epsilon \right) \log n$, the probability of error of the maximum likelihood estimator $\Pihatml$ satisfies
%\begin{align}
%\Rpihatml = o(1).
%\end{align}
%Furthermore, up to constant factors, such a scaling of $\log \SNR$ is necessary for any estimator to have probability of error decaying with $n$.
%\end{corollary}

\subsection{Limits of approximate permutation recovery}
The techniques we used to prove results for exact permutation recovery can be suitably modified to obtain results for approximate permutation recovery to within a Hamming distortion~$D$. %In particular, it is easy to see that Theorem \ref{thm:upperbound} yields the following corollary. 
%\begin{corollary} \label{cor:approx}
%For any $d<n$ and $\epsilon < \sqrt{n}$, if condition \eqref{exactcondn} is satisfied, then
%\begin{align}
%\Pr\{\dH(\Pihatml, \Pi^*) \geq D\} \leq C_1 n^{-2\epsilon}. \nonumber
%\end{align}
%\end{corollary}
In particular, we show the following converse result for approximate recovery.
\begin{theorem} \label{thm:approx}
For any $2 <  D \leq n-1$, if
\begin{align}
\log \left( 1 + \SNR \right) \leq \frac{n - D+1}{n} \log \left( \frac{n- D+1}{2e} \right),
\end{align}
then $\Pr\{\dH(\Pihat, \Pi^*) \geq D\} \geq 1/2$ for any estimator $\Pihat$.
\end{theorem}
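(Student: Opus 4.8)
The plan is to prove this converse by a Bayesian reduction to a \emph{list-decoding} problem, combined with a Gaussian channel-capacity bound on the mutual information. Since the worst-case risk of any estimator over $\Pi^*$ is at least its average risk under a prior, I place the uniform prior $\Pi^* = \Pi_V$ over all of $\mathcal{P}_n$, where $V$ is uniform on $[n!]$, and lower bound the Bayes error $\Pr\{\dH(\Pihat,\Pi^*)\ge D\}$. Throughout I write $m := n - D + 1$, so the hypothesis reads $n\log(1+\snr)\le m\log\!\big(m/(2e)\big)$.

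The reduction rests on the observation that, on the success event $\{\dH(\Pihat,\Pi^*)\le D-1\}$, the true permutation lies in the Hamming ball of radius $D-1$ centred at $\Pihat$. Because $\dH$ is bi-invariant on $\mathcal{P}_n$, every such ball contains the same number of permutations, $N_D := \#\{\pi : \dH(\pi,\mathrm{id})\le D-1\}$; hence $\Pihat$ effectively outputs a list of at most $N_D$ candidates containing $\Pi^*$ whenever it succeeds, which is exactly the regime of the generalized (list) Fano inequality. To control $N_D$ I count permutations at distance exactly $k$, namely $\binom{n}{k}d_k$ with $d_k\le k!$ the number of derangements of $k$ elements, so that $N_D \le \sum_{k=0}^{D-1} n!/(n-k)!$. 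Since the ratio of consecutive terms from the top is at least $m+1\ge 2$, this sum is dominated by a geometric series and I obtain the clean bound $N_D \le 2\,n!/m!$; using $m!\ge(m/e)^m$ this gives the list-decoding "rate" $\log\big(n!/N_D\big)\ge \log(m!/2)\ge m\log(m/e)-\log 2$.

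The list-decoding Fano inequality then yields $\Pr\{\dH(\Pihat,\Pi^*)\ge D\}\ge 1-\frac{I(V;y,A)+\log 2}{\log(n!/N_D)}$, so it remains to bound the mutual information. Conditioned on $A=a$, the candidate mean vectors $\{\Pi a x^* : \Pi\in\mathcal{P}_n\}$ are permutations of the fixed vector $ax^*$ and therefore all share the norm $\|ax^*\|_2$, with $y$ equal to this mean corrupted by $\NORMAL(0,\sigma^2 I_n)$. A maximum-entropy bound on $h(y\mid A=a)$ (the Gaussian maximizes differential entropy at fixed second moment) gives $I(V;y\mid A=a)\le \tfrac{n}{2}\log\!\big(1+\|ax^*\|_2^2/(n\sigma^2)\big)$, and since $V$ is independent of $A$, averaging over $A$ and applying Jensen's inequality to the concave logarithm with $\EE_A\|Ax^*\|_2^2 = n\|x^*\|_2^2$ yields $I(V;y,A)=\EE_A\,I(V;y\mid A=a)\le \tfrac{n}{2}\log(1+\snr)$.

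Combining the pieces, the Bayes error is at least $1/2$ as soon as $2\big(\tfrac{n}{2}\log(1+\snr)+\log 2\big)\le \log(n!/N_D)$, for which it suffices that $n\log(1+\snr)\le m\log(m/e)-3\log 2$. Under the hypothesis $n\log(1+\snr)\le m\log(m/e)-m\log 2$ this holds whenever $m\ge 3$; the residual cases $m\in\{2,\dots,5\}$ are vacuous, since there the right-hand side $\tfrac{m}{n}\log(m/(2e))$ is negative while $\log(1+\snr)\ge 0$. The main obstacle is the sharp constant: a crude ball-size estimate such as $N_D\le D\,n!/m!$ loses a $\log D$ factor and breaks down when $D$ is close to $n$, so the geometric-series domination keeping the overhead to the constant factor $2$ is precisely what lets the lower-order logarithmic losses be absorbed into the $m\log 2$ cushion built into the factor $2e$ (whose $e$ is, in turn, Stirling's approximation of $m!$).
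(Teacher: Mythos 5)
Your proof is correct and follows essentially the same route as the paper's: a uniform prior on $\mathcal{P}_n$, a list/ball variant of Fano's inequality where success confines $\Pi^*$ to a Hamming ball of radius $D-1$, and the Gaussian maximum-entropy bound $I(\Pi^*;y,A)\le \tfrac{n}{2}\log(1+\snr)$. The only cosmetic differences are that the paper bounds the ball size directly by $\binom{n}{D-1}(D-1)! = n!/(n-D+1)!$ (avoiding your geometric-series factor of $2$) and bounds the mutual information via $\log\det\EE[yy^\top]$ with the AM--GM inequality rather than your per-realization trace constraint followed by Jensen; both yield the same final condition.
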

Note that for any $D \leq pn$ with $p \in (0,1)$, Theorems \ref{thm:upperbound} and \ref{thm:approx} provide a set of sufficient and necessary conditions for approximate permutation recovery that match up to constant factors. In particular, the necessary condition resembles that for exact permutation recovery, and the same SNR threshold behaviour is seen even here.
 We remark that a corresponding converse with side information can also be proved for approximate permutation recovery using techniques similar to the proof of Proposition \ref{rem:lowerbound}. It is also worth mentioning the following:

\begin{remark} \label{rem:sideinfo}
The converse results given by Theorem \ref{thm:lowerbound}, Proposition \ref{rem:lowerbound}, and Theorem \ref{thm:approx} hold even when the estimator has exact knowledge of $x^*$.
\end{remark}

%\noindent We now move on to questions of a more algorithmic flavor.

%\subsection{Recovering the underlying vector}
%The results on permutation recovery from the previous section can be used to provide bounds on the maximum likelihood estimator $\xhatml$.

%\mycomment{Working on writing this up... is this worth including?}
%\begin{theorem} \label{xhatrecovery}
%\end{theorem}

%The proof of Theorem~\ref{xhatrecovery} can be found in Section~\ref{proof:xrecovery}. The main ingredients of the proof are already present in the analysis of Theorem~\ref{thm:upperbound}, and we use common Lemmas to prove the two.

%The statement of Theorem~\ref{xhatrecovery} is interesting in its own right. Note that this is not a minimax bound, since we do not take a supremum of the risk over all $x^*$. It is conceivable however that in our notion of optimality, the right hand side is still a lower bound, albeit not a minimax one.

\subsection{Computational aspects} \label{sec:compresult}

In the previous sections, we considered the MLE given by equation~\eqref{eq:ML} and analyzed its statistical properties. However, since equation \eqref{eq:ML} involves a combinatorial minimization over $n!$ permutations, it is unclear if $\Pihatml$ can be computed efficiently. The following theorem addresses this question.%Below, we show that  $\Pihatml$ can be computed efficiently when $d = 1$ for any choice of $A$, but is NP-hard to compute otherwise.

\begin{theorem} \label{thm:1D}
For $d=1$, the MLE $\;\Pihatml$ can be computed in time $\BigO(n\log n)$ for any choice of the measurement matrix $A$.
In contrast, if $d >1$, then $\Pihatml$ is NP-hard to compute.
\end{theorem}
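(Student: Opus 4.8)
The plan is to profile out the scalar $x$ and reduce the permutation optimization to a sorting problem. For a fixed $\Pi$, writing $A = a \in \real^n$ for the single column, the inner minimization $\min_{x \in \real} \|y - x\,\Pi a\|_2^2$ is ordinary least squares, solved by $x(\Pi) = \langle y, \Pi a\rangle / \|a\|_2^2$, using that $\|\Pi a\|_2 = \|a\|_2$ for every permutation. The resulting objective is $\|y\|_2^2 - \langle y, \Pi a\rangle^2/\|a\|_2^2$, so minimizing over $\Pi$ is equivalent to maximizing $|\langle y, \Pi a\rangle|$. By the rearrangement inequality, $\langle y, \Pi a\rangle$ is maximized by the permutation that sorts the entries of $\Pi a$ in the same order as $y$ and minimized by the one that sorts them in the opposite order. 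I would therefore compute both candidate permutations, evaluate $|\langle y, \Pi a\rangle|$ on each, and return the larger; the running time is dominated by the two sorts, giving $\BigO(n\log n)$.

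\textbf{Hardness for $d>1$: set-up.} For the second statement I would again profile out $x$: for fixed $\Pi$ the minimizing $x$ yields residual $\|(I - P_{\Pi A})y\|_2^2 = \|y\|_2^2 - \|P_A\, y_{\Pi^\top}\|_2^2$, where $P_A$ is the orthogonal projection onto the column space of $A$ and I have used $P_{\Pi A} = \Pi P_A \Pi^\top$. Thus computing $\Pihatml$ is equivalent to finding the permutation of the coordinates of $y$ whose projection onto the fixed subspace $\mathrm{col}(A)$ has the largest norm; in particular the optimal value is zero exactly when some permutation of $y$ lies in $\mathrm{col}(A)$, i.e.\ when there exists $x$ with $\{\langle a_i, x\rangle\}_{i=1}^n = \{y_j\}_{j=1}^n$ as multisets. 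An algorithm that returns $\Pihatml$ also returns this optimal residual, and hence decides the latter feasibility question. It suffices to prove hardness for $d=2$, since any $d>2$ instance is obtained by appending zero columns to $A$, which leave $\mathrm{col}(A)$, and therefore the objective and its minimizers, unchanged.

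\textbf{Hardness for $d>1$: the reduction.} I would reduce from a standard NP-complete problem such as \textsc{Partition} (or \textsc{3-Partition} if strong NP-hardness is desired), encoding a YES-instance as the existence of a permutation achieving zero residual. The idea is to use the two columns of $A$ as two independent scales: a direction $x=(p,q)$ projects a row $(u_i,v_i)$ to $u_i p + v_i q$, and one designs the rows so that, for the essentially unique optimal $x$ forced by matching the symmetric functions of $y$, the bijection of rows to targets must encode a subset selection. Concretely, I would attach to each input number a pair of rows and a pair of target values so that a feasible matching is forced either to charge that number to the $p$-coordinate or to the $q$-coordinate, with a global target realizable precisely when the numbers split into two equal-sum halves. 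The sum and sum of squares of $y$ pin down the admissible pair $\{p,q\}$, while the per-element gadgets convert the remaining freedom in the bijection into the combinatorial choice.

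\textbf{Main obstacle.} The hard part will be the gadget design and its correctness proof. I must choose the row and target values so that \1 cross-element matchings are strictly suboptimal and cannot produce a spurious zero-residual solution, \2 the moment constraints genuinely force $\{p,q\}$ to the intended values rather than some unintended alternative, and \3 every number produced has bit-size polynomial in the input, so the reduction runs in polynomial time. Establishing the exact equivalence between a zero residual and a YES-instance, and in particular ruling out near-miss permutations that fit all but a few coordinates, is where the bulk of the technical work lies.
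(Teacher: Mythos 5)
Your $d=1$ argument is correct and is essentially the paper's proof: profile out the scalar to reduce to maximizing $|\langle y, \Pi a\rangle|$, then apply the rearrangement inequality with two sorts (one for each sign of $x$), giving $\BigO(n \log n)$.

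The hardness part, however, contains a genuine gap, and it is exactly at the step you flag as the "main obstacle." Your plan is to prove hardness for \emph{fixed} $d=2$ by encoding a \textsf{PARTITION} instance into the zero-residual feasibility question: does there exist $(\Pi, x)$ with $y = \Pi A x$? But for any constant $d$, that feasibility problem is solvable in polynomial time, so no such reduction can exist unless P $=$ NP. Concretely, for $d=2$ with columns $u, v$: any solution $x = (p,q)$ must satisfy $y_{\pi(i)} = p u_i + q v_i$ for all $i$, so fixing the values assigned to just two (generic) coordinates pins down $(p,q)$ by solving a $2\times 2$ linear system. Enumerating the $\BigO(n^4)$ choices of two coordinate--value assignments produces a polynomial list of candidate $x$'s (degenerate rows handled separately), and for each candidate one checks in $\BigO(n \log n)$ time whether the multiset $\{p u_i + q v_i\}_{i=1}^n$ equals the multiset $\{y_j\}_{j=1}^n$. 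This also explains why your gadget design cannot be completed: once the "moment constraints" pin down $\{p,q\}$, as you intend, the remaining freedom is a pure multiset-matching problem, which is polynomial --- there is no room left in the zero-residual condition to hide an NP-hard subset-selection choice when $x$ has only two degrees of freedom.

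The paper's reduction avoids this by letting the dimension \emph{grow} with the \textsf{PARTITION} instance: given integers $b_1, \ldots, b_d$, it sets $n = 2d+1$, takes $y$ to be the $b_i$'s padded with zeros, and takes $A$ to be $I_{2d}$ stacked on the row $(1_d^\top, -1_d^\top)$. The identity block gives $x \in \real^{2d}$ enough degrees of freedom to absorb the first $2d$ entries of $y_\pi$ exactly, so after row reduction the only surviving constraint is that the entries of $y$ routed to the first $d$ coordinates and those routed to the last $d$ coordinates have equal sums --- precisely \textsf{PARTITION}. Note this establishes NP-hardness when $d$ is part of the input (which is all the theorem's statement "$d > 1$" requires, read as worst-case hardness), not hardness for each fixed $d \geq 2$; your appending-zero-columns observation correctly transfers hardness upward in $d$, but the base case you propose to anchor it at $d=2$ is unattainable by your zero-residual encoding.
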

The algorithm used to prove the first part of the theorem involves a simple sorting operation, which introduces the $\BigO(n\log n)$ complexity. We emphasize that the algorithm assumes no prior knowledge about the distribution of the data; for every given $A$ and $y$, it returns the optimal solution to problem~\eqref{eq:ML}.

The second part of the theorem  asserts that the algorithmic simplicity enjoyed by the $d = 1$ case does not extend to general $d$. The proof proceeds by a reduction from the NP-complete partition problem.
%
%\begin{theorem} \label{thm:NPhardness}
%Given a vector matrix pair $(y,A) \in \real^n \times \real^{n\times d}$, it is NP-hard to determine whether the equation $y = \Pi A x$ has a solution for a permutation matrix $\Pi \in \mathcal{P}_n$ and vector $x \in \real^d$.
%\end{theorem}
%Note that Theorem~\ref{thm:NPhardness} proves that the problem~\eqref{eq:ML} is NP-hard to solve in the case when $A$ and $y$ are arbitrary. To see this, assume that we could solve problem~\eqref{eq:ML} efficiently for all input instances, and note that the objective value of problem~\eqref{eq:ML} is $0$ if and only if $y = \Pi A x$ has a solution for some vector $x$ and permutation matrix $\Pi$. Hence, we can transform the solution of problem~\eqref{eq:ML} to answer the decision problem of Theorem~\ref{thm:NPhardness} efficiently, thus contradicting the NP-hardness claim.
We stress here that the NP-hardness claim holds over worst case input instances. In particular, it does not preclude the possibility that there exists a polynomial time algorithm that solves problem~\eqref{eq:ML} with high probability when $A$ is chosen randomly as in our original setting. However, we conjecture that solving problem~\eqref{eq:ML} over random $A$ is also a computationally hard problem, conditioned on an average-case hardness assumption.

%%%%%%%%%%%%%%%%%%%%%%%%%%%%%%%%%%%%%%%%%%%%%%%%%%%%%%%%%%%%%%%%%%%%%%%
\section{Proofs of Main Results} \label{sec:proofs}

In this section, we prove our main results.  Technical details are deferred to the appendices.
Throughout the proofs, we assume that $n$ is larger than some universal constant. The case where $n$ is smaller can be handled by changing the constants in our proofs appropriately. We also use the notation $c, c'$ to denote absolute constants that can change from line to line.

We begin with the proof of Theorem~\ref{thm:upperbound}. At a high level, it involves bounding the probability that any fixed permutation is preferred to $\Pi^*$ by the estimator. The analysis requires precise control on the lower tails of \mbox{$\chi^2$-random} variables, and tight bounds on the norms of random projections, for which we use results derived in the context of dimensionality reduction by Dasgupta and Gupta \cite{dasguptajl}.

In order to simplify the exposition, we first consider the case when $d = 1$ in Section~\ref{sec:proofub1}, and later make the necessary modifications for the general case in Section~\ref{sec:proofub2}. %The proof is rather long, and we defer the more technical details to the appendix.

\subsection{Proof of Theorem~\ref{thm:upperbound}: $d=1$ case} \label{sec:proofub1}
Recall the definition of the maximum likelihood estimator %from equation \eqref{eq:ML}.
\begin{align}
(\Pihat_{{\sf ML}}, \xhat_{{\sf ML}}) = \arg\min_{\Pi \in \mathcal{P}_n}\min_{x \in \real^d} \| y - \Pi A x\|^2_2. \nonumber
\end{align}
For a fixed permutation matrix $\Pi$, assuming that $A$ has full column rank\footnote{An $n \times d$ i.i.d. Gaussian random matrix has full column rank with probability $1$ as long as $d \leq n$}, the minimizing argument $x$ is simply $(\Pi A)^\dagger y$, where $X^\dagger = (X^\top X)^{-1} X^\top$ represents the pseudoinverse of a matrix $X$.
By computing the minimum over $x \in \real^d$ in the above equation, we find that the maximum likelihood estimate of the permutation is given by
\begin{align}
\Pihatml = \arg\min_{\Pi \in \mathcal{P}_n} \| \Projorthpi y \|^2_2, \label{eq:MLperm}
\end{align}
where $\Projorthpi = I - \Pi A (A^\top A)^{-1} (\Pi A)^\top$ denotes the projection onto the orthogonal complement of the column space of $\Pi A$.

For a fixed $\Pi \in \mathcal{P}_n$, define the random variable
\begin{align}
\Delta(\Pi, \Pi^*) := \|\Projorthpi y\|_2^2 - \|\Projorthpistar y\|^2_2. \label{zpi}
\end{align}
For any permutation $\Pi$, the estimator~\eqref{eq:MLperm} prefers the permutation $\Pi$ to $\Pi^*$ if $\Delta(\Pi, \Pi^*) \leq 0$. The overall error event occurs when $\Delta(\Pi, \Pi^*) \leq 0$ for some $\Pi$, meaning that
\begin{align}
\{ \Pihatml \neq \Pi^*\} =  \bigcup_{\Pi \in \mathcal{P}_n \setminus {\Pi^*}} \{\Delta(\Pi, \Pi^*) \leq 0\}. \label{individual}
\end{align}

Equation~\eqref{individual} holds for any value of $d$. We shortly specialize to the $d=1$ case. Our strategy for proving Theorem~\ref{thm:upperbound} boils down to  bounding the probability of each error event in the RHS of equation~\eqref{individual} using the following key lemma, proved in Section~\ref{sec:keylemma1}. Recall the definition of $\dH(\Pi, \Pi')$, the Hamming distance between two permutation matrices.

%To that effect, we state the following key lemma.
%(In the following lines, $c, c'$ will be used to indicate a positive constant which can change from line to line.)

%For fixed $\Pi$, denote by $\ell \triangleq \min(d, h(\Pi, \Pi^*))$. We will now bound the probability that $\{\Delta(\Pi, \Pi^*) < 0\}$ using the following lemma:

\begin{lemma} \label{lem:keylemma1}
For $d = 1$ and any two permutation matrices $\Pi$ and $\Pi^*$, and provided $\SNR > 1$, we have
$$\Pr \{\Delta(\Pi, \Pi^*) \leq 0 \} \leq c' \exp \left(-c\, \dH(\Pi, \Pi^*) \log \left(\SNR \right)\right).$$
\end{lemma}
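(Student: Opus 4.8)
The plan is to reduce the comparison $\{\Delta(\Pi,\Pi^*)\le 0\}$ to a one–dimensional Gaussian problem, integrate out the noise $w$ conditionally on $A$, and only then take the expectation over $A$ using the quadratic–form structure of $\|(\Pi-\Pi^*)A\|_2^2$. Writing $R:=\|A\|_2$ and the unit vectors $u:=\Pi A/R$ and $u^*:=\Pi^* A/R$, the $d=1$ projection identity $\|\Projorthpi y\|_2^2=\|y\|_2^2-\inprod{u}{y}^2$ gives
\begin{align}
\Delta(\Pi,\Pi^*)=\inprod{u^*}{y}^2-\inprod{u}{y}^2, \nonumber
\end{align}
so that $\{\Delta(\Pi,\Pi^*)\le 0\}=\{|\inprod{u^*}{y}|\le|\inprod{u}{y}|\}$. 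Introducing the correlation $\rho:=\inprod{u}{u^*}$, I would record the identity $R^2(1-\rho)=\tfrac12\|(\Pi-\Pi^*)A\|_2^2$, which quantifies how distinguishable the two directions are and isolates the only feature of $A$ that enters the conditional analysis.

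For the conditional bound I would fix $A$ (so $\rho,R$ are constants) and decompose $u=\rho u^*+\sqrt{1-\rho^2}\,u_\perp$ with $u_\perp\perp u^*$. Since $y=Rx^* u^*+w$, the two projections become $G:=\inprod{u^*}{y}\sim\NORMAL(Rx^*,\sigma^2)$ and $\inprod{u}{y}=\rho G+\sqrt{1-\rho^2}\,Z$, where $Z:=\inprod{u_\perp}{w}\sim\NORMAL(0,\sigma^2)$ is independent of $G$. The event $|G|\le|\rho G+\sqrt{1-\rho^2}\,Z|$ is, conditionally on $G$, a one–sided Gaussian tail event for $Z$; integrating first over $Z$ and then over $G$ (the latter via the Laplace transform $\EE[e^{-\lambda G^2}]$ of a squared Gaussian) yields
\begin{align}
\Pr\{\Delta(\Pi,\Pi^*)\le 0\mid A\}\le 2\exp\left(-\frac{(x^*)^2\,\|(\Pi-\Pi^*)A\|_2^2}{8\sigma^2}\right). \nonumber
\end{align}
This is where the SNR enters cleanly, and the only remaining randomness is that of $A$, through $\|(\Pi-\Pi^*)A\|_2^2$.

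It remains to take $\EE_A$ of this bound. The structural observation is that $\|(\Pi-\Pi^*)A\|_2^2=A^\top(2I-P-P^\top)A$ with $P:=\Pi^\top\Pi^*$ a permutation matrix whose number of non-fixed points is exactly $\dH(\Pi,\Pi^*)=:h$. Decomposing $P$ into cycles, the quadratic form splits into a sum of \emph{independent} cycle–Laplacian forms, one per cycle of length $m_\ell$, with $\sum_\ell m_\ell=h$. Using the explicit eigenvalues $2-2\cos(2\pi t/m)$ of the length-$m$ cycle Laplacian $L_m$ together with the Gaussian Laplace transform
\begin{align}
\EE\bigl[e^{-\mu\, a^\top L_m a}\bigr]=\prod_{t=1}^{m-1}\bigl(1+2\mu(2-2\cos(2\pi t/m))\bigr)^{-1/2}, \nonumber
\end{align}
and the fact that a constant fraction of these eigenvalues are bounded below by an absolute constant, I would lower bound the resulting exponent to get $\EE_A[e^{-\mu\|(\Pi-\Pi^*)A\|_2^2}]\le\exp(-c\,h\log(\SNR))$ for $\mu=\tfrac{(x^*)^2}{8\sigma^2}$. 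Combining this with the conditional bound establishes the lemma with $c'=2$.

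The main obstacle is precisely this last passage from the conditional bound to its $A$-expectation. Because $\|(\Pi-\Pi^*)A\|_2^2$ is a $\chi^2$-type (cycle–Laplacian) quadratic form with a \emph{heavy lower tail}, one cannot replace it by its mean $\Theta(h)$: doing so would produce the far-too-optimistic rate $h\cdot\SNR$, which could never survive the union bound over the roughly $n^h$ competing permutations and would contradict the $\log n$ threshold of Theorem~\ref{thm:upperbound}. The correct rate $h\log(\SNR)$ emerges only from the exact Laplace transform, whose polynomial-in-SNR decay is exactly the signature of the $\chi^2$ lower tail. The technical crux is therefore the eigenvalue count for the cycle Laplacian, and in particular the control of long cycles, where many eigenvalues are near zero; this works out because only a vanishing fraction of the $m-1$ positive eigenvalues are small, so the product still decays like $(\SNR)^{-c\,m}$ in each cycle and multiplies up to $(\SNR)^{-c\,h}$ overall.
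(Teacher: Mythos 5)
Your conditional-on-$A$ bound is the one step that fails as stated. From the factorization $G^2-(\rho G+sZ)^2=\bigl((1-\rho)G-sZ\bigr)\bigl((1+\rho)G+sZ\bigr)$ with $s=\sqrt{1-\rho^2}$, the event $\{\Delta(\Pi,\Pi^*)\le 0\}$ is contained in the union of \emph{two} one-sided Gaussian events, with exponents $(Rx^*)^2(1-\rho)/4\sigma^2$ and $(Rx^*)^2(1+\rho)/4\sigma^2$, i.e.\ controlled by $\|(\Pi-\Pi^*)A\|_2^2$ and $\|(\Pi+\Pi^*)A\|_2^2$ respectively. Your claimed bound $2\exp\bigl(-\mu\|(\Pi-\Pi^*)A\|_2^2\bigr)$ with $\mu=(x^*)^2/8\sigma^2$ keeps only the first branch and is false whenever $\rho<0$: in the extreme case $u=-u^*$ one has $\Delta\equiv 0$, so the left side equals $1$, while $\|(\Pi-\Pi^*)A\|_2^2=4R^2$ makes your right side tiny. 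The repair is to carry both branches --- this is exactly the role of $\min\bigl(\|a-a_\Pi\|_2^2,\|a+a_\Pi\|_2^2\bigr)$ in the paper's reduction via Cauchy--Schwarz --- and then to run your Laplace-transform computation also for the form $a^\top(2I+P+P^\top)a$. That form is even better behaved: on each cycle of length $m$ its eigenvalues are $2+2\cos(2\pi t/m)$, again with a constant fraction bounded below by an absolute constant, and the $n-h$ fixed points contribute an extra block distributed as $4$ times a $\chi^2$ with $n-h$ degrees of freedom, so its MGF decays at least as fast as that of the minus form. With this correction your final bound $c'\exp(-c\,h\log\snr)$ goes through, since $\log(1+c\,\snr)\geq c''\log\snr$ for $\snr>1$, and the cross-cycle independence you invoke is legitimate because distinct cycles involve disjoint coordinates of the Gaussian vector $a$.

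Once repaired, your route is genuinely different from the paper's and arguably cleaner in two places. The paper integrates out $w$ via the event decomposition $\Fspace_1(\delta),\Fspace_2(\delta)$ (Lemma~\ref{lem:noise}), reduces $T_\Pi$ to $\min\|a\mp a_\Pi\|_2^2$, converts each norm into a sum of independent $\chi^2$ variables through the three-independent-set cycle partition (Lemma~\ref{lem:perm}), applies a Chernoff lower-tail bound (Lemma~\ref{lem:tailbound}), and finally balances the signal and tail terms by truncating at the level $t=h\log\snr/\snr$. You instead perform an \emph{exact} one-dimensional Gaussian computation conditional on $A$ (bypassing the $\Fspace$-events entirely, since for $d=1$ the projections are rank one) and then take $\EE_A$ of the resulting exponential directly through the explicit cycle-Laplacian spectrum $2-2\cos(2\pi t/m)$, so the rate $h\log\snr$ emerges from the MGF $(1+c\,\snr)^{-ch}$ without any truncation level to optimize. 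The two approaches encode the same phenomenon --- your polynomial-in-$\snr$ MGF decay is the Chernoff transform of the paper's $\chi^2$ lower-tail bound --- but yours trades the paper's combinatorial partition lemma for an exact spectral computation, while the paper's truncation argument has the advantage of extending with minor changes to $d>1$ (Lemmas~\ref{lem:keylemma} and~\ref{lem:tpi}), where the conditional distribution is no longer a rank-one Gaussian comparison and your exact factorization is unavailable.
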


%\begin{lemma} \label{lem:keylemma}
%Let $\bar{h} \leq c_1 \frac{\log n}{n}$. Then for any $\Pi \in \hballpistar(\bar{h})$,
%$$\Pr \{\Delta(\Pi, \Pi^*) < 0 \} \leq c \exp \left(-c_5 h(\Pi, \Pi^*) \log \SNR \right).$$
%\end{lemma}

We are now ready to prove Theorem \ref{thm:upperbound}.

%Taking it as given for the moment, we can then prove Theorem~\ref{thm:upperbound} for the $d=1$ case by a union bound. 

\begin{proof}[Proof of Theorem \ref{thm:upperbound} for $d=1$] Fix $\epsilon>0$ and assume that the following consequence of condition \eqref{exactcondn} holds:
\begin{align}
c \log \left(\SNR \right) \geq (1 + \epsilon) \log n, \label{condnsnr}
\end{align}
where $c$ is the same as in Lemma \ref{lem:keylemma1}.  Now, observe that
\begin{align}
\Rpihatml &\leq \sum_{\Pi \in \mathcal{P}_n \setminus \Pi^*} \Pr \{\Delta(\Pi, \Pi^*) \leq 0 \} \nonumber \\
&\stackrel{\1}{\leq} \sum_{\Pi \in \mathcal{P}_n \setminus \Pi^*} c' \exp \left(-c\, \dH(\Pi, \Pi^*) \log \left(\SNR \right) \right) \nonumber \\%\label{eq:bound1}\\
&\leq c' \sum_{2 \leq k \leq n} n^k \exp \left(-c\, k \log \left(\SNR \right) \right) \nonumber\\%\label{eq:bound2}\\
&\stackrel{\2}{\leq} c' \sum_{2 \leq k \leq n} n^{-\epsilon k} \nonumber \\%\label{eq:bound2a}\\
%&\leq c' \frac{n^{-2\epsilon}}{1 - n^{-\epsilon}} \nonumber\\
&\leq c' \frac{1}{n^\epsilon (n^\epsilon - 1)}. \nonumber
\end{align}
where step $\1$ follows since $\#\{ \Pi : \dH(\Pi, \Pi^*)=k\}\leq n^k$, and step $\2$ follows from condition~\eqref{condnsnr}. 
Relabelling the constants in condition~\eqref{condnsnr} proves the theorem.
\end{proof}
%\noindent The only remaining detail is to prove Lemma~\ref{lem:keylemma1}.

\subsubsection{Proof of Lemma~\ref{lem:keylemma1}} \label{sec:keylemma1}
Before the proof, we establish notation. 
%We first evaluate the probability over the randomness in $w$ holding $A$ fixed, and then consider the randomness in $A$.
%We begin by splitting the quantity $\Delta(\Pi, \Pi^*)$ into two and analyzing the terms individually. In particular, f
For each $\delta>0$, define the events
\begin{subequations}
\begin{align}
\Fspace_1 (\delta) &= \left \{ |\|\Projorthpistar y\|^2_2  - \|\Projorthpi w\|^2_2| \geq \delta \right\}, \text{ and} \\
\Fspace_2 (\delta) &= \left \{ \|\Projorthpi y\|^2_2  - \|\Projorthpi w\|^2_2 \leq 2\delta \right\}.\end{align}
\end{subequations}
Evidently, 
%We claim that for any choice of $\delta > 0$, we have
\begin{align}
\{\Delta(\Pi, \Pi^*) \leq 0 \} \subseteq \Fspace_1 (\delta) \cup \Fspace_2(\delta). \label{zpclaim}
\end{align}
Indeed, if neither  $\Fspace_1(\delta)$ nor $\Fspace_2(\delta)$ occurs 
\begin{align}
\Delta(\Pi, \Pi^*) = \left(\|\Projorthpi y\|^2_2  - \|\Projorthpi w\|^2_2\right) - \left(\|\Projorthpistar y\|^2_2  - \|\Projorthpi w\|^2_2 \right)
> 2\delta - \delta  = \delta. \nonumber
\end{align} 
%In other words, for any $\delta>0$, one of the events $\Fspace_1(\delta)$ or $\Fspace_2(\delta)$ must occur in order for the event $\{\Delta(\Pi, \Pi^*) \leq 0\}$ to occur, which proves claim~\eqref{zpclaim}.

Thus, to prove Lemma~\ref{lem:keylemma1}, we shall bound the probability of the two events $\Fspace_1(\delta)$ and $\Fspace_2(\delta)$ individually, and then invoke the union bound. Note that inequality~\eqref{zpclaim} holds for all values of $\delta >0$; it is convenient to choose $\delta^* := \frac{1}{3}\|\Projorthpi \Pi^* A x^* \|_2^2$. With this choice, the following lemma, proved in Appendix~\ref{app:noiselemma},  bounds the probabilities of the individual events over randomness in $w$ conditioned on a given $A$.
%\mycomment{first part of lemma holds for all $\delta>0$, second only for $\delta^*$.}

\begin{lemma} \label{lem:noise}
For any $\delta>0$ and with $\delta^* = \frac{1}{3}\|\Projorthpi \Pi^* A x^* \|_2^2$, we have
\begin{subequations}
\begin{align}
\Pr\nolimits_w\{ \Fspace_1(\delta) \} &\leq c' \exp\left(-c \frac{\delta}{\sigma^2} \right) \text{, and} \label{noiseterm} \\
\Pr\nolimits_w\{ \Fspace_2(\delta^*) \} &\leq c' \exp \left(-c \frac{\delta^*}{\sigma^2} \right)\label{signalterm}.
\end{align}
\end{subequations}
\end{lemma}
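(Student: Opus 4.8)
The plan is to handle the two events separately, in each case exploiting the structure of the projections to reduce to a standard Gaussian tail estimate; throughout, $A$ is fixed — equivalently, since $d=1$, the single column $a \in \real^n$ of $A$ is fixed — and all probabilities are over $w \sim \NORMAL(0,\sigma^2 I_n)$.

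For $\Fspace_1$, the key first observation is that $\Projorthpistar$ annihilates the signal: since $\Pi^* A x^*$ lies in the column space of $\Pi^* A$, we have $\Projorthpistar y = \Projorthpistar w$, so $\mathcal{F}_1(\delta) = \{\, | \,\|\Projorthpistar w\|_2^2 - \|\Projorthpi w\|_2^2 \,| \geq \delta \,\}$. Writing $u := \Pi a/\|a\|_2$ and $v := \Pi^* a/\|a\|_2$, which are unit vectors because permutations preserve norms, we have $\Projorthpi = I - uu^\top$ and $\Projorthpistar = I - vv^\top$ for $d=1$, so that
\[
\|\Projorthpistar w\|_2^2 - \|\Projorthpi w\|_2^2 = w^\top(uu^\top - vv^\top)w = (u^\top w)^2 - (v^\top w)^2 .
\]
Now comes the clean step: set $s := u^\top w + v^\top w$ and $t := u^\top w - v^\top w$, so that the quantity above equals $st$. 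Both $s$ and $t$ are centered Gaussians, and because $u^\top w$ and $v^\top w$ have the \emph{equal} variance $\sigma^2$, the pair $(s,t)$ is uncorrelated and hence independent, with variances $\var(s),\var(t) = 2\sigma^2(1 \pm u^\top v) \leq 2\sigma^2$. The product of two independent centered Gaussians is sub-exponential: its moment generating function equals $(1 - \lambda^2 \var(s)\var(t))^{-1/2}$ for $|\lambda| < (\var(s)\var(t))^{-1/2}$, with $\var(s)\var(t) \leq 4\sigma^4$. A Chernoff bound then gives $\Pr\nolimits_w\{\Fspace_1(\delta)\} = \Pr\{|st| \geq \delta\} \leq c' \exp(-c\,\delta/\sigma^2)$, the constant $c'$ absorbing the uninteresting regime of small $\delta$.

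For $\Fspace_2$, I would expand about the noise. Writing $b := \Pi^* A x^*$ and using $\inprod{\Projorthpi b}{\Projorthpi w} = \inprod{\Projorthpi b}{w}$ (idempotence and symmetry of $\Projorthpi$),
\[
\|\Projorthpi y\|_2^2 - \|\Projorthpi w\|_2^2 = \|\Projorthpi b\|_2^2 + 2\inprod{\Projorthpi b}{w} = 3\delta^* + 2\inprod{\Projorthpi b}{w},
\]
by the choice $\delta^* = \tfrac{1}{3}\|\Projorthpi b\|_2^2$. Hence $\Fspace_2(\delta^*) = \{\, \inprod{\Projorthpi b}{w} \leq -\delta^*/2 \,\}$. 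The random variable $\inprod{\Projorthpi b}{w}$ is a single centered Gaussian of variance $\sigma^2\|\Projorthpi b\|_2^2 = 3\sigma^2\delta^*$, so the standard one-sided Gaussian tail bound gives $\Pr\nolimits_w\{\Fspace_2(\delta^*)\} \leq \exp\!\big(-\tfrac{(\delta^*/2)^2}{2\cdot 3\sigma^2\delta^*}\big) = \exp(-\delta^*/(24\sigma^2))$, which is of the claimed form.

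The computation for $\Fspace_2$ is essentially forced once the cross term is recognized as a mean-zero Gaussian, so I do not expect difficulty there. The only genuinely delicate point is $\Fspace_1$: the difference of the two projected noise norms is not a single chi-square but a signed combination of correlated squares, and the elegant resolution is the rank-two collapse (special to $d=1$) followed by the observation that the equal variances make $s$ and $t$ \emph{independent}, turning the problem into a product of two independent Gaussians. I expect verifying the sub-exponential tail (equivalently, the Bessel-type moment generating function) and checking that the bound extends to all $\delta>0$ through the constant $c'$ to be the main, though routine, obstacle.
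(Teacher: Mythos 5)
Your proposal is correct, and for $\Fspace_2$ it is essentially the paper's own argument: both expand $\|\Projorthpi y\|_2^2 - \|\Projorthpi w\|_2^2 = \|\Projorthpi \Pi^* A x^*\|_2^2 + 2\inprod{\Projorthpi \Pi^* A x^*}{w}$, observe the cross term is a centered Gaussian of variance $\sigma^2\|\Projorthpi \Pi^* A x^*\|_2^2 = 3\sigma^2\delta^*$, and apply a one-sided Gaussian tail bound, landing on the same $\exp(-\delta^*/(24\sigma^2))$. For $\Fspace_1$, however, your route genuinely differs from the paper's, and in one important respect is narrower. The paper does not specialize to $d=1$: writing $P_{\Pi \cap \Pi^*}$ for the projection onto ${\sf range}(\Pi A) \cap {\sf range}(\Pi^* A)$, it shows $\|\Projorthpistar w\|_2^2 - \|\Projorthpi w\|_2^2 = \|(P_\Pi - P_{\Pi\cap\Pi^*})w\|_2^2 - \|(P_{\Pi^*} - P_{\Pi\cap\Pi^*})w\|_2^2$, a difference of two (scaled) $\chi^2_k$ variables with $k = \min(d, \dH(\Pi,\Pi^*))$, then centers each at its mean, union bounds, and invokes the generic sub-exponential tail bound of Lemma~\ref{lem:subexp}. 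Your rank-two collapse $(u^\top w)^2 - (v^\top w)^2 = st$, with $s = (u+v)^\top w$ and $t = (u-v)^\top w$ independent because $u$ and $v$ have equal norms, is a clean alternative that yields an exact moment generating function and avoids any black-box sub-exponential lemma — but it is special to rank-one projections, i.e.\ to $d=1$. This matters because the paper reuses Lemma~\ref{lem:noise} \emph{verbatim} in the general-$d$ proof (Section~\ref{sec:proofub2} states it ``applies without modification'' to obtain the bound~\eqref{eq:bound3'}); for $d>1$ the difference of projected noise norms is a difference of two correlated $\sigma^2\chi^2_k$ variables with no analogous product factorization, so your argument would need to be replaced by something like the paper's intersection-of-ranges decomposition to support the lemma as actually used. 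Finally, one small slip: $\var(s) = 2\sigma^2(1 + u^\top v)$ can be as large as $4\sigma^2$, not $2\sigma^2$ as you state; fortunately the only quantity your Chernoff computation uses is $\var(s)\var(t) = 4\sigma^4\bigl(1 - (u^\top v)^2\bigr) \leq 4\sigma^4$, which is correct, so the conclusion for the $d=1$ case stands.
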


%\noindent The proofs of both claims use algebraic manipulation and basic sub-Gaussian and sub-exponential tail bounds, and can be found in Appendix~\ref{app:noiselemma}. 

The next lemma, proved in Section~\ref{sec:tpi1}, is   needed in order to incorporate the randomness in $A$ into the required tail bound. It is convenient to introduce the shorthand $T_\Pi := \|\Projorthpi \Pi^* A x^* \|_2^2$. %, and the lemma is stated with $h:= \dH(\Pi, \Pi^*)$ denoting the Hamming distance between $\Pi$ and $\Pi^*$.

\begin{lemma}\label{lem:tpi1}
For $d= 1$ and any two permutation matrices $\Pi$ and $\Pi^*$ at Hamming distance $h$, we have
\begin{align}
\Pr\nolimits_A \{ T_\Pi \leq t \|x^*\|_2^2 \} \leq 6  \exp\left( - \frac{h}{10} \left[\log \frac{h}{t} + \frac{t}{h} -1 \right]\right) \label{eq:maxterm}
\end{align}
for all $t \in [0, h]$.
%In particular, if $h = \BigO \left(\frac{n}{\log n}\right)$, we have that for $t^* := ch \frac{\log \SNR}{\SNR}$, 
%\begin{align}
%\Pr_A \{ T_\Pi \leq t^* \|x^*\|_2^2 \} \leq c \left( \frac{5t^*}{3h}\right)^{h/10}. \label{eq:maxterm1}
%\end{align}
\end{lemma}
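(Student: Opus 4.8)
The plan is to specialize to $d=1$, reduce $T_\Pi$ to the squared norm of a single Gaussian vector projected off a line, and then control its lower tail through an exact algebraic identity that reduces everything to the lower tails of two Gaussian quadratic forms of trace $h$.

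\emph{Reduction.} For $d=1$ the design is a single vector $a \in \real^n$ with i.i.d.\ standard Gaussian entries, $\Pi^* A x^* = x^*\, \Pi^* a$, and $\Projorthpi$ is the projection onto the orthogonal complement of the line through $\Pi a$. Hence $T_\Pi = (x^*)^2\,\|\Projorthpi \Pi^* a\|_2^2$, the factor $\|x^*\|_2^2$ cancels in the event $\{T_\Pi \le t\|x^*\|_2^2\}$, and it suffices to bound $\Pr_A\{\|\Projorthpi \Pi^* a\|_2^2 \le t\}$. Writing the projection as a distance to a line and applying the orthogonal map $(\Pi^*)^\top$, I would rewrite this quantity as $\min_{s\in\real}\|a - s\,a_\sigma\|_2^2$, where $a_\sigma := (\Pi^*)^\top \Pi a$ reindexes $a$ by a permutation $\sigma$ whose set $M$ of non-fixed points has size exactly $h = \dH(\Pi,\Pi^*)$, with complementary fixed set $F$.

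\emph{Algebraic decoupling.} Permutations preserve norm, so $\|\Pi^* a\|_2^2 = \|\Pi a\|_2^2 =: S = \|a\|_2^2$, and with $q := \langle \Pi^* a, \Pi a\rangle$ the projection equals $\|\Projorthpi \Pi^* a\|_2^2 = S - q^2/S = (S-q)(S+q)/S$. Setting $\alpha = \sum_{i\in F} a_i^2$, $\beta = \sum_{i\in M} a_i^2$, and $\gamma = \sum_{i\in M} a_i a_{\sigma(i)}$ gives $S = \alpha+\beta$, $q = \alpha+\gamma$, $S-q = \beta-\gamma$, and $S+q = 2\alpha+\beta+\gamma$. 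The key observation is that the projection is the harmonic mean of the last two quantities, so that
\[
\|\Projorthpi \Pi^* a\|_2^2 \;=\; \frac{(\beta-\gamma)(2\alpha+\beta+\gamma)}{\alpha+\beta} \;\ge\; \min\{\,\beta-\gamma,\; 2\alpha+\beta+\gamma\,\},
\]
which yields the two-term union bound $\Pr_A\{\|\Projorthpi \Pi^* a\|_2^2 \le t\} \le \Pr\{\beta - \gamma \le t\} + \Pr\{2\alpha+\beta+\gamma \le t\}$; giving each term a bound of the target form and summing is what produces the prefactor $6$.

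\emph{Quadratic-form lower tails.} Both terms are lower tails of Gaussian quadratic forms $g_M^\top M g_M$ with $g_M \sim \NORMAL(0,I_h)$ and $M$ positive semidefinite of trace $h$: restricting to $M$, $\beta-\gamma = g_M^\top\big(I - \tfrac12(P_\sigma + P_\sigma^\top)\big) g_M$ and $\beta+\gamma = g_M^\top\big(I + \tfrac12(P_\sigma + P_\sigma^\top)\big) g_M$, where $P_\sigma$ is the $h\times h$ permutation matrix of $\sigma|_M$; the extra $2\alpha\ge 0$ only helps the second term, and is itself a $2\chi^2_{n-h}$ of mean $2(n-h)\ge h$ whenever $h\le n/2$, so that term is negligible (for $h>n/2$ one instead uses $2\alpha+\beta+\gamma \ge \beta+\gamma$, the symmetric quadratic form). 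Decomposing $\sigma$ into cycles makes the moment generating function factor across cycles, an $\ell$-cycle contributing eigenvalues $2\sin^2(\pi j/\ell)$ (resp.\ $2\cos^2(\pi j/\ell)$), $j=0,\dots,\ell-1$. The Chernoff bound $\Pr\{g_M^\top M g_M \le t\} \le \inf_{\lambda>0} e^{\lambda t}\prod_j (1+2\lambda\mu_j)^{-1/2}$, combined with the product identity $\prod_{j=1}^{\ell-1} 2\sin^2(\pi j/\ell) = \ell^2/2^{\ell-1}$, then produces a bound of the form $\exp\!\big(-\tfrac{h}{10}\big[\log\tfrac ht + \tfrac th - 1\big]\big)$, the exponent being exactly the lower-tail rate function of a $\chi^2$ with a full $\Theta(h)$ degrees of freedom.

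\emph{Main obstacle.} The crux is this last step: obtaining the quadratic-form lower tail \emph{uniformly over the entire range} $t\in[0,h]$ with the clean exponent $\tfrac{h}{10}[\log\tfrac ht + \tfrac th - 1]$. Since $\|\Projorthpi \Pi^* a\|_2^2$ concentrates around $\Theta(h)$, the bound must capture a full $\Theta(h)$ effective degrees of freedom for $t$ up to within $O(\sqrt h)$ of $h$ — so any shortcut that retains only a constant fraction of the coordinates (e.g.\ a matching argument yielding $\chi^2_{h/4}$) is too lossy to reach the stated constant. The difficulty is that a long cycle contributes eigenvalues as small as $\Theta(\ell^{-2})$, which a priori weaken the lower tail; the real work is to show, via the trigonometric product identities, that these small eigenvalues only introduce polynomial-in-$\ell$ prefactors that are absorbed into the constants $10$ and $6$, rather than degrading the effective number of degrees of freedom.
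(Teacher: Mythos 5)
Your reduction is sound and, up to notation, coincides with the paper's: the harmonic-mean inequality you use gives exactly $\|\Projorthpi \Pi^* a\|_2^2 \geq \min\{S-q,\,S+q\} = \tfrac12 \min\bigl(\|a - a_\sigma\|_2^2,\, \|a + a_\sigma\|_2^2\bigr)$, which is the same two-event union bound the paper reaches via Cauchy--Schwarz, and your treatment of the $2\alpha$ term via $\chi^2_{n-h}$ is fine. The genuine gap is the final step, and you say so yourself: the claim that the cycle-by-cycle Chernoff bound with eigenvalues $2\sin^2(\pi j/\ell)$ ``produces a bound of the form $\exp(-\tfrac{h}{10}[\log\tfrac{h}{t} + \tfrac{t}{h}-1])$'' is asserted, not proven, and your ``main obstacle'' paragraph explicitly defers the real work. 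This is not a routine verification: the optimization $\inf_{\lambda>0}\bigl[\lambda t - \tfrac12\sum_j \log(1+2\lambda\mu_j)\bigr]$ must be controlled uniformly over $t \in [0,h]$ \emph{and} uniformly over all cycle types of $\sigma|_M$ (a single $h$-cycle, $h/2$ transpositions, and everything in between behave differently: e.g.\ $h/2$ transpositions give $\beta-\gamma \stackrel{d}{=} 2Z_{h/2}$, only $h/2$ degrees of freedom, while a long cycle has eigenvalues as small as $\Theta(\ell^{-2})$ and a prefactor $2^{(\ell-1)/2}/\ell$ from $\prod_{j\geq 1}\mu_j = \ell^2/2^{\ell-1}$ that is exponential, not polynomial, in $\ell$ and must be beaten by the slack between the exponents $(\ell-1)/2$ and $\ell/10$). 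Each regime ($t/h$ small, intermediate, and $t$ within $O(\sqrt h)$ of $h$) needs a separate estimate, e.g.\ via the exact identity $\sum_j \log(1+4\lambda \sin^2(\pi j/\ell)) \approx \ell \log\tfrac{(1+\sqrt{1+4\lambda})^2}{4}$; the route is viable but the proposal leaves precisely this, the heart of the lemma, uncompleted.

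Separately, your claimed obstruction --- that ``any shortcut that retains only a constant fraction of the coordinates (e.g.\ a matching argument yielding $\chi^2_{h/4}$) is too lossy'' --- misdiagnoses the landscape, and the paper's proof is the counterexample. The paper does \emph{not} discard coordinates; it \emph{partitions} all $h$ moved coordinates into three independent sets $V_1, V_2, V_3$ of the incidence graph, each of size $h_i \geq h/5$ (Lemma~\ref{lem:perm}), so that $\tfrac12\|a-a_\Pi\|_2^2 \stackrel{d}{=} Z_{h_1}+Z_{h_2}+Z_{h_3}$ with each summand \emph{marginally} $\chi^2_{h_i}$ (no joint independence needed). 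The pigeonhole step is the key device you are missing: if the full sum is at most $t$, then some part must fall below its \emph{proportional share} $t h_i/h$, whence $\Pr\{Z_{h_i} \leq t h_i/h\} \leq (\tfrac{t}{h} e^{1-t/h})^{h_i/2} \leq (\tfrac{t}{h} e^{1-t/h})^{h/10}$. Because the threshold scales with $h_i$, the ratio inside the rate function stays $t/h$ rather than $t/h_i$, so the bound remains nonvacuous all the way up to $t = h$ --- this is exactly what distinguishes partitioning from your ``keep a matching, drop the rest'' strawman, where the retained mean drops below $t$ and the lower-tail bound trivializes. Three terms per event and two events give the prefactor $6$ and the constant $\tfrac12 \cdot \tfrac{h}{5} = \tfrac{h}{10}$, with no spectral analysis at all.
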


\noindent  We  now have all the ingredients  to prove Lemma \ref{lem:keylemma1}.

\begin{proof}[Proof of Lemma \ref{lem:keylemma1}]
Applying Lemma~\ref{lem:noise} and using the union bound then yields
\begin{align}
\Pr\nolimits_w \{\Delta(\Pi, \Pi^*) \leq 0\} &\leq \Pr\nolimits_w\{ \Fspace_1(\delta^*) \} + \Pr\nolimits_w\{ \Fspace_2(\delta^*) \} \nonumber\\
&\leq c' \exp \left(-c \frac{T_\Pi}{\sigma^2} \right). \label{eq:bound3}
\end{align}
%where we have introduced the shorthand .

Combining bound \eqref{eq:bound3} with Lemma \ref{lem:tpi1} yields
\begin{align}
\Pr \{\Delta(\Pi, \Pi^*) \leq 0\} &\leq c' \exp\left(-c \frac{t \|x^*\|_2^2}{\sigma^2}\right) \Pr\nolimits_A \{T_\Pi \geq t \|x^*\|_2^2\} + \Pr\nolimits_A \{T_\Pi \leq t \|x^*\|_2^2 \} \nonumber\\
&\leq c' \exp\left(-c \frac{t \|x^*\|_2^2}{\sigma^2}\right) + 6  \exp\left( - \frac{h}{10} \left[\log \frac{h}{t} + \frac{t}{h} -1 \right]\right), \label{balance}
\end{align}
where the last inequality holds provided that $t \in [0, h]$, and the probability in the LHS is now taken over randomness in both $w$ \emph{and} $A$.

%Note that ideally, we would like to minimize the RHS of inequality~\eqref{balance} over all $t \leq h$ in order to obtain the tightest bound. However, carrying out the minimization exactly makes the analysis unwieldy, so we take a different route. 
%assuming\footnote{Such an assumption is necessary for condition \eqref{condnsnr} to be satisfied.} that the SNR $\SNR > 1$ in order to simplify the proof. 
Using the shorthand $\snr:= \SNR$, setting $t = h \frac{\log \snr}{\snr}$, and noting that $t \in [0, h]$ since $\snr > 1$, we have
\begin{align}
\Pr \{\Delta(\Pi, \Pi^*) \leq 0\} &\leq c' \exp\left(-c h \log \snr \right) + 6  \exp\left( - \frac{h}{10} \left[\log \left(\frac{\snr}{\log \snr}\right) + \frac{\log \snr}{\snr} - 1 \right]\right).
\end{align}
It is easily verified that for all $\snr >1$, we have
\begin{align}
\log \left(\frac{\snr}{\log \snr}\right) + \frac{\log \snr}{\snr} - 1 > \frac{\log \snr}{4}. \label{easy}
\end{align} 
Hence, after substituting for $\snr$, we have
\begin{align}
\Pr \{\Delta(\Pi, \Pi^*) \leq 0\} &\leq c' \exp\left(-c h \log \left( \SNR \right)\right).
\end{align}
\end{proof}

\subsubsection{Proof of Lemma~\ref{lem:tpi1}} \label{sec:tpi1}
In the case $d=1$, the matrix $A$ is composed of a single vector $a \in \real^n$. Recalling the random variable $T_\Pi = \|\Projorthpi \Pi^* A x^*\|_2^2$, we have
\begin{align}
T_\Pi &= (x^*)^2 \left( \|a\|_2^2 - \frac{1}{\|a \|_2^2} \langle a_\Pi, a \rangle^2 \right) \nonumber\\
&\stackrel{\1}{\geq} (x^*)^2 \left(\|a\|_2^2 - |\langle a, a_\Pi \rangle | \right) \nonumber\\
&= \frac{(x^*)^2}{2} \min \left(\| a - a_\Pi \|_2^2, \| a + a_\Pi \|_2^2 \right), \nonumber
\end{align}
where step $\1$ follows from the Cauchy Schwarz inequality. Applying the union bound then yields
\begin{align}
\Pr\{T_\Pi \leq t (x^*)^2 \} \leq \Pr\{\| a - a_\Pi \|_2^2 \leq 2t\} +  \Pr\{\| a + a_\Pi \|_2^2 \leq 2t\}. \nonumber
\end{align}

Let $Z_\ell$ and $\tilde{Z}_\ell$ denote (not necessarily independent) $\chi^2$ random variables with $\ell$ degrees of freedom. We split the analysis into two cases.

\paragraph{Case $h \geq 3$:} Lemma~\ref{lem:perm} from Appendix~\ref{sec:perm} guarantees that
\begin{subequations}
\begin{align}
\frac{\| a - a_\Pi \|_2^2}{2} &\stackrel{d}{=} Z_{h_1} + Z_{h_2} + Z_{h_3}, \text{ and} \\
\frac{\| a + a_\Pi \|_2^2}{2} &\stackrel{d}{=} \widetilde{Z}_{h_1} + \widetilde{Z}_{h_2} + \widetilde{Z}_{h_3} + \widetilde{Z}_{n-h},
\end{align}
\end{subequations}
where $\stackrel{d}{=}$ denotes equality in distribution and $h_1, h_2, h_3 \geq \frac{h}{5}$ with $h_1 + h_2 + h_3 = h$. An application of the union bound then yields
\begin{align}
\Pr\{\|a - a_\Pi\|_2^2 \leq 2t \} \leq \sum_{i=1}^3 \Pr \left\{Z_{h_i} \leq t \frac{h_i}{h} \right\}. \nonumber
\end{align}
Similarly, provided that $h \geq 3$, we have
\begin{align}
\Pr\{\|a + a_\Pi\|_2^2 \leq 2t \} &\leq \Pr\{\widetilde{Z}_{h_1} + \widetilde{Z}_{h_2} + \widetilde{Z}_{h_3} + \widetilde{Z}_{n-h} \leq t\} \nonumber\\
&\stackrel{\2}{\leq} \Pr\{\widetilde{Z}_{h_1} + \widetilde{Z}_{h_2} + \widetilde{Z}_{h_3} \leq t\} \nonumber\\
&\stackrel{\3}{\leq} \sum_{i=1}^3 \Pr \left\{\widetilde{Z}_{h_i} \leq t\frac{h_i}{h} \right\}, \nonumber
\end{align}
where inequality $\2$ follows from the non-negativity of $Z_{n - h}$, and the monotonicity of the CDF; and inequality $\3$ from the union bound.
Finally, bounds on the lower tails of $\chi^2$ random variables (see Lemma~\ref{lem:tailbound} in Appendix~\ref{sec:tailbound}) yield
\begin{align}
\Pr\left\{Z_{h_i} \leq t\frac{h_i}{h}\right\} = \Pr\left\{\widetilde{Z}_{h_i} \leq t\frac{h_i}{h}\right\} &\stackrel{\4}{\leq} \left( \frac{t}{h} \exp\left(1 - \frac{t}{h} \right)\right)^{h_i/2} \nonumber \\
&\stackrel{\5}{\leq} \left( \frac{t}{h} \exp\left(1 - \frac{t}{h} \right)\right)^{h/10}. \nonumber
\end{align}
Here, inequality $\4$ is valid provided $\frac{th_i}{h} \leq h_i$, or equivalently, if $t \leq h$, whereas inequality~$\5$ follows since $h_i \geq h/5$ and the function $x e^{1-x} \in [0,1]$ for all $x \in [0,1]$. Combining the pieces proves Lemma~\ref{lem:tpi1} for $h \geq 3$.

\paragraph{Case $h =2$:} In this case, we have
\begin{align}
\frac{\| a - a_\Pi \|_2^2}{2} \stackrel{d}{=} 2 Z_1, \;\; \text{ and }\;\; \frac{\| a + a_\Pi \|_2^2}{2} \stackrel{d}{=} 2\widetilde{Z}_1 +  \widetilde{Z}_{n-2}. \nonumber
\end{align}
Proceeding as before by applying the union bound and Lemma~\ref{lem:tailbound}, we have that for $t \leq 2$, the random variable $T_\Pi$ obeys the tail bound
\begin{align}
\Pr\{T_\Pi \leq t (x^*)^2 \} \leq 2 \left( \frac{t}{2} \exp\left(1 - \frac{t}{2} \right)\right)^{1/2} \leq 6 \left( \frac{t}{h} \exp\left(1 - \frac{t}{h} \right)\right)^{h/10}, \text{ for }h = 2. \nonumber
\end{align} \qed
%We have thus completed the proof of Lemma~\ref{lem:tpi1}, and consequently, that of Theorem~\ref{thm:upperbound} for the $d= 1$ case. 

In the next section, we prove Theorem~\ref{thm:upperbound} for the general case.

\subsection{Proof of Theorem~\ref{thm:upperbound}: Case $d \in \{2, 3, \ldots, n-1\}$} \label{sec:proofub2}
In order to be consistent, we follow the same proof structure as for the $d=1$ case. Recall the definition of $\Delta(\Pi, \Pi^*)$ from equation~\eqref{zpi}. We begin with an equivalent of the key lemma to bound the probability of the event $\{\Delta(\Pi, \Pi^*) \leq 0\}$. %As before, let $h:= \dH(\Pi, \Pi^*)$ denote the Hamming distance between $\Pi$ and $\Pi^*$.
%As evident from the proof for the $d= 1$ case, the main difference in moving to the case when $d >1$ is in the expression for $T_\Pi$. We therefore require an equivalent of Lemma~\ref{lem:tpi1} for this case.

\begin{lemma} \label{lem:keylemma}
For any $1< d < n$, any two permutation matrices $\Pi$ and $\Pi^*$ at Hamming distance $h$, and provided $\left(\SNR \right) n^{-\frac{2n}{n-d}} > \frac{5}{4}$, we have
\begin{align}
\Pr \{\Delta(\Pi, \Pi^*) \leq 0\} &\leq c' \max \Bigg[ \exp \left(-n \log \frac{n}{2} \right), 
 \exp \left( c h \left( \log \left(\SNR \right) - \frac{2n}{n-d} \log n \right) \right) \Bigg].
\end{align}
\end{lemma}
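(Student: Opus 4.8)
The plan is to mirror the $d=1$ argument of Section~\ref{sec:keylemma1} as closely as possible, isolating the single place where the dimension genuinely enters. The set inclusion $\{\Delta(\Pi,\Pi^*)\le 0\}\subseteq \Fspace_1(\delta)\cup\Fspace_2(\delta)$ and the noise bound of Lemma~\ref{lem:noise} are both dimension-free; indeed, Lemma~\ref{lem:noise} is already stated for general $d$ with $\delta^*=\tfrac13\|\Projorthpi\Pi^*Ax^*\|_2^2$. Hence, conditioning on $A$ and taking $\delta=\delta^*$, the same union bound yields
\begin{align}
\Pr\nolimits_w\{\Delta(\Pi,\Pi^*)\le 0\}\le c'\exp\Bigl(-c\,\frac{T_\Pi}{\sigma^2}\Bigr),\qquad T_\Pi:=\|\Projorthpi\Pi^*Ax^*\|_2^2. \nonumber
\end{align}
So the entire burden of the general-$d$ case is to replace Lemma~\ref{lem:tpi1} by a lower-tail bound on $T_\Pi$ over the randomness in the $n\times d$ matrix $A$, and then repeat the balancing-over-$t$ step verbatim.

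For the tail bound on $T_\Pi$ (the crux), I would first reduce to $\Pi^*=I$ using the permutation invariance of the i.i.d.\ Gaussian ensemble, and let $S$ with $|S|=h$ be the support on which $\Pi$ disagrees with the identity. Writing $T_\Pi=\min_{\beta\in\real^d}\|Ax^*-\Pi A\beta\|_2^2$ exhibits $T_\Pi$ as the least-squares residual of the signal $g:=Ax^*$ regressed on the columns of $\Pi A$: the $n-h$ rows on which $\Pi$ agrees with the identity pin the minimizer near $x^*$, while the $h$ rows in $S$ carry the ``signal that sticks out.'' Geometrically, I would split $\mathrm{col}(A)=\mathrm{span}(b)\oplus V$, where $b:=Ax^*/\|x^*\|_2\sim\NORMAL(0,I_n)$ and $V$ is a $(d-1)$-dimensional subspace independent of $b$, so that $T_\Pi=\|x^*\|_2^2\,\|P^{\perp}_{\,\mathrm{span}(b_\Pi)+V_\Pi}\,b\|_2^2$. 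Projecting $b$ off the nuisance subspace $V_\Pi=\Pi V$ first reduces the effective degrees of freedom from $n$ to $n-d$ and costs only a $\chi^2_{d-1}$-type correction, after which the remaining one-dimensional direction $b_\Pi$ is handled exactly as in the $d=1$ proof: the cycle structure of $\Pi$ (Lemma~\ref{lem:perm}) expresses the relevant increment as a sum of $\chi^2$ variables with $\Theta(h)$ total degrees of freedom, whose lower tail is controlled by Lemma~\ref{lem:tailbound}. The failure of the requisite concentration for the random nuisance directions contributes the $\exp(-n\log\tfrac n2)$ term.

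Having established a tail bound of the form $\Pr\nolimits_A\{T_\Pi\le t\|x^*\|_2^2\}\le \exp(-n\log\tfrac n2)+(\text{$\chi^2$ term in }t,h,d)$, I would combine it with the conditional noise bound exactly as in the proof of Lemma~\ref{lem:keylemma1}: split on the event $\{T_\Pi\le t\|x^*\|_2^2\}$ and its complement, and optimize the threshold $t$. The only change from the $d=1$ computation is that the effective signal-to-noise ratio is now $\snr\cdot n^{-2n/(n-d)}$ rather than $\snr$; choosing $t\propto h\,\tfrac{\log(\text{eff snr})}{\text{eff snr}}$ (which lies in $[0,h]$ precisely because of the hypothesis $(\SNR)\,n^{-2n/(n-d)}>\tfrac54$) produces the main term $\exp\bigl(ch(\log\snr-\tfrac{2n}{n-d}\log n)\bigr)$, while the conditioning-failure event contributes $\exp(-n\log\tfrac n2)$; taking the maximum yields the claim.

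I expect the main obstacle to be the middle step, and specifically the bookkeeping of the $d$-dependence: disentangling the signal direction $b_\Pi$ (which is correlated with $b$) from the independent $(d-1)$-dimensional nuisance subspace $V_\Pi$ without losing more than a factor $\tfrac{n}{n-d}$ in the exponent, and pinning the relevant concentration failure to an event of probability $\exp(-n\log\tfrac n2)$. When $d$ is close to $n$ the residual space is thin and the permuted signal can be almost entirely absorbed by $\mathrm{col}(\Pi A)$, which is exactly what forces the $\tfrac{n}{n-d}$ inflation and the threshold $\log\snr\gtrsim\tfrac{n}{n-d}\log n$.
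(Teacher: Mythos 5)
Your overall architecture coincides with the paper's: the inclusion $\{\Delta(\Pi,\Pi^*)\le 0\}\subseteq\Fspace_1(\delta^*)\cup\Fspace_2(\delta^*)$ and Lemma~\ref{lem:noise} are indeed dimension-free, the whole problem does reduce to a lower-tail bound on $T_\Pi$ over the randomness in $A$ (this is precisely the paper's Lemma~\ref{lem:tpi}), your factorization $T_\Pi=\|x^*\|_2^2\,\|P^{\perp}_{\mathrm{span}(b_\Pi)+V_\Pi}\,b\|_2^2$ is exactly the paper's starting point, and your closing step --- balancing over $t$ with the effective signal-to-noise ratio $\snr\cdot n^{-\frac{2n}{n-d}}$ and a case split on which term of the maximum dominates --- is the paper's argument verbatim.

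The one place your sketch would stall, however, is the order of projections in the crux step. You propose to project $b$ off the nuisance subspace $V_\Pi$ \emph{first} and then run the $d=1$ machinery on the remaining direction $b_\Pi$. But the $d=1$ proof of Lemma~\ref{lem:tpi1} relies on the explicit coordinate relation between $a$ and $a_\Pi$: it is the decomposition of $\|a\pm a_\Pi\|_2^2$ into $\chi^2$ blocks via the cycle structure (Lemma~\ref{lem:perm}) that produces the $\Theta(h)$ degrees of freedom, and this structure is destroyed by first applying $P^{\perp}_{V_\Pi}$, since projection does not commute with the permutation: $(P^{\perp}_{V_\Pi}b)_\Pi \neq P^{\perp}_{V_\Pi}(b_\Pi)$ in general. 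The paper resolves this by reversing the order: writing $S_1=\mathrm{span}(b_\Pi)$ and $S_{-1}=V_\Pi$, it uses $\|P_{S^\perp_{-1}\cap S^\perp_1}b\|_2 = \|P_{S^\perp_{-1}\cap S^\perp_1}P_{S^\perp_1}b\|_2$ and conditions on $b$ (equivalently, on the first column $a$). Then $S_1^\perp$ is a fixed $(n-1)$-dimensional subspace while $S_{-1}$ is independent of $b$, so $S^\perp_{-1}\cap S^\perp_1$ is a \emph{uniformly random} $(n-d)$-dimensional subspace of $S_1^\perp$; this yields the clean independent factorization $T_\Pi\stackrel{d}{=}(1-X_{d-1})\,T^1_\Pi$, in which $T^1_\Pi$ is literally the $d=1$ quantity (bounded by Lemma~\ref{lem:tpi1} with no modification) and $X_{d-1}$, the squared norm of a random $(d-1)$-dimensional projection of a unit vector, is controlled by the Dasgupta--Gupta bound (Lemma~\ref{lem:jl}) at level $z=n^{-\frac{2n}{n-d}}$, which is the exact source of the $\exp\left(-n\log\frac{n}{2}\right)$ term. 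One further calibration error worth noting: the range constraint in the general-$d$ tail lemma is $t\le h\,n^{-\frac{2n}{n-d}}$, not $t\in[0,h]$, and the paper's threshold is $t^*=h\log\left(\snr\cdot n^{-\frac{2n}{n-d}}\right)/\snr$ --- division by $\snr$ itself, not by the effective SNR as you wrote; with your choice $t\propto h\log(\mathrm{eff})/\mathrm{eff}$ the range condition would fail by a factor $n^{\frac{2n}{n-d}}$, whereas the paper's choice together with the hypothesis $\snr\cdot n^{-\frac{2n}{n-d}}>\frac{5}{4}$ keeps $t^*$ in range.
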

%\mycomment{There are indeed two different constants $C$ and $c''$.}

We prove Lemma~\ref{lem:keylemma} in Section~\ref{pf:keylemma}. Taking it as given, we are ready to prove Theorem \ref{thm:upperbound} for the general case.

\begin{proof}[Proof of Theorem \ref{thm:upperbound}, general case]
As before, we use the union bound to prove the theorem. We begin by fixing some $\epsilon \in (0, \sqrt{n})$ and assuming that the following consequence of condition~\eqref{exactcondn} holds:
\begin{align}
c \log \left(\SNR \right) \geq \left(1 + \epsilon + c\frac{2n}{n-d}\right) \log n. \label{condn2snr}
\end{align}

%\begin{align}
%\Rpihatml \leq \sum_{\Pi \in \mathcal{P}_n \setminus \Pi^*} \Pr \{\Delta(\Pi, \Pi^*) \leq 0\}. \nonumber
%\end{align}

Now define $b(k) := \sum_{\Pi : \dH(\Pi, \Pi^*) = k} \Pr \{\Delta(\Pi, \Pi^*) \leq 0\}$. Applying Lemma~\ref{lem:keylemma} then yields
\begin{align}
b(k) &\leq \frac{n!}{(n-k)!} c' \max \Bigg\{ \exp \left(-n \log \frac{n}{2} \right),  \exp \left( - c k \left(\log \left(\SNR \right) - \frac{2n}{n-d} \log n \right) \right) \Bigg\}. \label{max1}
\end{align}
We upper bound $b(k)$ by splitting the analysis into two cases.

\paragraph{Case 1:} If the first term attains the maximum in the RHS of inequality~\eqref{max1}, then for all $2 \leq k \leq n$, we have 
\begin{align}
b(k) %&\leq c'\frac{n!}{(n-k)!} \exp\left(-n \log \frac{n}{2} \right) \nonumber\\
&\leq c'n! \exp( -n \log n + n \log 2) \nonumber\\
&\stackrel{\1}{\leq} c'e\sqrt{n} \exp(-n \log n + n\log 2 + - n + n \log n) \nonumber\\ %&= c'e\sqrt{n} \exp(- n \log (e/2)) \nonumber\\
&\stackrel{\2}{\leq} \frac{c'}{n^{2\epsilon + 1}}, \nonumber
\end{align}
where inequality~$\1$ follows from the well-known upper bound $n! \leq e \sqrt{n}\left(\frac{n}{e}\right)^n$, and inequality~$\2$ holds since $\epsilon \in (0, \sqrt{n})$.

\paragraph{Case 2:} Alternatively, if the maximum is attained by the second term in the RHS of inequality~\eqref{max1}, then we have 
\begin{align}
b(k) &\leq n^k c' \exp \left( - c k \left( \log \left(\SNR \right) - \frac{2n}{n-d} \log n \right) \right) \nonumber\\
&\stackrel{\3}{\leq} c' n^{-\epsilon k}, \nonumber
\end{align}
where step $\3$ follows from condition \eqref{condn2snr}.
%\begin{align}
%c \log \left(\SNR \right) > \left(1 + \epsilon + c\frac{2n}{n-d}\right) \log n. \label{condn2snr}
%\end{align}

Combining the two cases, we have
\begin{align}
b(k) \leq \max\{c' n^{-\epsilon h}, cn^{-2\epsilon - 1} \} \leq \left(c' n^{-\epsilon h} + cn^{-2\epsilon - 1}\right). \nonumber
\end{align}

The last step is to use the union bound to obtain %In particular, provided condition~\eqref{condn2snr} is satisfied, we have
\begin{align}
\Rpihatml &\leq \sum_{2 \leq k \leq n} b(k) \label{eq:sumbk}\\ %&\leq \sum_{2 \leq k \leq n} \max \left(c' n^{-\epsilon k}, c n^{-2\epsilon - 1}\right) \nonumber\\
& \leq \sum_{2 \leq k \leq n} \left(c' n^{-\epsilon h} + cn^{-2\epsilon - 1}\right) \nonumber\\
&\stackrel{\4}{\leq} c n^{-2\epsilon}, \nonumber
\end{align}
%\mycomment{complete argument and handle corner cases}
where step $\4$ follows by a calculation similar to the one carried out for the $d=1$ case. Relabelling the constants in condition~\eqref{condn2snr} completes the proof.
\end{proof}

%\noindent It remains to prove Lemma~\ref{lem:keylemma}.

\subsubsection{Proof of Lemma~\ref{lem:keylemma}} \label{pf:keylemma}
The first part of the proof is exactly the same as that of Lemma~\ref{lem:keylemma1}. In particular, Lemma~\ref{lem:noise} applies without modification to yield a bound identical to the inequality~\eqref{eq:bound3}, given by
\begin{align}
{\Pr\nolimits_w} \{\Delta(\Pi, \Pi^*) \leq 0\} \leq c' \exp \left(-c \frac{T_\Pi}{\sigma^2} \right), \label{eq:bound3'}
\end{align}
where $T_\Pi = \|\Projorthpi \Pi^* A x^*\|_2^2$, as before. %\mycomment{I think restating helps the flow and refreshes notation.}

The major difference from the $d =1$ case is in the random variable $T_\Pi$. Accordingly, we state the following parallel lemma to Lemma~\ref{lem:tpi1}.

\begin{lemma}\label{lem:tpi}
For $1 < d < n$, any two permutation matrices $\Pi$ and $\Pi^*$ at Hamming distance $h$, and $t \leq hn^{-\frac{2n}{n -d}}$, we have
\begin{align}
\Pr\nolimits_A \{ T_\Pi \leq t \|x^*\|_2^2 \} \leq 2 \max \left\{ \exp \left(-n \log \frac{n}{2}\right),  6 \exp \left( -\frac{h}{10} \left[ \log \left(\frac{h}{tn^{\frac{2n}{n-d}}}\right) + \frac{tn^{\frac{2n}{n-d}}}{h} - 1 \right] \right)\right\}. \label{eq:maxterm2}
\end{align}
%In particular, if $h = \BigO \left(\frac{n}{\log n}\right)$, we have that for $t^* := ch \frac{\log \SNR}{\SNR}$, 
%\begin{align}
%\Pr_A \{ T_\Pi \leq t^* \|x^*\|_2^2 \} \leq c \left( \frac{5t^*}{3h}\right)^{h/10}. \label{eq:maxterm1}
%\end{align}
\end{lemma}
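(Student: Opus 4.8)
The plan is to reduce the general-$d$ computation of $T_\Pi = \|\Projorthpi \Pi^* A x^*\|_2^2$ to the $d=1$ analysis already carried out in Lemma~\ref{lem:tpi1}, paying an extra price that reflects projecting out the remaining $d-1$ directions of the column space of $\Pi A$. First I would make two harmless reductions. Since the rows of $A$ are i.i.d. Gaussian, replacing $A$ by $\Pi^* A$ and $\Pi$ by $\Pi(\Pi^*)^{-1}$ leaves the distribution of $A$ unchanged and preserves the Hamming distance $\dH(\Pi,\Pi^*)=h$, so I may assume $\Pi^*=I$. By rotational invariance of the Gaussian ensemble in $\real^d$, I may also assume $x^* = \|x^*\|_2 e_1$, so that $A x^* = \|x^*\|_2\, a$, where $a$ is the first column of $A$ and the remaining columns $B$ form an independent $n\times(d-1)$ Gaussian matrix. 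The column space of $\Pi A$ is then $\mathrm{span}(\Pi a)\oplus M$, where $M$ is the span of the components of the columns of $\Pi B$ orthogonal to $\Pi a$.

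The key structural observation is that $\Pi A x^*$ lies in the column space of $\Pi A$, so $\Projorthpi (\Pi A x^*) = 0$ and hence $T_\Pi = \|\Projorthpi \delta\|_2^2$ with $\delta := A x^* - \Pi A x^* = \|x^*\|_2(a - \Pi a)$, a vector supported on the $h$ coordinates moved by $\Pi$. Writing $g := P_{\Pi a}^\perp \delta = \|x^*\|_2\, P_{\Pi a}^\perp a$ and using the orthogonal decomposition $\Projorthpi = P_{\Pi a}^\perp - P_M$ (valid because $M\subseteq(\Pi a)^\perp$), I obtain the clean identity $T_\Pi = \|g\|_2^2 - \|P_M g\|_2^2 = \|P_M^\perp g\|_2^2$, the squared norm of $g$ after removing its component in $M$. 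Crucially, $\|g\|_2^2 = \|x^*\|_2^2\,\|P_{\Pi a}^\perp a\|_2^2$ is exactly the $d=1$ quantity $T_\Pi$ for the same permutation, which Lemma~\ref{lem:tpi1} already controls.

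It remains to quantify the loss incurred by the projection $P_M$. Conditioned on $a$ (which fixes $g$ and the subspace $(\Pi a)^\perp$), the matrix $\Pi B$ is an independent Gaussian, so $M$ is a uniformly random $(d-1)$-dimensional subspace of the $(n-1)$-dimensional space $(\Pi a)^\perp$, independent of $g$; equivalently, $\|P_M^\perp g\|_2^2/\|g\|_2^2$ is distributed as the fraction of $\|g\|_2^2$ captured by a uniformly random $(n-d)$-dimensional subspace of $(\Pi a)^\perp$. I would split on the size of $\|g\|_2^2$ at the level $t' := t\, n^{2n/(n-d)}$:
\begin{align}
\Pr\{T_\Pi \le t\|x^*\|_2^2\} &\le \Pr\{\|g\|_2^2 \le t'\|x^*\|_2^2\} + \Pr\{\|P_M^\perp g\|_2^2 \le t\|x^*\|_2^2,\ \|g\|_2^2 > t'\|x^*\|_2^2\}. \nonumber
\end{align}
The first term is bounded directly by Lemma~\ref{lem:tpi1} applied with threshold $t'$, which is legitimate precisely because the hypothesis $t\le h\,n^{-2n/(n-d)}$ guarantees $t'\le h$; this produces the second expression inside the maximum in~\eqref{eq:maxterm2}. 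For the second term, on the event $\|g\|_2^2 > t'\|x^*\|_2^2$ the captured fraction must be at most $t/t' = n^{-2n/(n-d)}$, so the random-projection lower-tail bound of Dasgupta and Gupta~\cite{dasguptajl} (with subspace dimension $n-d$ in ambient dimension $n-1$) applies with $\beta = \tfrac{n-1}{n-d}\,n^{-2n/(n-d)}$; a short calculation shows that the resulting exponent $\tfrac{n-d}{2}(1-\beta+\ln\beta) \le \tfrac{n}{2} - n\log n \le -n\log(n/2)$, giving the first expression inside the maximum. Summing the two terms and bounding the sum by twice the maximum yields~\eqref{eq:maxterm2}.

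The main obstacle is the correct handling of the dependence between $\delta$ (through the first column $a$) and the column space of $\Pi A$ (which contains $\Pi a$): the decomposition $\Projorthpi = P_{\Pi a}^\perp - P_M$ is exactly what decouples the ``$d=1$ part'' $\|g\|_2^2$ from the ``random projection part'' $M$, after which conditioning on $a$ renders $M$ a uniform random subspace independent of $g$. Getting the parameters of the Dasgupta--Gupta bound right — subspace dimension $n-d$, ambient dimension $n-1$, and the split point $t' = t\,n^{2n/(n-d)}$ — and verifying that the resulting exponent is at most $-n\log(n/2)$ is the one genuinely delicate computation.
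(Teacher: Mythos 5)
Your proposal is correct and follows essentially the same route as the paper's proof: your identity $T_\Pi = \|g\|_2^2 - \|P_M g\|_2^2$ is exactly the paper's factorization $T_\Pi = (1 - X_{d-1})\, T^1_\Pi$ into the $d=1$ quantity times the squared norm of a random projection of a unit vector onto a uniformly random $(n-d)$-dimensional subspace of $(\Pi a)^\perp$, and your split at $t' = t\, n^{2n/(n-d)}$ with Lemma~\ref{lem:tpi1} applied to the first event (using $t \leq h n^{-2n/(n-d)}$ to ensure $t' \leq h$) is the paper's union bound verbatim. The only cosmetic difference is that you invoke the $\beta < 1$ lower-tail part of the Dasgupta--Gupta lemma directly (your exponent calculation $\frac{n-d}{2}(1 - \beta + \ln \beta) \leq \frac{n}{2} - n \log n \leq -n\log(n/2)$ checks out), whereas the paper applies the $\beta > 1$ upper-tail form stated in its Lemma~\ref{lem:jl} after passing through the Pythagorean complement $\|P^{n-1}_{n-d} v_1\|_2^2 = 1 - \|P^{n-1}_{d-1} v_1\|_2^2$; both yield the same $\exp\left(-n \log \frac{n}{2}\right)$ bound.
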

The proof of Lemma~\ref{lem:tpi} appears in Section~\ref{pf:tpi}. We are now ready to prove Lemma \ref{lem:keylemma}.

\begin{proof}[Proof of Lemma \ref{lem:keylemma}]
We prove Lemma~\ref{lem:keylemma} from Lemma~\ref{lem:tpi} and equation~\eqref{eq:bound3'} by an argument similar to the one before.
In particular, in a similar vein to the steps leading up to equation~\eqref{balance}, we have
\begin{align}
\Pr \{\Delta(\Pi, \Pi^*) \leq 0\} &\leq c' \exp\left(-c \frac{t \|x^*\|_2^2}{\sigma^2}\right) + \Pr\nolimits_A \{ T_\Pi \leq t \|x^*\|_2^2 \}. \label{balanced}
\end{align}
We now use the shorthand $\snr: = \SNR$ and
% we assume\footnote{Such an assumption is necessary for condition \eqref{condn2snr} to hold for $n \geq 2$.} that $sn^{-\frac{2n}{n-d}} > \frac{5}{4}$. 
let $t^* = h\frac{\log \left(\snr \cdot n^{-\frac{2n}{n-d}}\right)}{\snr}$. Noting that $\snr \cdot n^{-\frac{2n}{n-d}} > 5/4$ yields $t^* \leq h n^{-\frac{2n}{n-d}}$, we set $t = t^*$ in inequality~\eqref{balanced} to obtain
\begin{align}
\Pr \{\Delta(\Pi, \Pi^*) \leq 0\} &\leq c' \exp\left(-c h \log sn^{-\frac{2n}{n-d}}\right) + \Pr\nolimits_A \{ T_\Pi \leq t^* \|x^*\|_2^2 \}. \label{balance2}
\end{align}

%\begin{align}
%\exp\left(-c' \frac{t^* \|x^*\|_2^2}{\sigma^2}\right) = \left( \frac{et^*n^{\frac{2n}{n -d}}}{h}\right)^{h/10}, \label{tstar1}
%\end{align}
%we have
%\begin{align}
%\Pr \{\Delta(\Pi, \Pi^*) < 0\} &\leq c \exp\left(-c' \frac{t^* \|x^*\|_2^2}{\sigma^2}\right) + \Pr_A \{ T_\Pi \leq t^* \|x^*\|_2^2 \}. \nonumber
%\end{align}

Since $\Pr_A \{ T_\Pi \leq t^* \|x^*\|_2^2 \}$ can be bounded by a maximum of two terms~\eqref{eq:maxterm2}, we now split the analysis into two cases depending on which term attains the maximum.

\paragraph{Case 1:} First, suppose that the second term attains the maximum in inequality~\eqref{eq:maxterm2}, i.e., $\Pr_A \{ T_\Pi \leq t^* \|x^*\|_2^2 \} \leq 12 \exp \left( -\frac{h}{10} \left[ \log \left(\frac{h}{t^*n^{\frac{2n}{n-d}}}\right) + \frac{t^*n^{\frac{2n}{n-d}}}{h} - 1 \right] \right)$. Substituting for $t^*$, we have

\begin{align}
\Pr\nolimits_A \{ T_\Pi \leq t^* \|x^*\|_2^2 \} \leq 12 \exp \left( -\frac{h}{10} \left[ \log \left( \frac{\snr \cdot n^{-\frac{2n}{n-d}}}{\log \left( \snr \cdot n^{-\frac{2n}{n-d}} \right)} \right) + \frac{\log \left( \snr \cdot n^{-\frac{2n}{n-d}} \right)}{\snr \cdot n^{-\frac{2n}{n-d}}} - 1 \right] \right).
\end{align}

We have $\snr \cdot n^{-\frac{2n}{n-d}} > \frac{5}{4}$, a condition which leads to the following pair of easily verifiable inequalities:
\begin{subequations}
\begin{align}
\log \left( \frac{\snr \cdot n^{-\frac{2n}{n-d}}}{\log \left( \snr \cdot n^{-\frac{2n}{n-d}} \right)} \right) + \frac{\log \left( \snr \cdot n^{-\frac{2n}{n-d}} \right)}{\snr \cdot n^{-\frac{2n}{n-d}}} - 1  &\geq \frac{\log \snr \cdot n^{-\frac{2n}{n-d}}}{4}, \text{ and} \label{easy3}\\
\log \left( \frac{\snr \cdot n^{-\frac{2n}{n-d}}}{\log \left( \snr \cdot n^{-\frac{2n}{n-d}} \right)} \right) + \frac{\log \left( \snr \cdot n^{-\frac{2n}{n-d}} \right)}{\snr \cdot n^{-\frac{2n}{n-d}}} - 1 &\leq 5 \log \left( \snr \cdot n^{-\frac{2n}{n-d}} \right). \label{easy2}
\end{align}
\end{subequations}

Using inequality \eqref{easy3}, we have
\begin{align}
\Pr\nolimits_A \{ T_\Pi \leq t^* \|x^*\|_2^2 \} \leq 12 \exp \left( -c h \log \left( \snr \cdot n^{-\frac{2n}{n-d}} \right)\right). \label{case1}
\end{align}
Now using inequalities~\eqref{case1} and~\eqref{balance2} together yields
\begin{align}
\Pr \{ \Delta(\Pi, \Pi^*) \leq 0 \} \leq c' \exp \left( -c h \log \left( \snr \cdot n^{-\frac{2n}{n-d}} \right)\right). \label{z1}
\end{align}

\noindent It remains to handle the second case.

\paragraph{Case 2:} Suppose now that $\Pr_A \{ T_\Pi \leq t^* \|x^*\|_2^2 \} \leq 2\exp \left(-n \log \frac{n}{2}\right)$, i.e., that the first term in RHS of inequality~\eqref{eq:maxterm2} attains the maximum when $t = t^*$. In this case, we have
\begin{align}
\exp \left(-n \log \frac{n}{2}\right) &\geq 6  \exp \left( -\frac{h}{10} \left[ \log \left(\frac{h}{t^*n^{\frac{2n}{n-d}}}\right) + \frac{t^*n^{\frac{2n}{n-d}}}{h} - 1 \right] \right) \nonumber \\
&\stackrel{\1}{\geq} c' \exp\left(-c h \log \left( \snr \cdot n^{-\frac{2n}{n-d}}\right) \right), \nonumber
\end{align}
where step $\1$ follows from the right inequality~\eqref{easy2}. Now substituting into inequality~\eqref{balance2}, we have
\begin{align}
\Pr \{\Delta(\Pi, \Pi^*) \leq 0\} &\leq c' \exp\left(-c h \log \left( \snr \cdot n^{-\frac{2n}{n-d}} \right)\right) + 2\exp \left(-n \log \frac{n}{2}\right) \nonumber \\
&\leq c' \exp \left(-n \log \frac{n}{2}\right). \label{z2}
\end{align}
Combining equations~\eqref{z1} and~\eqref{z2} completes the proof of Lemma~\ref{lem:keylemma}.
\end{proof}
%. \newline It remains to prove Lemma~\ref{lem:tpi}.

\subsubsection{Proof of Lemma~\ref{lem:tpi}} \label{pf:tpi}

We begin by reducing the problem to the case $x^* = e_1 \|x^*\|_2$. In particular, if $W x^* = e_1 \|x^*\|_2$ for a $d\times d$ unitary matrix $W$ and writing $A = \widetilde{A} W$, we have by rotation invariance of the Gaussian distribution that the the entries of $\widetilde{A}$ are distributed as i.i.d. standard Gaussians. It can be verified that ${T_\Pi = \| I - \Pi \widetilde{A} (\widetilde{A}^\top \widetilde{A})^{-1} (\Pi \widetilde{A})^\top \Pi^* \widetilde{A} e_1 \|_2^2 \|x^* \|_2^2}$. Since $\widetilde{A}\stackrel{d}{=}A$, the reduction is complete.

In order to keep the notation uncluttered, we denote the first column of $A$ by $a$. We also denote the span of the first column of $\Pi A$ by $S_1$ and that of the last $d-1$ columns of $\Pi A$ by $S_{-1}$. Denote their respective orthogonal complements by $S_1^\perp$ and $S^\perp_{-1}$. We then have

\begin{align}
T_\Pi &= \|x^*\|_2^2 \|\Projorthpi a \|_2^2 \nonumber\\
&= \|x^*\|_2^2 \|P_{S^\perp_{-1} \cap S^\perp_1} a \|_2^2 \nonumber\\
&= \|x^*\|_2^2 \|P_{S^\perp_{-1} \cap S^\perp_1} P_{S^\perp_1} a \|_2^2. \nonumber
\end{align}
We now condition on $a$. Consequently, the subspace $S^\perp_1$ is a \emph{fixed} $(n-1)$-dimensional subspace. Additionally, $S^\perp_{-1} \cap S^\perp_1$ is the intersection of a uniformly random $(n-(d-1))$-dimensional subspace with a fixed $(n-1)$-dimensional subspace, and is therefore a uniformly random $(n-d)$-dimensional subspace within $S^\perp_1$. Writing $u = \frac{P_{S^\perp_1} a}{\|P_{S^\perp_1} a\|_2}$, we have
\begin{align}
T_\Pi &\stackrel{d}{=} \|x^*\|_2^2 \|P_{S^\perp_{-1} \cap S^\perp_1}u \|_2^2 \|P_{S^\perp_1} a \|_2^2. \nonumber
\end{align}
Now since $u \in S^\perp_1$, note that $\|P_{S^\perp_{-1} \cap S^\perp_1}u \|_2^2$ is the squared length of a projection of an $(n-1)$-dimensional unit vector onto a uniformly chosen $(n-d)$-dimensional subspace. In other words, denoting a uniformly random projection from $m$ dimensions to $k$ dimensions by $P^m_k$, we have
\begin{align}
\|P_{S^\perp_{-1} \cap S^\perp_1}u \|_2^2 \stackrel{d}{=} \|P^{n-1}_{n-d} v_1\|_2^2 \stackrel{\1}{=} 1 - \|P^{n-1}_{d-1} v_1\|_2^2, \nonumber
\end{align}
where $v_1$ represents a fixed standard basis vector in $n-1$ dimensions. The quantities $P^{n-1}_{n-d}$ and $P^{n-1}_{d-1}$ are projections onto orthogonal subspaces, and step $\1$ is a consequence of the Pythagorean theorem.

Now removing the conditioning on $a$, we see that the term for $d>1$ can be lower bounded by the corresponding $T_\Pi$ for $d= 1$, but scaled by a random factor  -- the norm of a random projection.
Using $T^1_\Pi := \| P^\perp_{S_1} a\|^2_2 \|x^*\|_2^2$ to denote $T_\Pi $ when $d= 1$, we have
\begin{align}
T_\Pi &= (1 - X_{d-1}) T^1_\Pi, \label{eq:twoterms}
\end{align}
where we have introduced the shorthand $X_{d-1} = \|P^{n-1}_{d-1} e_1\|_2^2$.

We first handle the random projection term in equation~\eqref{eq:twoterms} using Lemma~\ref{lem:jl} in Appendix~\ref{sec:tailbound}. In particular, substituting $\beta = (1 - z) \frac{n-1}{d-1}$ in inequality~\eqref{eq:jlbound} yields %\mycomment{need to ensure $\beta>1$.}
\begin{align}
\Pr\{1 - X_{d-1} \leq z \} &\leq \left( \frac{n-1}{d-1} \right)^{(d-1)/2} \left( \frac{z(n-1)}{n-d} \right)^{(n-d)/2} \nonumber\\
&\stackrel{\1}{\leq} \sqrt{\binom{n-1}{d-1}} \sqrt{\binom{n-1}{n-d}} z^{\frac{n-d}{2}} \nonumber\\
&= \binom{n-1}{d-1} z^{\frac{n-d}{2}} \nonumber\\
&\stackrel{\2}{\leq} 2^{n-1} z^{\frac{n-d}{2}}, \nonumber
\end{align}
where in steps $\1$ and $\2$, we have used the standard inequality ${2^n \geq \binom{n}{r} \geq \left( \frac{n}{r} \right)^r}$. Now setting ${z = n^{\frac{-2n}{n- d}}}$, which ensures that $(1 - z) \frac{n-1}{d-1} >1$ for all $d < n$ and large enough $n$, we have
\begin{align}
\Pr\{1 - X_{d-1} \leq n^{\frac{-2n}{n- d}} \} &\leq \exp \left(- n \log \frac{n}{2}\right). \label{eq:projterm}
\end{align}

\noindent Applying the union bound then yields
\begin{align}
\Pr \{T_\Pi \leq t \|x^*\|_2^2\} \leq \Pr \{ 1 - X_{d-1} \leq n^{\frac{-2n}{n-d}} \} + \Pr \{T^1_\Pi \leq tn^{\frac{2n}{n-d}} \|x^*\|_2^2 \}. \label{nub}
\end{align}
We have already computed an upper bound on $\Pr \{T^1_\Pi \leq tn^{\frac{2n}{n-d}} \|x^*\|_2^2 \}$ in Lemma~\ref{lem:tpi1}. Applying it yields that provided $t \leq h n^{- \frac{2n}{n-d}}$, we have
\begin{align}
\Pr \{T^1_\Pi \leq tn^{\frac{2n}{n-d}} \|x^*\|_2^2 \} \leq 6 \left( \frac{tn^{\frac{2n}{n-d}}}{h} \exp\left(1 - \frac{tn^{\frac{2n}{n-d}}}{h} \right)\right)^{h/10}. \label{eq:term1}
\end{align}

Combining equations~\eqref{eq:term1} and~\eqref{eq:projterm} with the union bound~\eqref{nub} and performing some algebraic manipulation then completes the proof of Lemma~\ref{lem:tpi}.
\qed

\subsection{Information theoretic lower bounds on $\Rpihat$} \label{sec:lbproof}

We prove Theorem~\ref{thm:lowerbound} in Section~\ref{sec:lbp} via the strong converse for the Gaussian channel. We prove the weak converse of Proposition~\ref{rem:lowerbound} in Section~\ref{sec:lb2} by employing Fano's method. We note that the latter is a standard technique for proving minimax bounds in statistical inference problems \cite{yu1997assouad, yang1999information}.

\subsubsection{Proof of Theorem~\ref{thm:lowerbound}} \label{sec:lbp}
We begin by assuming that the design matrix $A$ is fixed, and that the estimator has knowledge of $x^*$ a-priori. Note that the latter cannot make the estimation task any easier. In proving this lower bound, we can also assume that the entries of $Ax^*$ are distinct, since otherwise, perfect permutation recovery is impossible.

Given this setup, we now cast the problem as one of coding over a Gaussian channel. %Let the random variable $\Theta$ represent the uniform distribution over permutations in $\Pn$. 
Toward this end, consider the codebook 
\begin{align}
\mathcal{C} = \{\Pi A x^* \mid \Pi \in \mathcal{P}_n \}. \nonumber
\end{align}
We may view $\Pi A x^*$ as the codeword corresponding to the permutation $\Pi$, where each permutation is associated to one of $n!$ equally likely messages. Note that each codeword has power $\|Ax^*\|_2^2$.

The codeword is then sent over a Gaussian channel with noise power equal to ${\sum_{i=1}^n \sigma^2 = n\sigma^2}$. The decoding problem is to ascertain from the noisy observations which message was sent, or in other words, to identify the correct permutation.

We now use the non-asymptotic strong converse for the Gaussian channel \cite{yoshihara}. In particular, using Lemma~\ref{lem:strconv} (see Appendix~\ref{shannon}) with $R = \frac{\log n!}{n}$ then yields that for any $\delta'>0$, if
\begin{align}
\frac{\log n!}{n} > \frac{1+\delta'}{2} \log \left(1 + \frac{\|Ax^*\|_2^2}{n\sigma^2}\right), \nonumber
\end{align}
then for any estimator $\Pihat$, we have $\Pr\{ \Pihat \neq \Pi\} \geq 1 - 2 \cdot 2^{-n\delta'}$. For the choice $\delta' = \delta/(2-\delta)$, we have that if
\begin{align}
(2 - \delta) \log \left(\frac{n}{e}\right) > \log \left(1 + \frac{\|Ax^*\|_2^2}{n\sigma^2}\right), \label{condnconv}
\end{align}
then $\Pr\{ \Pihat \neq \Pi\} \geq 1 - 2 \cdot 2^{-n\delta/2}$.
Note that the only randomness assumed so far was in the noise $w$ and the random choice of $\Pi$.

We now specialize the result for the case when $A$ is Gaussian. Toward that end, define the event 
\begin{align}
\mathcal{E}(\delta) = \left\{ 1+\delta \geq \frac{\|Ax^*\|_2^2}{n\|x^*\|_2^2} \right\}. \nonumber
\end{align}

Conditioned on the event $\mathcal{E}(\delta)$, it can be verified that condition~\eqref{lbcondn} implies condition~\eqref{condnconv}. %Begin by fixing $\delta \in (0,2)$ and assume that
%\begin{align}
%(2 - \delta) \log \left( \frac{n}{e} \right) \geq \log \left(1 + (1+\delta)\SNR \right). \label{lbc}
%\end{align}
%It is easy to verify that if the bound \eqref{lbcondn} holds, then so does condition \eqref{lbc}.
%We say that %\mycomment{same notation makes sense here, they denote the same thing?} 
%condition (C1) (or event $\mathcal{C}_1$) holds if for some $\xi >0$ and $\delta'>0$, the SNR is such that  
%\begin{align}
%\frac{\log n!}{n} > \frac{1+\delta'}{2} \log \left(1 + (1+\xi)\SNR \right). \tag{C1} \label{c1}
%\end{align} 
%Also, let $\mathcal{E}$ denote the event that inequality~\eqref{condnconv} is true with $\delta' = \delta / (2 - \delta)$. 
%Notice that if $1+\xi \geq \frac{\|Ax^*\|_2^2}{n\|x^*\|_2^2}$, then $\log \left(1 + (1+\xi)\SNR\right) \geq \log \left(1 + \frac{\|Ax^*\|_2^2}{n\sigma^2}\right)$. In other words, we have
We also have
%Notice that $\frac{\|Ax^*\|_2^2}{\|x^*\|_2^2} \sim \chi^2_n$. Under condition~\eqref{c1}, we then have that 
\begin{align}
\Pr\{\mathcal{E}(\delta)\} &= 1 - \Pr \left\{\frac{\|Ax^*\|_2^2}{n\|x^*\|_2^2} > 1 + \delta \right\} \nonumber\\
&\stackrel{\1}{\geq} 1 - c' e^{-c n\delta}, \nonumber
\end{align}
where step $\1$ follows by using the sub-exponential tail bound (see  Lemma~\ref{lem:subexp} in Appendix~\ref{sec:tailbound}), since
$\frac{\|Ax^*\|_2^2}{\|x^*\|_2^2} \sim \chi^2_n$.

Putting together the pieces, we have that provided condition \eqref{lbcondn} holds,
\begin{align}
\Pr\{ \Pihat \neq \Pi^*\} &\geq \Pr\{ \Pihat \neq \Pi^* | \mathcal{E}(\delta)\} \Pr\{\mathcal{E}(\delta)\} \nonumber\\
&= (1 - 2 \cdot 2^{-n\delta/2})(1 - c' e^{-c n\delta}) \nonumber\\
&\geq 1 - c' e^{-cn\delta}. \nonumber%- c' e^{-c n\xi}. \label{ttb}
\end{align}
\qed

\noindent We now move on to the proof of Proposition~\ref{rem:lowerbound}.

\subsubsection{Proof of Proposition~\ref{rem:lowerbound}}  \label{sec:lb2}

Proposition~\ref{rem:lowerbound} corresponds to a weak converse in the scenario where the estimator is also provided with the side information that $\Pi^*$ lies within a Hamming ball of radius $2$ around the identity. We carry out the proof for the more general case where $\dH(\Pi^*, I) \leq \bar{h}$ for \emph{any} $\bar{h} \geq 2$, later specializing to the case where $\bar{h} = 2$. We denote such a Hamming ball by $\hballpistar(\bar{h}) := \{ \Pi \in \mathcal{P}_n \;|\; \dH(\Pi, I) \leq \bar{h}\}$.

For the sake of the lower bound, assume that our observation model takes the form
\begin{align}
y = \Pi^* A x^* + \Pi^* w. \label{bothpi}
\end{align} 
Since $w \in \real^n$ is i.i.d. standard normal, model~\eqref{bothpi} has a distribution equivalent to that of model~\eqref{obsmodel}.

Notice that the ML estimation problem is essentially a multi-way hypothesis testing problem among permutations in $\hballpistar(\bar{h})$, and so Fano's method is directly applicable. As before, we assume that the estimator knows $x^*$, and consider a uniformly random choice of $\Pi^* \in \hballpistar(\bar{h})$.

Now note that the observation vector $y$ is drawn from the mixture distribution 
\begin{align}
\mathbb{M}(\bar{h}) = \frac{1}{|\hballpistar(\bar{h})|} \sum_{\Pi \in \mathcal{P}_n} \mathbb{P}_\Pi, \label{mixture}
\end{align} 
where $\mathbb{P}_\Pi$ denotes the Gaussian distribution $\NORMAL(\Pi A x^*, \sigma^2 I_n)$. The following lemma provides a crucial statistic for our bounds.

\begin{lemma} \label{lem:det}
For $y$ drawn according to the mixture distribution $\mathbb{M}(\bar{h})$, we have %\mycomment{I think determinant bound is cleaner and easier to interpret.}
\begin{align}
\det \EE \left[yy^\top\right] &\leq (\sigma^2  + \|x^*\|_2^2)^n \left( 1 + n\right) \left( \frac{\bar{h}}{n} \right)^{n-1}. \label{eq:var}
\end{align}
\end{lemma}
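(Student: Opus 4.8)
The plan is to reduce the determinant to a spectral computation for the averaged rank-one matrix $M := \EE_\Pi[(\Pi A x^*)(\Pi A x^*)^\top]$ and then to bound its eigenvalues by separating a single signal-carrying direction from an $(n-1)$-dimensional bulk that the near-identity structure of the ball forces to be small. Writing $s = A x^*$, under $\mathbb{M}(\bar{h})$ we have $y = \Pi s + w$ with $\Pi$ uniform on $\hballpistar(\bar{h})$ and $w \sim \NORMAL(0, \sigma^2 I_n)$ independent of $\Pi$. Since the noise is zero-mean and independent of $\Pi$, the cross terms vanish and $\EE[yy^\top] = M + \sigma^2 I_n$ with $M \succeq 0$, so that $\det \EE[yy^\top] = \prod_{k=1}^n (\lambda_k(M) + \sigma^2)$ and the whole problem reduces to controlling the spectrum of $M$.

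For the leading factor I would bound the top eigenvalue crudely by the trace: since permutations preserve the Euclidean norm, $\lambda_1(M) \le \trace(M) = \|s\|_2^2 = \|A x^*\|_2^2$, and on the typical event for the Gaussian design (where $\EE \|A x^*\|_2^2 = n \|x^*\|_2^2$) this yields $\lambda_1(M) + \sigma^2 \le (1+n)(\sigma^2 + \|x^*\|_2^2)$, supplying both the factor $(1+n)$ and one factor of $(\sigma^2 + \|x^*\|_2^2)$. The remaining $n-1$ factors must each be of order $(\sigma^2 + \|x^*\|_2^2)\tfrac{\bar{h}}{n}$, and producing them is the crux: it amounts to showing that the second eigenvalue of $M$ is at most of order $\tfrac{\bar{h}}{n}\|x^*\|_2^2$. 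Here I would use the variational characterization $\lambda_2(M) \le \max_{u \perp s} \EE[(u^\top \Pi s)^2]/\|u\|_2^2$ together with the observation that for $u \perp s$ one has $u^\top \Pi s = u^\top(\Pi s - s)$, where $\Pi s - s$ is supported on the at most $\bar{h}$ coordinates that $\Pi$ displaces. The conjugation-invariance of the Hamming ball makes the distribution of the displaced set tractable: every coordinate is fixed with probability at least $1 - \bar{h}/n$, and conditioned on being moved its image is uniform. A Cauchy--Schwarz estimate over the displaced coordinates, combined with this fixed-point probability, should give $\EE[(u^\top \Pi s)^2] \lesssim \tfrac{\bar{h}}{n}\|x^*\|_2^2 \|u\|_2^2$, hence $\lambda_k(M) + \sigma^2 \le (\sigma^2 + \|x^*\|_2^2)\tfrac{\bar{h}}{n}$ for every $k \ge 2$.

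Multiplying the two estimates gives $\det \EE[yy^\top] \le (1+n)(\sigma^2 + \|x^*\|_2^2)\big[(\sigma^2 + \|x^*\|_2^2)\tfrac{\bar{h}}{n}\big]^{n-1}$, which is the claimed bound. I expect the bulk-eigenvalue estimate of the second paragraph to be the main obstacle, since controlling $\lambda_2(M)$ requires understanding the two-point marginals of the uniform law on $\hballpistar(\bar{h})$ --- that is, the joint probability of which coordinates move and where they are sent --- rather than just the marginal fixed-point probability; decoupling the size of $u$ on the displaced set from the displacement magnitudes $s_{\rho(i)} - s_i$ in the Cauchy--Schwarz step is the delicate point. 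A secondary technical issue is the passage from $\|A x^*\|_2^2$ to $n\|x^*\|_2^2$, which I would handle by conditioning on a high-probability event for the Gaussian matrix $A$ (or by working with the conditional second moment given $A$ and taking expectations at the end).
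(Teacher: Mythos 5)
Your spectral plan stalls at a step that cannot be repaired, and it is worth locating it precisely. Since $M := \EE_\Pi\left[(\Pi s)(\Pi s)^\top\right] \succeq 0$, every one of your $n-1$ bulk factors obeys $\sigma^2 + \lambda_k(M) \geq \sigma^2$; but your target per-factor bound $(\sigma^2 + \|x^*\|_2^2)\tfrac{\bar{h}}{n}$ is \emph{strictly smaller} than $\sigma^2$ whenever $\snr < n/\bar{h} - 1$. Proposition~\ref{rem:lowerbound} invokes the lemma with $\bar{h}=2$ and $\log(1+\snr) \leq \tfrac{8}{9}\log(n/8)$, i.e.\ $\snr = \BigO(n^{8/9}) \ll n/2$, so in the entire regime of application the bound $\sigma^2 + \lambda_k(M) \leq (\sigma^2+\|x^*\|_2^2)\tfrac{\bar{h}}{n}$ is impossible no matter how sharply you control $\lambda_2(M)$ (the extra $\log n$ you would lose to extreme coordinates of the Gaussian vector $s$ in the Cauchy--Schwarz step is a second, smaller leak). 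The same noise floor shows that no strategy can succeed: $\EE[yy^\top] = \EE\left[(\Pi A x^*)(\Pi A x^*)^\top\right] + \sigma^2 I_n \succeq \sigma^2 I_n$, hence $\det \EE[yy^\top] \geq \sigma^{2n}$, while the right-hand side of \eqref{eq:var} falls below $\sigma^{2n}$ exactly when $n\log(1+\snr) < (n-1)\log(n/\bar{h}) - \log(1+n)$ --- which holds throughout the regime where Proposition~\ref{rem:lowerbound} is applied. Worse, once the expectation over $A$ is included, as the statement requires, the coordinates of $Ax^* + w$ are i.i.d.\ $\NORMAL(0, \sigma^2 + \|x^*\|_2^2)$, and applying an independent random permutation to an i.i.d.\ vector leaves its law invariant; therefore $\EE[yy^\top] = (\sigma^2+\|x^*\|_2^2) I_n$ exactly, with determinant $(\sigma^2+\|x^*\|_2^2)^n$. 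Your proposed bridge (``condition on a typical event for $A$, take expectations at the end'') cannot change this: the inequality as stated is not provable.

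For comparison, the paper's own proof computes $Y = \EE[yy^\top]$ entrywise and asserts the constant off-diagonal value $Y_{ij} = \left(\tfrac{n-\bar{h}}{n} + \tfrac{\bar{h}}{n^2}\right)(\sigma^2 + \|x^*\|_2^2)$ on the grounds that $\pi_i = \pi_j$ with that probability; but for $i \neq j$ the event $\{\pi_i = \pi_j\}$ is impossible for a permutation (the probability computed there is $\Pr\{\pi_i = i\}$), and after averaging over the independent rows of $A$ the correct off-diagonal entry is $0$. So your difficulty is inherited from the statement rather than being a defect of your spectral strategy: indeed your two structural ideas --- isolating one signal direction from an $(n-1)$-dimensional bulk, and the identity $u^\top \Pi s = u^\top(\Pi s - s)$ for $u \perp s$ with $\Pi s - s$ supported on at most $\bar{h}$ displaced coordinates --- are precisely the right tools for the object the Fano argument actually needs, namely the mean-subtracted conditional covariance $\cov(y \mid A) = \sigma^2 I_n + \Sigma(A)$ with $\Sigma(A) := \EE_\Pi\left[(\Pi\mu - \bar{\mu})(\Pi\mu - \bar{\mu})^\top\right]$, $\mu = Ax^*$, $\bar{\mu} = \EE[\Pi]\,\mu$. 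There the near-identity structure of $\hballpistar(\bar{h})$ gives $\trace \Sigma(A) = \EE_\Pi\|\Pi\mu - \bar{\mu}\|_2^2 \leq \EE_\Pi\|\Pi\mu - \mu\|_2^2 = \BigO(\bar{h}\,\|x^*\|_2^2)$ after averaging over $A$, and one bounds $I(\Pi^*; y \mid A) \leq \tfrac{1}{2}\EE_A \log\det\left(I_n + \sigma^{-2}\Sigma(A)\right)$ through the spectrum of $\Sigma(A)$ sitting \emph{on top of} $\sigma^2 I_n$ --- a corrected statement that does not take the form \eqref{eq:var}.
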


We prove Lemma~\ref{lem:det} in Section~\ref{proof:det}. Taking it as given, we are now in a position to prove Proposition \ref{rem:lowerbound}.

\begin{proof}[Proof of Proposition \ref{rem:lowerbound}]
We begin by using Fano's inequality to bound the probability of error of any tester random ensemble of $\Pi^*$. 
In particular, for any estimator $\Pihat$ that is a measurable function of the pair $(y, A)$, we have

\begin{align}
\Pr\{\Pihat \neq \Pi^* \} \geq 1 - \frac{I(\Pi^*; y, A) + \log 2}{\log |\hballpistar(\bar{h})|}. \label{eq:fano}
\end{align}
%Taking expectations over the randomness in $A$ yields
%\begin{align}
%\Pr\{\Pihat \neq \Theta\} \geq 1 - \frac{\EE_A\left[I(\Theta; y|A = a)\right] + \log 2}{\log |\hballpistar(\bar{h})|}. \label{eq:fano}
%\end{align}

Applying the chain rule for mutual information yields
\begin{align}
I(\Pi^*; y, A) &= I(\Pi^*; y | A) + I(\Pi^*, A) \nonumber \\
&\stackrel{\1}{=} I(\Pi^*; y | A) \nonumber \\
&= \EE_A \left[ I(\Pi^*; y | A = \alpha) \right],
\end{align}
where step $\1$ follows since $\Pi^*$ is chosen independently of $A$. We now evaluate the mutual information term $I(\Pi^*; y|A = \alpha)$, which we denote by $I_\alpha (\Pi^*; y)$. 
%Note that for a fixed realization of the design matrix $A$, the observation vector $y$ is drawn from the mixture distribution 
%\begin{align}
%\mathbb{M} = \frac{1}{|\mathcal{P}_n|} \sum_{\Pi \in \mathcal{P}_n} \mathbb{P}_\Pi, \label{mixture}
%\end{align} 
%where $\mathbb{P}_\Pi$ denotes a Gaussian distribution $\NORMAL(\Pi a x^*, \sigma^2 I_n)$.
Letting ${H_\alpha(y) := H(y | A=\alpha)}$ denote the conditional entropy of $y$ given a fixed realization of $A$, we have
\begin{align}
I_\alpha (\Pi^*; y) &= H_\alpha (y) - H_\alpha (y | \Pi^*) \nonumber \\
&\stackrel{\2}{\leq} \frac{1}{2} \log \det \cov yy^\top - \frac{n}{2}\log \sigma^2, \nonumber
\end{align}
where the covariance is evaluated with $A= \alpha$, and in step $\2$, we have used two key facts:

\noindent (a) Gaussians maximize entropy for a fixed covariance, which bounds the first term, and 

\noindent (b) For a fixed realization of $\Pi^*$, the vector $y$ is composed of $n$ uncorrelated Gaussians. This leads to an explicit evaluation of the second term.

Now taking expectations over $A$ and noting that $\cov yy^\top \preceq \EE_w \left[ yy^\top\right]$, we have from the concavity of the log determinant function and Jensen's inequality that
\begin{align}
I(\Pi^* ; y | A) &= \EE_A \left[ I_\alpha (\Pi^*; y) \right] \nonumber \\
&\leq \frac{1}{2} \log\det \EE \left[ yy^\top \right] - \frac{n}{2}\log \sigma^2, \label{eq:mi}
\end{align}
where the expectation in the last line is now taken over randomness in both $A$ and $w$.
%To complete the proof, we use the upper bound $\det \EE \left[yy^\top\right] \leq \left( \frac{\trace \EE \left[yy^\top\right]}{n} \right)^n$, which is a consequence of the AM-GM inequality for PSD matrices. So we only need to compute the diagonal elements of the matrix $Y \triangleq \EE \left[yy^\top\right]$ for an upper bound. Note that we have 
%\begin{align}
%Y_{ii} &= (x^*)^\top \EE \left[a_{\pi_i} a_{\pi_i}^\top\right] x^* + \EE \left[w_i w_i^\top\right] \\
%&= \|x^*\|_2^2 + \sigma^2.
%\end{align}

%A little calculation using this chain of inequalities yields
%\begin{align}
%\EE_A \left[ I_a (\Theta; y) \right] \leq \frac{n}{2} \log \left( 1 + \SNR \right).
%\end{align}

Applying Lemma \ref{lem:det}, we can then substitute inequality~\eqref{eq:var} into bound~\eqref{eq:mi}, which yields
\begin{align}
\EE_A \left[ I_\alpha (\Pi^*; y) \right] \leq \frac{n}{2} \log \left( 1 + \SNR \right) + \frac{n-1}{2} \log \frac{\bar{h}}{n} + \frac{1}{2}\log \left( 1 + n\right). \nonumber
\end{align}
Finally, substituting into the Fano bound~\eqref{eq:fano} yields the lower bound
\begin{align}
\Pr\{\Pihat \neq \Pi^*\} &\geq 1 - \frac{\frac{n}{2} \log \left( 1 + \SNR \right) + \frac{n-1}{2} \log \frac{\bar{h}}{n} + \frac{1}{2} \log \left( 1 + n\right) + \log 2}{\log |\hballpistar(\bar{h})|} \nonumber\\
&\stackrel{\2}{\geq} 1 - \frac{\frac{n}{2} \log \left( 1 + \SNR \right) + \frac{n-1}{2} \log \frac{\bar{h}}{n} + \frac{1}{2} \log \left( 1 + n\right) + \log 2}{\bar{h}\log (n/e)}, \nonumber
\end{align}
where in step $\2$, we have used the fact that 
\begin{align}
|\hballpistar(\bar{h})| = \binom{n}{\bar{h}}\cdot \bar{h}! 
\geq \left( n / \bar{h} \right)^{\bar{h}} \left(\frac{\bar{h}}{e}\right)^{\bar{h}}. \nonumber
\end{align}
In other words, if
%\begin{align}
%\frac{n}{2} \log \left( 1 + \SNR \right) + \frac{n-1}{2} \log \frac{\widebar{h}}{n} + \frac{1}{2}\log \left( 1 + n\right) + \log 2 \leq \frac{\widebar{h}}{2} \log (n/e), \label{condnlb}
%\end{align} 
%then . Some calculation shows that condition~\eqref{condnlb} is equivalent to
\begin{align}
\log \left[4\left( 1 + \SNR \right)\right] \leq \frac{\bar{h}}{n} \log (n/e) + \frac{n-1}{n}\log (n/ \bar{h})  - \frac{\log \left(1+n\right)}{n}, \label{condnlb2}
\end{align}
then $\Rpihat \geq 1/2$ for any estimator $\Pihat$.
%Thus, if $\bar{h} = \Omega(n)$ or $\bar{h} = O(n^{1-\delta})$ for any constant $\delta>0$, then we have that $\log (1+\SNR) \leq c_u \log n$ implies that $\Rpihat\geq 1/2$.
%Some more straightforward algebra completes the proof.
Evaluating condition~\eqref{condnlb2} for $\bar{h} = 2$ yields the required result. 
\end{proof}
%\noindent It remains to prove Lemma~\ref{lem:det}.

\subsubsection{Proof of Lemma~\ref{lem:det}} \label{proof:det}
We first explicitly calculate the matrix $Y := \EE \left[yy^\top\right]$. Note that the diagonal entries take the form 
\begin{align}
Y_{ii} = (x^*)^\top \EE \left[a_{\pi_i} a_{\pi_i}^\top\right] x^* + \EE \left[w^2_{\pi_i} \right] 
= \|x^*\|_2^2 + \sigma^2. \nonumber
\end{align}
The off-diagonal entries can be evaluated as
\begin{align}
Y_{ij} = (x^*)^\top \EE \left[ a_{\pi_i} a_{\pi_j}^\top\right] x^* + \EE \left[w_{\pi_i} w_{\pi_j}\right]
\stackrel{\1}{=} \left(\frac{n-\bar{h}}{n} + \frac{\bar{h}}{n^2}\right) \left(\|x^*\|_2^2 + \sigma^2\right), \text{ for } i\neq j, \nonumber
\end{align}
where step $\1$ follows since 
\begin{align}
\pi_i = \pi_j \text{ with probability } \frac{n-\bar{h}}{n} + \frac{\bar{h}}{n^2}. \label{hprob}
\end{align}
Equation~\eqref{hprob} is a consequence of the fact that a uniform permutation over $\hballpistar(\bar{h})$ can be generated by first picking $\bar{h}$ positions (the permutation set) out of $[n]$ uniformly at random, and then uniformly permuting those $\bar{h}$ positions. The probability that $\pi_i = \pi_j$ is equal to the probability that $\pi_i = i$, an event that occurs if:

\noindent (a) position $i$ is not chosen in the permutation set, which happens with probability $\frac{n-\bar{h}}{n}$, or if 

\noindent (b) position $i$ is in the permutation set but the permutation maps $i$ to itself, which happens with probability $\frac{\bar{h}}{n} \frac{1}{n}$.
%\EE_A \left[a_{\pi_i} a_{\pi_j}^\top \right] = I$ when $a_{\pi_i} = a_{\pi_j}$, which happens with probability $1/n$, and $0$ otherwise.

Hence, the determinant of $Y$ is given by $\det Y = (\|x^*\|_2^2 + \sigma^2)^n \det \widebar{Y}$, where we have defined $\widebar{Y} := \frac{1}{\|x^*\|_2^2 + \sigma^2} Y$. Note that $\widebar{Y}$ is a highly structured matrix, and so its determinant can be computed exactly. In particular, letting the scalar $\beta = 1$ denote the identical diagonal entries of $\widebar{Y}$ and the scalar $\gamma$ denote its identical off-diagonal entries, 
%it is an $n\times n$ matrix of the form
%\begin{align}
%\begin{pmatrix}
%D & F & F & \cdots & F \\
%F & D & F & \cdots & F \\
%  &   & \vdots &   &    \\
%F & \cdots & F & F & D
%\end{pmatrix}. \nonumber
%\end{align}
it is easy to verify that the all ones vector ${\bf 1}$ is an eigenvector of $\widebar{Y}$, with corresponding eigenvalue $\beta + (n-1)\gamma$. Additionally, for any vector $v$ that obeys ${\bf 1}^\top v  = 0$, we have
\begin{align}
\widebar{Y} v = (\beta - \gamma) v + \gamma ( v^\top {\bf 1}) {\bf 1} = (\beta - \gamma) v, \nonumber
\end{align}
and so the remaining $n-1$ eigenvalues are identically $\beta- \gamma$.

Substituting for $\beta$ and $\gamma$, the eigenvalues of $\widebar{Y}$ are given by 
$$\lambda_1(\widebar{Y}) = 1 + \frac{(n - \bar{h})(n-1)}{n} + \frac{\bar{h}(n-1)}{n^2}, \text{ and}$$
$$\lambda_2(\widebar{Y}) = \lambda_3(\widebar{Y}) = \cdots = \lambda_n(\widebar{Y}) = \frac{\bar{h}}{n} - \frac{\bar{h}}{n^2}.$$

%%Note that by Gershgorin's circle theorem (refer to Appendix~\ref{} for formal statement), each eigenvalue of $\bar{Y}$ lies in the circle $B(1 + \frac{\sigma^2}{\|x^*\|_2^2}, \frac{n-1}{n})$, where $B(c,R)$ denotes a circle with centre $c$ and radius $R$. The matrix is symmetric and has real eigenvalues, so each eigenvalue is at most $2 + \frac{\sigma^2}{\|x^*\|_2^2}$. Therefore,

Hence, we have
\begin{align}
\det \widebar{Y} &= \left(1 + \frac{(n - \bar{h})(n-1)}{n} + \frac{\bar{h}(n-1)}{n^2}\right) \left( \frac{\bar{h}}{n} - \frac{\bar{h}}{n^2}\right)^{n-1} \nonumber\\
&\leq \left(1 + n - \bar{h} + \frac{\bar{h}}{n}\right) \left( \frac{\bar{h}}{n}\right)^{n-1} \nonumber\\
&\leq \left(1 + n\right) \left( \frac{\bar{h}}{n}\right)^{n-1}, \nonumber
\end{align}
where in the last step, we have used the fact that $0 < \bar{h} \leq n$. This completes the proof. \qed

\subsection{Proof of Theorem \ref{thm:approx}} \label{sec:approx}
We now prove Theorem \ref{thm:approx} for approximate permutation recovery. For any estimator $\Pihat$, we denote by the indicator random variable $E(\Pihat, D)$ whether or not the $\Pihat$ has acceptable distortion, i.e., $E(\Pihat, D) = \mathbb{I}[ \dH(\Pihat, \Pi^*) \geq D]$, with $E= 1$ representing the error event. For $\Pi^*$ picked uniformly at random in $\mathcal{P}_n$, we have the following variant of Fano's inequality.
\begin{lemma} \label{fano2}
The probability of error is lower bounded as
\begin{align}
\Pr\{E(\Pihat, D) = 1\} \geq 1 - \frac{I(\Pi^*; y, A) + \log 2}{\log n! - \log \frac{n!}{(n-D+1)!}}. \label{fano1}
\end{align}
\end{lemma}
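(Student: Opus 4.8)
The plan is to prove Lemma~\ref{fano2} as an instance of the list-decoding (or ``ball'') version of Fano's inequality, in which the approximate-recovery tolerance $D$ plays the role of a decoding radius. Since $\Pi^*$ is uniform over $\mathcal{P}_n$, we have $H(\Pi^*) = \log n!$, and because $\Pi^* \to (y, A) \to \Pihat$ forms a Markov chain, the data-processing inequality gives $I(\Pi^*; \Pihat) \leq I(\Pi^*; y, A)$, hence $H(\Pi^* \mid \Pihat) \geq \log n! - I(\Pi^*; y, A)$. The goal is then to upper bound $H(\Pi^* \mid \Pihat)$ in terms of the error probability $P_e := \Pr\{E(\Pihat, D) = 1\}$ and to rearrange.

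First I would decompose the posterior entropy using the binary error indicator $E = E(\Pihat, D)$ via $H(\Pi^* \mid \Pihat) = H(E \mid \Pihat) + H(\Pi^* \mid \Pihat, E)$, bounding $H(E \mid \Pihat) \leq \log 2$. I then condition the second term on the value of $E$: when $E = 0$ we have $\dH(\Pihat, \Pi^*) < D$, so $\Pi^*$ lies in the Hamming ball of radius $D-1$ about $\Pihat$, a set whose cardinality $\ell$ is independent of $\Pihat$ (Hamming distance on permutations is invariant under relabelling, since $\dH(\Pi, \Pihat) = \dH(\mathrm{id}, \Pihat\Pi^{-1})$), giving $H(\Pi^* \mid \Pihat, E = 0) \leq \log \ell$; when $E = 1$ the crude bound $H(\Pi^* \mid \Pihat, E = 1) \leq \log n!$ suffices. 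Combining yields $H(\Pi^* \mid \Pihat) \leq \log 2 + (1 - P_e)\log \ell + P_e \log n!$.

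Chaining this with the data-processing lower bound and rearranging gives $(1 - P_e)(\log n! - \log \ell) \leq I(\Pi^*; y, A) + \log 2$, that is, $P_e \geq 1 - \frac{I(\Pi^*; y, A) + \log 2}{\log n! - \log \ell}$. To match the stated denominator it then remains to establish the ball-size bound $\ell \leq \frac{n!}{(n - D + 1)!}$; since $\log \ell \leq \log \frac{n!}{(n-D+1)!}$ only enlarges the denominator, substituting the weaker value moves the bound in the correct direction and recovers the claimed inequality.

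The main obstacle is this last combinatorial estimate, where the precise constant matters. I would count permutations at Hamming distance at most $D-1$ from the identity by summing over the set $S$ of displaced coordinates, obtaining $\ell = \sum_{k=0}^{D-1}\binom{n}{k} D_k$, where $D_k$ is the number of derangements of $k$ elements. The naive bound $D_k \leq k!$ gives $\ell \leq \sum_{k} n^{\underline{k}}$, which is slightly larger than the target falling factorial $n^{\underline{D-1}} = \frac{n!}{(n-D+1)!}$; the gap is closed using $D_k \leq k!/2$ for all $k \geq 2$ (with equality at $k = 2$, and $D_k/k! \to 1/e$), together with the hypothesis $D > 2$, so that the geometric tail of the sum stays below $n^{\underline{D-1}}$ for $n$ larger than an absolute constant. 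Pinning down this clean constant is the only delicate point; the remainder is the standard Fano computation.
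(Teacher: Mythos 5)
Your proof is correct and takes essentially the same route as the paper: the standard Fano decomposition with the error indicator $E$, bounding $H(\Pi^* \mid \Pihat, E = 0)$ by the log-cardinality of the Hamming ball of radius $D-1$ and $H(\Pi^* \mid \Pihat, E = 1)$ by $\log n!$, then chaining with $H(\Pi^* \mid y, A) = \log n! - I(\Pi^*; y, A)$. The one place you diverge --- the derangement-sum estimate $\ell = \sum_{k=0}^{D-1}\binom{n}{k}D_k$ patched with $D_k \leq k!/2$ --- is more delicate than necessary: every permutation moving at most $D-1$ points fixes everything outside some $(D-1)$-subset, so the ball cardinality is at most $\binom{n}{D-1}(D-1)! = \frac{n!}{(n-D+1)!}$ by direct overcounting, which is exactly the bound the paper uses, with no $D > 2$ hypothesis or large-$n$ caveat needed.
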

Taking the lemma as given for the moment, we are now ready to prove Theorem \ref{thm:approx}.%and taking expectations on both sides of inequality \eqref{fano1} over the randomness in $A$ yields
%\begin{align}
%\Pr\{\Pihat \neq \Theta\} \geq 1 - \frac{\EE_A\left[I(\Theta; y|A = a)\right] + \log 2}{\log (n-D+1)!}.
%\end{align}

\begin{proof}[Proof of Theorem \ref{thm:approx}] 
The proof of the theorem follows by upper bounding the mutual information term. In particular, we have
\begin{align}
I(\Pi^*; y, A) &\leq \EE_A \left[ I_a (\Pi^*; y) \right] \nonumber \\
&\leq \frac{1}{2} \log\det \EE \left[ yy^\top \right] - \frac{n}{2}\log \sigma^2  \nonumber \\
&\stackrel{\1}{\leq} \frac{n}{2} \log \left( 1+ \SNR\right), \nonumber
\end{align}
where the expectation on the RHS is taken over both $\Pi^*$ and $A$.
Also, step $\1$ follows from the AM-GM inequality for PSD matrices $\det X \leq \left(\frac{1}{n}\trace X \right)^n$, and by noting that the diagonal entries of the matrix $\EE \left[ yy^\top \right]$ are all equal to ${\|x^*\|_2^2 + \sigma^2}$.

Combining the pieces, we now have that ${\Pr\{\Pihat \neq \Pi^*\} \geq 1/2}$ if
\begin{align}
n \log \left(1 + \SNR \right) \leq (n - D+1) \log \left(\frac{n-D+1}{2e} \right), \label{distcondn}
\end{align}
which completes the proof.
\end{proof}
%\noindent It remains to prove Lemma \ref{fano2}.
\subsubsection{Proof of Lemma \ref{fano2}}
We use the shorthand $E:= E(\Pihat, D)$ in this proof to simplify notation. Proceeding by the usual proof of Fano's inequality, we begin by expanding $H(E, \Pi^* | y, A = a, \Pihat)$ in two ways:
\begin{subequations} \label{eq:ent}
\begin{align}
H(E, \Pi^* | y, A, \Pihat) &= H(\Pi^* | y, A, \Pihat) + H(E | \Pi^*, y, A, \Pihat) \\
&= H(E | y, A, \Pihat) + H(\Pi^* | E, y, A, \Pihat).
\end{align}
\end{subequations}
Since $\Pi^* \rightarrow (y, A) \rightarrow \Pihat$ forms a Markov chain, we have $H(\Pi^* | y, A, \Pihat) = H(\Pi^* | y, A)$. Non-negativity of entropy yields $H(E|\Pi^*, y, A, \Pihat) \geq 0$. Since conditioning cannot increase entropy, we have
$H(E | y, A, \Pihat) \leq H(E) \leq \log 2$, and $H(\Pi^* | E, y, A, \Pihat) \leq H(\Pi^* | E, \Pihat)$. Combining all of this with equations \eqref{eq:ent} yields
\begin{align}
H(\Pi^* | y, A) &\leq H(\Pi^* | E, \Pihat) + \log 2 \nonumber \\
&= \Pr\{E = 1\} H(\Pi^* | E = 1, \Pihat) + \left(1 - \Pr\{E = 1\} \right) H(\Pi^* | E = 0, \Pihat) + \log 2. \label{ent2}
\end{align}
We now use the fact that uniform distributions maximize entropy to bound the two terms as $H(\Pi^* | E = 1, \Pihat) \leq H(\Pi^*) = \log n!$, and $H(\Pi^* | E = 0, \Pihat) \leq \log \frac{n!}{(n-D+1)!}$, where the last inequality follows since $E = 0$ reveals that $\Pi^*$ is within a Hamming ball of radius $D - 1$ around $\Pihat$, and the cardinality of that Hamming ball is $\frac{n!}{(n-D+1)!}$.

Substituting back into inequality \eqref{ent2} yields
\begin{align}
&\Pr\{E = 1\} \left( \log n! - \log \frac{n!}{(n-D+1)!} \right) + H(\Pi^*) \nonumber \\
& \qquad \geq H(\Pi^* | y, A) - \log 2 - \log \frac{n!}{(n-D+1)!} + \log n!, \nonumber
\end{align}
where we have added the term $H(\Pi^*) = \log n!$ to both sides. Simplifying then yields inequality \eqref{fano1}.
\qed

\subsection{Proofs of computational aspects}
In this section, we prove Theorem \ref{thm:1D} by providing an efficient algorithm for the $d = 1$ case and showing NP-hardness for the $d>1$ case.

\subsubsection{Proof of Theorem~\ref{thm:1D}: $d = 1$ case} \label{sec:1Dproof}
In order to prove the theorem, we need to show an algorithm that performs the optimization~\eqref{eq:MLperm} efficiently. Accordingly, note that for the case when $d = 1$, equation~\eqref{eq:MLperm} can be rewritten as
\begin{align}
\Pihatml &= \arg\max_{\Pi} \| a_{\Pi}^\top y\|^2  \nonumber\\
&= \arg\max_{\Pi} \max\left\{ a_{\Pi}^\top y, -a_{\Pi}^\top y \right\}  \nonumber\\
&= \arg\min_{\Pi} \max\Big\{ \| a_{\Pi} -  y \|_2^2, \|a_{\Pi} + y \|_2^2 \Big\}, \label{eq:mindistance}
\end{align}
where the last step follows since $2 a_{\Pi}^\top y = \|a \|^2 + \|y\|^2 - \| a_{\Pi} -  y \|_2^2$, and the first two terms do not involve optimizing over $\Pi$.

Once the optimization problem has been written in the form~\eqref{eq:mindistance}, it is easy to see that it can be solved in polynomial time. In particular, using the fact that for fixed vectors $p$ and $q$, $\|p_{\Pi} - q\|$ is minimized for $\Pi$ that sorts $a$ according to the order of $b$, we see that Algorithm~\ref{algo:determ} computes $\Pihatml$ exactly.

\begin{algorithm}
\KwIn{design matrix (vector) $a$, observation vector $y$}
$\Pi_1 \gets$ permutation that sorts $a$ according to $y$ \\
$\Pi_2 \gets$ permutation that sorts $-a$ according to $y$ \\
$\Pihatml \gets  \arg\max \{|a_{\Pi_1}^\top y|, |a_{\Pi_2}^\top y| \}$ \\
\KwOut{\Pihatml}
\caption{Exact algorithm for implementing equation~\eqref{eq:MLperm} for the case when $d=1$.} %The \textsc{RoundRepeat} algorithm is obtained by introducing an outer loop that iterates $\ell$ from $\frac{F_{min}}{\ln (n\ln n)}$ to $F_{min}$}
\label{algo:determ}
\end{algorithm}

The procedure defined by Algorithm~\ref{algo:determ} is clearly the correct thing to do in the noiseless case: in this case, $x^*$ is a scalar value that scales the entries of $a$, and so the correct permutation can be identified by a simple sorting operation. Two such operations suffice, one to account for when $x^*$ is positive and one more for when it is negative. Since each sort operation takes $\BigO(n\log n)$ steps, Algorithm~\ref{algo:determ} can be executed in nearly linear time.
\qed

\subsubsection{Proof of Theorem~\ref{thm:1D}: NP-hardness} \label{sec:nphardness}
In this section, we show that given a vector matrix pair $(y,A) \in \real^n \times \real^{n\times d}$, it is NP-hard to determine whether the equation $y = \Pi A x$ has a solution for a permutation matrix $\Pi \in \mathcal{P}_n$ and vector $x \in \real^d$.
Clearly, this is sufficient to show that the problem~\eqref{eq:ML} is NP-hard to solve in the case when $A$ and $y$ are arbitrary.

 %Assume that we could solve problem~\eqref{eq:ML} efficiently for all input instances, and note that the objective value of problem~\eqref{eq:ML} is $0$ if and only if $y = \Pi A x$ has a solution for some vector $x$ and permutation matrix $\Pi$. Hence, we can transform the solution of problem~\eqref{eq:ML} to answer the decision problem of Theorem~\ref{thm:NPhardness} efficiently, thus contradicting the NP-hardness claim.

Our proof involves a reduction from the \textsf{PARTITION} problem, the decision version of which is defined as the following.
\begin{definition}[\textsf{PARTITION}]
Given $d$ integers $b_1, b_2, \cdots, b_d$, does there exist a subset $S \subset [d]$ such that $$\sum_{i \in S} b_i = \sum_{i \in [d]\setminus S} b_i?$$
\end{definition} 

It is well known \cite{papadimitriou1998combinatorial} that
\textsf{PARTITION} is NP-complete.
Also note that asking whether or not equation~\eqref{obsmodel} has a solution $(\Pi, x)$ is equivalent to determining whether or not there exists a permutation $\pi$ and a vector $x$ such that $y_\pi = Ax$ has a solution. We are now ready to prove the theorem.

Given an instance $b_1, \cdots b_d$ of \textsf{PARTITION}, define a vector $y \in \mathbb{Z}^{2d+1}$ with entries
\begin{align}
y_i :=
\begin{cases}
b_i, \text{ if } i \in [d] \\
0, \text{ otherwise.} \nonumber
\end{cases}
\end{align}
Also define the $2d+1 \times 2d$ matrix
\begin{align}
A := 
\begin{bmatrix}
\multicolumn{2}{c}{ I_{2d}} \\ 
1_d^\top &  -1_d^\top
\end{bmatrix}. \nonumber
\end{align}
Clearly, the pair $(y, A)$ can be constructed from $b_1, \cdots b_d$ in time polynomial in $n = 2d + 1$. We now claim that $y_\pi = A x$ has a solution $(\Pi, x)$ if and only if there exists a subset $S \subset [d]$ such that $\sum_{i \in S} b_i = \sum_{i \in [d]\setminus S} b_i$. 

By converting to row echelon form, we see that $y_\pi = A x$ if and only if
\begin{align}
\sum_{i \mid \pi(i) \leq d} y_{\pi(i)} = \sum_{i \mid \pi(i) > d} y_{\pi(i)}, \label{eq:np}
\end{align}
and equation~\eqref{eq:np} holds, by construction, if and only if for $S = \{i \mid \pi(i) \leq d\} \cap [d]$, we have 
$$\sum_{i \in S} b_i = \sum_{i \in [d]\setminus S} b_i.$$
This completes the proof. 
\qed
%%%%%%%%%%%%%%%%%%%%%%%%%%%%%%%%%%%%%%%%%%%%%%%%%%%%%%%%%%%%%%%%%%%%%%%

\section{Discussion}
We analyzed the problem of exact permutation recovery in the linear regression model, and provided necessary and sufficient conditions that are tight in most regimes of $n$ and $d$. We also provided a converse for the problem of approximate permutation recovery to within some Hamming distortion. It is still an open problem to characterize the fundamental limits of exact and approximate permutation recovery for all regimes of $n$, $d$ and the allowable distortion $D$. In the context of exact permutation recovery, we believe that the limit suggested by Theorem~\ref{thm:upperbound} is tight for all regimes of $n$ and $d$, but showing this will likely require a different technique. In particular, as pointed out in Remark~\ref{rem:sideinfo}, all of our lower bounds assume that the estimator is provided with $x^*$ as side information; it is an interesting question as to whether stronger lower bounds can be obtained without this side information.

On the computational front, many open questions remain. The primary question concerns the design of computationally efficient estimators that succeed in similar SNR regimes. We have already shown that the maximum likelihood estimator, while being statistically optimal for moderate $d$, is computationally hard to compute in the worst case. Showing a corresponding hardness result for random $A$ is also an open problem.
Finally, while this paper mainly addresses the problem of permutation recovery, the complementary problem of recovering $x^*$ is also interesting, and we plan to investigate its fundamental limits in future work.
%%%%%%%%%%%%%%%%%%%%%%%%%%%%%%%%%%%%%%%%%%%%%%%%%%%%%%%%%%%%%%%%%%%%%%
\subsection*{Acknowledgements}
This work was partially supported by NSF Grants CCF-1528132 and CCF-0939370 (Center for Science of Information), Office of Naval Research MURI
grant DOD-002888, Air Force Office of Scientific Research Grant
AFOSR-FA9550-14-1-001, Office of Naval Research grant ONR-N00014, as
well as National Science Foundation Grant CIF-31712-23800.

%% APPENDIX %%%%%%%%%%%%%%%%%%%%%%%%%%%%%%%%%%%%%%%%%%%%%%%%%%%%%%%%%%%%

\appendix

\section{Auxiliary results}
In this section, we prove a preliminary lemma about permutations that is useful in many of our proofs. We also derive tight bounds on the lower tails of $\chi^2$-random variables and state an existing result on tail bounds for random projections.

\subsection{Independent sets of permutations} \label{sec:perm}
In this section, we prove a combinatorial lemma about permutations, for which we need to set up some additional notation. For a permutation $\pi$ on $k$ objects, let $G_\pi$ denote the corresponding undirected incidence graph, i.e., $V(G_\pi) = [k]$, and $(i, j) \in E(G_\pi)$ iff $j = \pi(i)$ or $i = \pi(j)$.
\begin{lemma}\label{lem:perm}
Let $\pi$ be a permutation on $k \geq 3$ objects such that $\dH(\pi, I) = k$. 
Then the vertices of $G_\pi$ can be partitioned into three sets $V_1$, $V_2$, $V_3$ such that each is an independent set, and $|V_1|, |V_2|, |V_3| \geq \lfloor\frac{k}{3} \rfloor \geq \frac{k}{5}$.
\end{lemma}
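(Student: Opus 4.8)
The plan is to exploit the very rigid structure of $G_\pi$. Since $\dH(\pi, I) = k$ means $\pi$ has no fixed point, its cycle decomposition consists entirely of cycles of length at least $2$. A cycle $(a_1 \, a_2 \, \cdots \, a_\ell)$ contributes the edges $a_1 a_2, a_2 a_3, \ldots, a_\ell a_1$ to $G_\pi$; for $\ell \geq 3$ this is a cycle graph $C_\ell$, while for $\ell = 2$ it degenerates to a single edge. Hence $G_\pi$ is a disjoint union of cycles (each of length $\geq 3$) and isolated edges, and in particular has maximum degree $2$. Such a graph is properly $3$-colourable, and a proper $3$-colouring is precisely a partition of $[k]$ into three independent sets; the only real content of the lemma is therefore the \emph{balance} requirement $|V_j| \geq \lfloor k/3 \rfloor$.

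First I would record a per-component fact: each component with $\ell_i$ vertices admits a proper $3$-colouring whose colour classes differ in size by at most one (each of size $\lfloor \ell_i/3\rfloor$ or $\lceil \ell_i/3\rceil$), and moreover one is free to choose which colours receive the larger classes. Writing $\ell_i = 3 q_i + s_i$ with $s_i \in \{0,1,2\}$, this is checked by colouring the vertices around the cycle in the repeating pattern $1,2,3,1,2,3,\ldots$ and then, if $s_i \neq 0$, recolouring the last one or two vertices to repair the wrap-around edge; relabelling the three colours afterwards lets the $s_i$ surplus vertices be directed to any prescribed colours. Each component thus contributes exactly $q_i$ vertices to every colour, plus $s_i$ ``surplus'' vertices that we may assign, subject to the constraint that when $s_i = 2$ the two surpluses must land on two \emph{distinct} colours.

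The crux is the global balancing step: I must assign the surpluses across components so that every colour class attains size at least $\lfloor k/3 \rfloor$. Each colour already receives the base total $Q := \sum_i q_i$, and since $\lfloor k/3\rfloor = Q + \lfloor E/3\rfloor$ with $E := \sum_i s_i = k - 3Q$ the total number of surplus vertices, it suffices to guarantee each colour at least $\lfloor E/3\rfloor$ surplus. I would do this greedily: process the components one at a time, maintaining the current surplus counts $(e_1, e_2, e_3)$, sending the lone surplus of an $s_i = 1$ component to a currently-minimal colour, and the two surpluses of an $s_i = 2$ component to the two currently-smallest colours. A short induction shows that this rule preserves the invariant $\max_c e_c - \min_c e_c \leq 1$ at every step, the only slightly delicate case being $s_i = 2$, which one verifies by examining the three possible sorted patterns of $(e_1,e_2,e_3)$. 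At termination, $\sum_c e_c = E$ together with the invariant forces $\min_c e_c = \lfloor E/3\rfloor$, so every colour class has size $Q + e_c \geq \lfloor k/3\rfloor$. The remaining inequality $\lfloor k/3\rfloor \geq k/5$ holds for all $k \geq 3$ by an elementary check, completing the proof.
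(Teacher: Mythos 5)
Your proposal is correct and follows essentially the same route as the paper's proof: decompose $G_\pi$ into disjoint cycles (with $2$-cycles degenerating to single edges, since $\dH(\pi, I) = k$ rules out fixed points), $3$-colour each cycle in a near-balanced round-robin fashion, and then relabel colours across components so that the class sizes differ by at most one, giving each class at least $\lfloor k/3 \rfloor \geq k/5$ vertices. Your greedy invariant argument merely makes explicit the balancing step that the paper dispatches with ``combined (with relabelling, if required).''
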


\begin{proof}
Note that for any permutation $\pi$, the corresponding graph $G_\pi$ is composed of cycles, and the vertices in each cycle together form an independent set. Consider one such cycle. We can go through the vertices in the order induced by the cycle, and alternate placing them in each of the $3$ partitions. Clearly, this produces independent sets, and furthermore, having $3$ partitions ensures that the last vertex in the cycle has some partition with which it does not share edges. If the cycle length $C \equiv 0 \pmod 3$, then each partition gets $C/3$ vertices, otherwise the smallest partition has $\lfloor C/3 \rfloor$ vertices. The partitions generated from the different cycles can then be combined (with relabelling, if required) to ensure that the largest partition has cardinality at most $1$ more than that of the smallest partition.
%Since each cycle forms an independent set, it is sufficient to prove the Lemma for each cycle individually. Also, since $h(\pi, I) = n$, the length of each cycle is at least $2$.
%Consider any such cycle $C$ of length $\ell \geq 3$. The lemma can be easily verified for $\ell = 3,4,5$. For $\ell$ even, the Lemma is trivially true, since choosing every alternate node in the cycle gives rise to two independent sets of size $\ell/2$, one of which can be further partitioned into two independent sets, each of size $\ell / 4$.
%For $\ell$ odd, we again split the cycle into two parts using alternating nodes. One of these parts will be an independent set, the other will involve two connected nodes. Splitting the second cycle into two parts such that the connected nodes end up in different parts yields $3$ independent sets of the graph, each of sufficiently large size.
%The case when $\ell = 2$ can be handled by adding each of the two nodes of the cycle $C$ to two of the smaller independent sets formed by splitting $G \setminus C$.
\end{proof}
%In our analysis to follow, we often encounter a sum of permuted random variables, and use Lemma~\ref{lem:perm} to partition them into $3$ random variables, each of which is a sum of independent random variables.

\subsection{Tails bounds on $\chi^2$ random variables and random projections} \label{sec:tailbound}
In our analysis, we require tight control on lower tails of $\chi^2$ random variables. The following lemma provides one such bound.

%To that end, we state the following result taken from \cite{dasguptajl}, without proof. \mycomment{Result is from wikipedia page (uses techniques from citation) but can be easily derived using MGFs and Chernoff. Should we include this?}
\begin{lemma} \label{lem:tailbound}
Let $Z_\ell$ denote a $\chi^2$ random variable with $\ell$ degrees of freedom. Then for all $p \in [0, \ell]$, we have
\begin{align}
\Pr \{Z_\ell \leq p\} \leq \left( \frac{p}{\ell} \exp\left(1 - \frac{p}{\ell} \right)\right)^{\ell/2} = \exp\left( - \frac{\ell}{2} \left[ \log \frac{\ell}{p} + \frac{p}{\ell} - 1 \right] \right)\label{eq:lowertail}
\end{align}
%Additionally, the upper tail satisfies the bound
%\begin{align}
%\Pr \{Z_\ell \geq p\} \leq \left( \frac{p}{\ell} \exp\left(1 - \frac{p}{\ell} \right)\right)^{\ell/2}, \label{eq:uppertail}
%\end{align}
%for $p > \ell$.
\end{lemma}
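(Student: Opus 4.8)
The plan is to establish \eqref{eq:lowertail} by a Chernoff (exponential Markov) bound on the lower tail. First I would use the fact that $Z_\ell$ is a sum of $\ell$ independent squared standard Gaussians, so that its moment generating function is $\EE[e^{t Z_\ell}] = (1 - 2t)^{-\ell/2}$ for every $t < 1/2$. Evaluating this at $t = -\lambda$ for any $\lambda \geq 0$ --- which always lies in the region of convergence --- gives the clean expression $\EE[e^{-\lambda Z_\ell}] = (1 + 2\lambda)^{-\ell/2}$.

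Next, for any $\lambda > 0$ the map $z \mapsto e^{-\lambda z}$ is decreasing, so the event $\{Z_\ell \leq p\}$ equals $\{e^{-\lambda Z_\ell} \geq e^{-\lambda p}\}$, and Markov's inequality applied to the nonnegative variable $e^{-\lambda Z_\ell}$ yields
\[
\Pr\{Z_\ell \leq p\} \;\leq\; e^{\lambda p}\,\EE[e^{-\lambda Z_\ell}] \;=\; e^{\lambda p}(1 + 2\lambda)^{-\ell/2}.
\]
It then remains only to optimize the right-hand side over the free parameter $\lambda$.

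Then I would minimize the exponent $g(\lambda) := \lambda p - \tfrac{\ell}{2}\log(1 + 2\lambda)$ over $\lambda \geq 0$. Solving $g'(\lambda) = p - \ell/(1 + 2\lambda) = 0$ gives $1 + 2\lambda^\star = \ell/p$, i.e.\ $\lambda^\star = (\ell - p)/(2p)$. This is the single place where the hypothesis $p \in [0, \ell]$ enters: it guarantees $\lambda^\star \geq 0$, so the stationary point is feasible (and $\lambda^\star = 0$ at the boundary $p = \ell$, where the bound degenerates to the trivial value $1$). Substituting $\lambda^\star$ back, and using $\lambda^\star p = (\ell - p)/2$ together with $\log(1 + 2\lambda^\star) = \log(\ell/p)$, collapses the bound to
\[
\exp\!\Big( \tfrac{\ell - p}{2} - \tfrac{\ell}{2}\log \tfrac{\ell}{p} \Big) \;=\; \exp\!\Big( -\tfrac{\ell}{2}\big[\log \tfrac{\ell}{p} + \tfrac{p}{\ell} - 1 \big] \Big),
\]
which is exactly the second form in \eqref{eq:lowertail}; rewriting the exponent as $\tfrac{\ell}{2}\log\big(\tfrac{p}{\ell}\,e^{1 - p/\ell}\big)$ recovers the first form.

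Since every step is a routine MGF computation followed by a one-dimensional optimization, there is no substantive obstacle here; the only point deserving care is the domain restriction. The constraint $p \leq \ell$ is precisely what keeps the optimal exponential-tilt parameter $\lambda^\star$ nonnegative, which is why the lemma is stated for $p \in [0, \ell]$ rather than for all $p \geq 0$.
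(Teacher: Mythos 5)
Your proof is correct and is essentially identical to the paper's own argument: a Chernoff bound using $\EE[e^{-\lambda Z_\ell}] = (1+2\lambda)^{-\ell/2}$, followed by optimizing over $\lambda$ to obtain $\lambda^\star = \tfrac{1}{2}\left(\tfrac{\ell}{p} - 1\right)$, with the hypothesis $p \in [0,\ell]$ ensuring $\lambda^\star \geq 0$. No gaps; your additional remark about the boundary case $p = \ell$ is a nice touch but not needed.
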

\begin{proof}
The lemma is a simple consequence of the Chernoff bound. In particular, we have for all $\lambda > 0$ that
\begin{align}
\Pr \{ Z_\ell \leq p \} &= \Pr \{ \exp(-\lambda Z_\ell) \geq \exp(-\lambda p)\}  \nonumber\\
&\leq \exp(\lambda p) \EE \left[ \exp(-\lambda Z_\ell) \right] \nonumber\\
&= \exp(\lambda p) (1 + 2\lambda)^{-\frac{\ell}{2}}. \label{eq:lasteq}
\end{align}
where in the last step, we have used $\EE \left[ \exp(-\lambda Z_\ell) \right] = (1 + 2\lambda)^{-\frac{\ell}{2}}$, which is valid for all $\lambda > -1/2$. Minimizing the last expression over $\lambda > 0$ then yields the choice $\lambda^* = \frac{1}{2} \left( \frac{\ell}{p} - 1 \right)$, which is greater than $0$ for all $0 \leq p \leq \ell$. Substituting this choice back into equation~\eqref{eq:lasteq} proves the lemma.
\end{proof}

We also state the following lemma for general sub-exponential random variables (see, for example, Wainwright~\cite[Theorem 2.2]{wainwrig2015high}). We use it in the context of $\chi^2$ random variables.
\begin{lemma} \label{lem:subexp}
Let $X$ be a sub-exponential random variable. Then for all $t> 0$, we have
\begin{align}
\Pr\{|X - \EE [X] | \geq t\} \leq c' e^{-c t}. \nonumber
\end{align}
\end{lemma}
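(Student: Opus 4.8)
The plan is to derive the two-sided tail bound directly from the moment-generating-function characterization of sub-exponentiality, using a Chernoff argument in which the free parameter $\lambda$ is \emph{fixed} rather than optimized. Recall that a sub-exponential random variable $X$ with mean $\mu := \EE[X]$ admits parameters $(\nu, \alpha)$ such that $\EE\bigl[e^{\lambda(X - \mu)}\bigr] \leq e^{\nu^2 \lambda^2/2}$ for all $|\lambda| < 1/\alpha$. The key observation is that the target bound $c' e^{-ct}$ has a purely linear exponent, so there is no need for the usual case split between the sub-Gaussian (quadratic) regime and the sub-exponential (linear) regime; fixing a single admissible $\lambda$ suffices.

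First I would treat the upper tail. Applying Markov's inequality to $e^{\lambda(X-\mu)}$ for any $\lambda \in (0, 1/\alpha)$ gives
\begin{align}
\Pr\{X - \mu \geq t\} &\leq e^{-\lambda t}\, \EE\bigl[e^{\lambda(X-\mu)}\bigr] \leq e^{\nu^2\lambda^2/2}\, e^{-\lambda t}. \nonumber
\end{align}
Rather than minimizing over $\lambda$, I would simply fix $\lambda_0 := 1/(2\alpha)$, which lies in the admissible range. This yields $\Pr\{X - \mu \geq t\} \leq e^{\nu^2/(8\alpha^2)}\, e^{-t/(2\alpha)}$ for all $t > 0$, i.e.\ a bound of the form $c' e^{-ct}$ with $c = 1/(2\alpha)$ and $c' = e^{\nu^2/(8\alpha^2)}$.

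Next I would handle the lower tail identically: since $-X$ is also sub-exponential with mean $-\mu$ and the same parameters, the argument above applied to $-X$ gives $\Pr\{\mu - X \geq t\} \leq c' e^{-ct}$ with the same constants. A union bound over the two one-sided events then gives $\Pr\{|X - \mu| \geq t\} \leq 2 c' e^{-ct}$, and absorbing the factor $2$ into the constant $c'$ completes the proof.

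The only subtlety --- and it is precisely what the fixed-$\lambda$ device resolves --- is that the claim must hold for \emph{all} $t > 0$, including small $t$ where the true probability can be close to $1$. Optimizing $\lambda$ would produce a quadratic exponent in the small-$t$ regime that does not match the stated linear form; by instead committing to the constant $\lambda_0$ one obtains a single clean exponential envelope $c' e^{-ct}$ valid on all of $(0,\infty)$, with the envelope being (harmlessly) vacuous for small $t$ since $c' \geq 1$ there. Thus there is no genuine obstacle here, only a choice of how to present the Chernoff step so that the conclusion takes the desired uniform form.
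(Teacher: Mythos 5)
Your proposal is correct as a proof of the lemma as literally stated, but note that the paper itself contains no proof of this lemma: it is quoted from Wainwright \cite[Theorem 2.2]{wainwrig2015high}, i.e.\ the standard Bernstein-type bound, whose proof is the same Chernoff computation you perform but with $\lambda$ \emph{optimized} subject to $\lambda < 1/\alpha$, yielding the two-regime bound $\Pr\{|X-\mu|\geq t\} \leq 2\exp\bigl(-\tfrac{1}{2}\min\{t^2/\nu^2,\, t/\alpha\}\bigr)$ (up to constants). Your fixed-$\lambda_0 = 1/(2\alpha)$ device is a legitimate shortcut: it trades the sharper quadratic exponent at small $t$ for a uniform linear envelope, and in fact the prefactor you obtain, $c' = e^{\nu^2/(8\alpha^2)}$, is exactly what one would need anyway to flatten the optimized two-regime bound into the single form $c'e^{-ct}$, since $\sup_{t\geq 0}\bigl(t/(2\alpha) - t^2/(2\nu^2)\bigr) = \nu^2/(8\alpha^2)$. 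So the two routes are essentially equivalent in output, with yours shorter and the cited one strictly stronger pointwise.

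One caution worth internalizing: your constants $c = 1/(2\alpha)$ and $c' = 2e^{\nu^2/(8\alpha^2)}$ depend on the sub-exponential parameters $(\nu,\alpha)$ of $X$, and this dependence is unavoidable --- no absolute constants can work uniformly over all sub-exponential variables. This matters for how the lemma is actually used in the paper: it is applied to $\|Ax^*\|_2^2/\|x^*\|_2^2 \sim \chi^2_n$, whose parameter satisfies $\nu^2 = \Theta(n)$, so your prefactor $e^{\nu^2/(8\alpha^2)}$ becomes $e^{\Theta(n)}$ and the fixed-$\lambda$ envelope is vacuous at the deviation scale $t = n\delta$ unless $\delta$ exceeds a constant. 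The optimized (two-regime) bound from the cited reference does not suffer this and is what the application at $t = n\delta$ genuinely requires. Your proof is therefore fine for the statement in isolation, but the optimization step you deliberately skipped is not merely cosmetic in context.
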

%The second Lemma is a known result on the upper tail of $\chi^2$ random variables, see \cite[Example 2.5]{wainwrig2015high} for example.
%\begin{lemma} \label{lem:chiup}
%Let $Z_\ell$ denote a $\chi^2$ random variable with $\ell$ degrees of freedom. Then its upper tail obeys the bound
%\begin{align}
%\Pr\{ Z_{\ell} \geq \ell(1 + t) \} \leq e^{-\frac{\ell t^2}{8}} 
%\end{align}
%for all $t \in (0,1)$.
%\end{lemma}

Lastly, we require tail bounds on the norms of random projections, a problem that has been studied extensively in the literature on dimensionality reduction. The following lemma, a consequence of the Chernoff bound, is taken from Dasgupta and Gupta \cite[Lemma 2.2b]{dasguptajl}.
\begin{lemma}[\cite{dasguptajl}] \label{lem:jl}
Let $x$ be a fixed $n$-dimensional vector, and let $P^n_d$ be a projection matrix from $n$-dimensional space to a uniformly randomly chosen $d$-dimensional subspace. Then we have for every $\beta > 1$ that
\begin{align}
\Pr \{ \| P^n_d x\|_2^2 \geq \frac{\beta d}{n} \|x\|_2^2 \} \leq \beta^{d/2} \left(1 + \frac{(1 - \beta)d}{n -d} \right)^{(n - d)/2}. \label{eq:jlbound}
\end{align}
\end{lemma}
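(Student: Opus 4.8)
The plan is to prove this tail bound by combining rotational invariance with a Chernoff argument applied to a ratio of independent $\chi^2$ variables. First I would reduce to the unit-norm case: since both sides of the claimed inequality scale like $\|x\|_2^2$, I may assume $\|x\|_2 = 1$. By invariance of the uniform law over $d$-dimensional subspaces under orthogonal transformations, the distribution of $\|P^n_d x\|_2^2$ for a fixed unit vector $x$ coincides with that of the squared norm of the first $d$ coordinates of a uniformly random point on the sphere $S^{n-1}$. Representing such a point as $g / \|g\|_2$ for a standard Gaussian vector $g = (g_1, \ldots, g_n)$ gives the key distributional identity
\begin{align}
\|P^n_d x\|_2^2 \stackrel{d}{=} \frac{Z_d}{Z_d + Z_{n-d}}, \nonumber
\end{align}
where $Z_d \sim \chi^2_d$ and $Z_{n-d} \sim \chi^2_{n-d}$ are independent.

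Next I would linearize the tail event. Writing $L := \frac{\beta d}{n}$ and noting that the event has probability zero when $L \geq 1$ (because $\|P^n_d x\|_2^2 \leq \|x\|_2^2$), I restrict to $L < 1$ and rewrite
\begin{align}
\left\{ \frac{Z_d}{Z_d + Z_{n-d}} \geq L \right\} = \left\{ (1 - L) Z_d - L Z_{n-d} \geq 0 \right\}. \nonumber
\end{align}
Applying the exponential Markov inequality with a parameter $t > 0$, and using independence together with the chi-squared moment generating function $\EE[\exp(s Z_k)] = (1 - 2s)^{-k/2}$ (valid for $s < 1/2$), yields
\begin{align}
\Pr\left\{ (1 - L) Z_d - L Z_{n-d} \geq 0 \right\} \leq \bigl( 1 - 2t(1-L) \bigr)^{-d/2} \bigl( 1 + 2tL \bigr)^{-(n-d)/2}, \nonumber
\end{align}
provided $t(1-L) < 1/2$.

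Finally I would optimize over $t$. Differentiating the exponent and solving the stationarity condition gives $t^* = \frac{d(\beta - 1)}{2 n L (1-L)}$, which is strictly positive exactly under the hypothesis $\beta > 1$, and which satisfies the domain constraint since $t^*(1-L) = \frac{\beta - 1}{2\beta} < \frac{1}{2}$. Substituting $t^*$ one computes $1 - 2t^*(1-L) = 1/\beta$ and $1 + 2t^* L = \frac{n-d}{\,n - \beta d\,}$, so the first factor collapses to $\beta^{d/2}$ and the second to $\bigl(1 + \frac{(1-\beta)d}{n-d}\bigr)^{(n-d)/2}$, recovering the stated bound in equation~\eqref{eq:jlbound}.

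The main obstacle I anticipate is not a single difficult step but rather setting up the distributional reduction so that the Chernoff bound stays clean: one must verify that the optimizer $t^*$ respects the domain constraint $t(1-L) < 1/2$ of the $\chi^2_d$ moment generating function, which is precisely where the hypothesis $\beta > 1$ (together with the implicit $\beta < n/d$ that makes $L < 1$) enters. Everything downstream of identifying $t^*$ is routine algebraic simplification.
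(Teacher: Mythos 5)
Your proof is correct, and it is essentially the argument of the cited source: the paper itself does not prove this lemma but imports it from Dasgupta and Gupta, describing it as a consequence of the Chernoff bound, which is exactly your route (rotational invariance to reduce to $Z_d/(Z_d+Z_{n-d})$ with independent $\chi^2$ variables, linearize the event, bound via moment generating functions, and optimize $t$). Your verification that $t^*(1-L) = \frac{\beta-1}{2\beta} < \frac{1}{2}$ and the algebra collapsing the factors to $\beta^{d/2}\left(\frac{n-\beta d}{n-d}\right)^{(n-d)/2}$ both check out.
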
 

\subsection{Strong converse for Gaussian channel capacity} \label{shannon}
The following result due to Shannon \cite{shannon1959probability} provides a strong converse for the Gaussian channel. The non-asymptotic version as stated here was also derived by Yoshihara \cite{yoshihara}.
\begin{lemma}[\cite{yoshihara}] \label{lem:strconv}
Consider a vector Gaussian channel on $n$ coordinates with message power $P$ and noise power $\sigma^2$, whose capacity is given by $\widebar{R} = \log\left(1 + \frac{P}{\sigma^2} \right)$. For any codebook $\mathcal{C}$ with $|\mathcal{C}| = 2^{nR}$, if for some $\epsilon > 0$ we have
\begin{align}
R > (1+\epsilon) \widebar{R}, \nonumber
\end{align}
then the probability of error $p_e \geq 1 - 2 \cdot 2^{-n\epsilon}$ for $n$ large enough.
\end{lemma}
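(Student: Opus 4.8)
The plan is to prove the strong converse by an information-spectrum (information-density) argument, which delivers the stated \emph{non-asymptotic} bound $p_e \geq 1 - 2\cdot 2^{-n\epsilon}$ directly; Shannon's original geometric sphere-packing argument is an alternative, but it is considerably harder to make non-asymptotic and to track the exponent. First I would normalize so that each codeword obeys the power constraint $\|x\|_2^2 \leq nP$ (this is all a converse needs), and introduce the capacity-achieving output distribution $Q_Y = \NORMAL(0, (P+\sigma^2)I_n)$, against which I measure an information density.

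For a codeword $x$ and output $Y = x + Z$ with $Z \sim \NORMAL(0,\sigma^2 I_n)$, define
\begin{align}
i(x;Y) := \log \frac{P_{Y\mid X}(Y\mid x)}{Q_Y(Y)} = \frac{n}{2}\log\left(1 + \frac{P}{\sigma^2}\right) - \frac{\|Z\|_2^2}{2\sigma^2} + \frac{\|x+Z\|_2^2}{2(P+\sigma^2)}. \nonumber
\end{align}
A direct expectation computation (using $\EE\|Z\|_2^2 = n\sigma^2$ and $\EE\langle x,Z\rangle = 0$) gives $\EE[i(x;Y)] \leq n\bar{R}$, with $\bar{R}$ the per-symbol capacity and equality exactly when $\|x\|_2^2 = nP$; lower-power codewords only decrease the mean, which helps the converse. (The base-of-logarithm and factor-of-two conventions are absorbed into the definition of $\bar{R}$ used in the statement.)

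Next I would establish the meta-converse inequality: for $X$ uniform on the codebook $\mathcal{C}$ and any $\gamma > 0$,
\begin{align}
p_e \geq \Pr\left\{ i(X;Y) \leq \log|\mathcal{C}| - \gamma \right\} - e^{-\gamma}. \nonumber
\end{align}
This follows from the standard change-of-measure argument: split each decoding region $D_x$ according to whether $P_{Y\mid X}(y\mid x) \lessgtr |\mathcal{C}| e^{-\gamma} Q_Y(y)$, bound the contribution of the ``typical-under-$Q$'' part using disjointness of the decoding regions ($\sum_x Q_Y(D_x) \leq 1$), and bound the complementary part by the information-density tail. The crucial input is then concentration of $i(X;Y)$ around $n\bar{R}$: writing $i(x;Y) - \EE[i(x;Y)]$ as a linear combination of the centered variable $\|Z\|_2^2/\sigma^2 \sim \chi^2_n$ and the Gaussian $\langle x, Z\rangle$, both of which are sub-exponential, Lemma~\ref{lem:subexp} (with the $\chi^2$ control of Lemma~\ref{lem:tailbound}) yields $\Pr\{ i(X;Y) \geq n\bar{R} + t\} \leq c' e^{-ct}$ uniformly over codewords in the linear-tail regime. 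With $|\mathcal{C}| = 2^{nR}$ and $R > (1+\epsilon)\bar{R}$, choosing $\gamma = \tfrac12(\log|\mathcal{C}| - n\bar{R}) = \Theta(n\epsilon)$ makes $\log|\mathcal{C}| - \gamma$ exceed $n\bar{R}$ by a margin of order $n\epsilon$, so both $\Pr\{i(X;Y) > \log|\mathcal{C}| - \gamma\}$ and $e^{-\gamma}$ decay like $2^{-\Theta(n\epsilon)}$, giving $p_e \geq 1 - 2\cdot 2^{-n\epsilon}$ after absorbing constants (hence the ``for $n$ large enough'').

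The main obstacle is the final balancing step. The mean computation and the meta-converse inequality are routine, but the target form $1 - 2\cdot 2^{-n\epsilon}$ requires splitting the rate surplus $R - \bar{R} = \Theta(\epsilon)$ between the information-density deviation and the $e^{-\gamma}$ slack so that \emph{both} error terms collapse at the same exponential rate, and verifying that the required deviation $t = \Theta(n\epsilon)$ sits in the linear (rather than sub-Gaussian) regime of the sub-exponential bound, so that the decay is genuinely $e^{-\Theta(n)}$ and not merely $e^{-\Theta(n^2)}$ or weaker. Making the concentration uniform over all codewords (including those of strictly smaller power) and tracking the absolute constants through to the clean ``$2\cdot 2^{-n\epsilon}$'' is where the bookkeeping must be done carefully.
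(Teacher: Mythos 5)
First, note that the paper does not actually prove Lemma~\ref{lem:strconv}: it is imported wholesale from Shannon~\cite{shannon1959probability} and Yoshihara~\cite{yoshihara}, whose arguments are, respectively, a geometric cone-packing computation and a Chernoff-type large-deviations refinement of it. Your information-spectrum route is therefore a genuinely different, self-contained alternative, and its skeleton is sound: the formula for $i(x;Y)$, the bound $\EE[i(x;Y)]\leq n\bar{R}$ under the per-codeword constraint $\|x\|_2^2\leq nP$ (with equality at full power, and smaller power only helping), and the meta-converse $p_e \geq \Pr\{i(X;Y)\leq \log|\mathcal{C}|-\gamma\}-e^{-\gamma}$ via disjointness of the decoding regions under the auxiliary measure $Q_Y$ are all correct exactly as you describe them.

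The genuine gap is the step you yourself flag, and it does not merely need ``verification'' --- as written it fails. Invoking the generic sub-exponential bound of Lemma~\ref{lem:subexp} hides constants that are not absolute here: the centered information density is a combination of a $\chi^2_n$ fluctuation and a Gaussian $\langle x, Z\rangle$ whose variance proxies scale as $\Theta(n)$ with prefactors depending on $P/\sigma^2$. Consequently the correctly normalized tail at deviation $t=\Theta(n\epsilon)$ is $\exp\left(-c\, n \min(\epsilon^2,\epsilon)\right)$ with $c = c(P/\sigma^2)$: for small $\epsilon$ you are in the \emph{quadratic} regime of the Bernstein-type bound, yielding $2^{-\Theta(n\epsilon^2)}$, and even in the linear regime the constant multiplying $n\epsilon$ depends on the SNR. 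Since the lemma asserts the specific exponent $n\epsilon$ for \emph{every} fixed $\epsilon>0$ as $n\to\infty$, an exponent that is too small by a constant (or by a factor of $\epsilon$) cannot be repaired by ``absorbing constants'': for $n$ large the bound you obtain is strictly weaker than the one claimed. What you prove is a strong converse of the form $p_e \geq 1 - c'2^{-c n}$ for fixed $\epsilon$ and SNR, which would in fact suffice for the paper's application in Theorem~\ref{thm:lowerbound}, but it is not the stated lemma.

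The repair is to abandon the generic concentration inequality: $i(x;Y)$ is an explicit quadratic form in the Gaussian noise $Z$, so its moment generating function is available in closed form (exactly as in the proof of Lemma~\ref{lem:tailbound}), and an exact Chernoff/tilting computation gives the precise large-deviation exponent of $\Pr\{i(x;Y)\geq \log|\mathcal{C}|-\gamma\}$; optimizing the tilt and the split $\gamma$ of the rate surplus, and tracking the exponent through, is essentially the content of Yoshihara's proof. Relatedly, be careful with the normalization you propose to ``absorb'': the lemma states $\bar{R}=\log\left(1+\frac{P}{\sigma^2}\right)$ \emph{without} the factor $\frac{1}{2}$, while your information density carries $\frac{n}{2}\log\left(1+\frac{P}{\sigma^2}\right)$, and the codebook size is measured as $2^{nR}$ while your $e^{-\gamma}$ slack is in nats. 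The claimed exponent $n\epsilon$ lives precisely in this factor-of-two and base-of-logarithm bookkeeping (the rate surplus is $n\epsilon\bar{R}$, not $n\epsilon$), so this is the one place in the argument where the conventions cannot be deferred.
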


\section{Proof of Lemma~\ref{lem:noise}} \label{app:noiselemma}
We prove each claim of the lemma separately.
\subsection{Proof of claim~\eqref{noiseterm}}
To start, note that by definition of the linear model, we have $\| \Projorthpistar y\|_2^2 = \| \Projorthpistar w\|_2^2$. Letting $Z_\ell$ denote a $\chi^2$ random variable with $\ell$ degrees of freedom, we claim that
$$\| \Projorthpistar w\|_2^2 - \| \Projorthpi w\|_2^2 = Z_k - \tilde{Z}_k,$$ where $k := \min(d, \dH(\Pi, \Pi^*))$.

For the rest of the proof, we adopt the shorthand $\Pi \setminus \Pi' := {\sf range}(\Pi A) \setminus {\sf range}(\Pi' A)$, and $\Pi \cap \Pi' := {\sf range}(\Pi A) \cap {\sf range}(\Pi' A)$. Now, by the Pythagorean theorem, we have
$$\| \Projorthpistar w\|_2^2 - \| \Projorthpi w\|_2^2 = \| P_{\Pi} w\|_2^2 - \| P_{\Pi^*} w\|_2^2.$$ Splitting it up further, we can then write
$$\| P_{\Pi} w\|_2^2 = \| P_{\Pi \cap \Pi^*} w\|_2^2 + \| (P_{\Pi} - P_{\Pi \cap \Pi^*}) w\|_2^2,$$ where we have used the fact that $P_{\Pi \cap \Pi^*} P_{\Pi} = P_{\Pi \cap \Pi^*} = P_{\Pi \cap \Pi^*} P_{\Pi^*}$.

Similarly for the second term, we have $\| P_{\Pi^*} w\|_2^2 = \| P_{\Pi \cap \Pi^*} w\|_2^2 + \| (P_{\Pi^*} - P_{\Pi \cap \Pi^*}) w\|_2^2,$ and hence,

$$\| P_{\Pi} w\|_2^2 - \| P_{\Pi^*} w\|_2^2 = \| (P_{\Pi} - P_{\Pi \cap \Pi^*}) w\|_2^2 - \| (P_{\Pi^*} - P_{\Pi \cap \Pi^*}) w\|_2^2.$$

Now each of the two projection matrices above has rank\footnote{With probability 1} ${\sf dim} (\Pi \setminus \Pi^*) = k$, which completes the proof of the claim. 
To prove the lemma, note that for any $\delta > 0$, we can write $$\Pr\{ \Fspace_1(\delta) \} \leq \Pr\{|Z_k - k | \geq \delta/2 \} + \Pr\{|\tilde{Z}_k - k | \geq \delta/2 \}.$$ Using the sub-exponential tail-bound on $\chi^2$ random variables (see Lemma~\ref{lem:subexp} in Appendix~\ref{sec:tailbound}) completes the proof.

\subsection{Proof of claim~\eqref{signalterm}}
We begin by writing 
\begin{align}
\Pr\nolimits_w\{ \Fspace_2(\delta) \} = \Pr\nolimits_w\left \{ \underbrace{\|\Projorthpi \Pi^* A x^* \|^2_2  + 2 \langle \Projorthpi \Pi^* A x^*, \Projorthpi w\rangle}_{R(A, w)} \leq 2\delta \right\}. \nonumber
\end{align} 
We see that conditioned on $A$, the random variable $R(A, w)$ is distributed as $\mathcal{N}(T_\Pi, 4\sigma^2 T_\Pi)$, where we have used the shorthand $T_\Pi := \|\Projorthpi \Pi^* A x^* \|^2_2$.

So applying standard Gaussian tail bounds (see, for example, Wainwright~\cite[Example~2.1]{wainwrig2015high}), we have $$\Pr\nolimits_w \{\Fspace_2(\delta)\} \leq \exp \left( - \frac{(T_\Pi - 2\delta)^2}{8\sigma^2 T_\Pi}\right).$$ Setting $\delta = \delta^* := \frac{1}{3}T_\Pi$ completes the proof.
%%%%%%%%%%%% DO BIBLIOGRAPHY %%%%%%%%%%%%%%%%%%%%%%%%%%%%%%%%%%%%%%%%%%

\bibliographystyle{alpha}
\bibliography{research}

\newcommand{\etalchar}[1]{$^{#1}$}
\begin{thebibliography}{LdABN{\etalchar{+}}07}

\bibitem[Bal62]{balakrishnan1962problem}
A.~V. Balakrishnan.
\newblock On the problem of time jitter in sampling.
\newblock {\em IRE Transactions on Information Theory}, 8(3):226--236, 1962.

\bibitem[Blu11]{blumensath2011sampling}
T.~Blumensath.
\newblock Sampling and reconstructing signals from a union of linear subspaces.
\newblock {\em IEEE Transactions on Information Theory}, 57(7):4660--4671,
  2011.

\bibitem[CD16]{collier2016minimax}
O.~Collier and A.~S. Dalalyan.
\newblock Minimax rates in permutation estimation for feature matching.
\newblock {\em Journal of Machine Learning Research}, 17(6):1--31, 2016.

\bibitem[Cha15]{chatterjee}
S.~Chatterjee.
\newblock Matrix estimation by universal singular value thresholding.
\newblock {\em The Annals of Statistics}, 43(1):177--214, 2015.

\bibitem[CT06]{candesCS}
E.~J. Candes and T.~Tao.
\newblock Near-optimal signal recovery from random projections: Universal
  encoding strategies?
\newblock {\em Information Theory, IEEE Transactions on}, 52(12):5406--5425,
  2006.

\bibitem[DDDS04]{david2004softposit}
P.~David, D.~Dementhon, R.~Duraiswami, and H.~Samet.
\newblock Softposit: Simultaneous pose and correspondence determination.
\newblock {\em International Journal of Computer Vision}, 59(3):259--284, 2004.

\bibitem[DG03]{dasguptajl}
S.~Dasgupta and A.~Gupta.
\newblock An elementary proof of a theorem of {J}ohnson and {L}indenstrauss.
\newblock {\em Random Structures \& Algorithms}, 22(1):60--65, 2003.

\bibitem[EBDG14]{emiya2014compressed}
V.~Emiya, A.~Bonnefoy, L.~Daudet, and R.~Gribonval.
\newblock Compressed sensing with unknown sensor permutation.
\newblock In {\em Acoustics, Speech and Signal Processing (ICASSP), 2014 IEEE
  International Conference on}, pages 1040--1044. IEEE, 2014.

\bibitem[FJBd13]{fogel2013convex}
F.~Fogel, R.~Jenatton, F.~Bach, and A.~d'Aspremont.
\newblock Convex relaxations for permutation problems.
\newblock In {\em Advances in Neural Information Processing Systems}, pages
  1016--1024, 2013.

\bibitem[FMR16]{rigollet}
N.~Flammarion, C.~Mao, and P.~Rigollet.
\newblock Optimal rates of statistical seriation.
\newblock {\em arXiv preprint arXiv:1607.02435}, 2016.

\bibitem[HGG09]{huang2009fourier}
J.~Huang, C.~Guestrin, and L.~Guibas.
\newblock Fourier theoretic probabilistic inference over permutations.
\newblock {\em The Journal of Machine Learning Research}, 10:997--1070, 2009.

\bibitem[HM99]{huang1999cap3}
X.~Huang and A.~Madan.
\newblock {CAP3: A DNA} sequence assembly program.
\newblock {\em Genome Research}, 9(9):868--877, 1999.

\bibitem[KSF{\etalchar{+}}09]{keller2009identity}
L.~Keller, M.~J. Siavoshani, C.~Fragouli, K.~Argyraki, and S.~Diggavi.
\newblock Identity aware sensor networks.
\newblock In {\em INFOCOM 2009, IEEE}, pages 2177--2185. IEEE, 2009.

\bibitem[LD08]{lu2008theory}
Y.~M. Lu and M.~N. Do.
\newblock A theory for sampling signals from a union of subspaces.
\newblock {\em IEEE Transactions on Signal Processing}, 56(6):2334--2345, 2008.

\bibitem[LdABN{\etalchar{+}}07]{loiola2007survey}
E.~M. Loiola, N.~M.~M. de~Abreu, P.~O. Boaventura-Netto, P.~Hahn, and
  T.~Querido.
\newblock A survey for the quadratic assignment problem.
\newblock {\em European Journal of Operational Research}, 176(2):657--690,
  2007.

\bibitem[LR14]{little2014statistical}
R.~J.~A. Little and D.~B. Rubin.
\newblock {\em Statistical analysis with missing data}.
\newblock John Wiley \& Sons, 2014.

\bibitem[LW12]{loh2012corrupted}
P.~Loh and M.~J. Wainwright.
\newblock Corrupted and missing predictors: Minimax bounds for high-dimensional
  linear regression.
\newblock In {\em Information Theory Proceedings (ISIT), 2012 IEEE
  International Symposium on}, pages 2601--2605. IEEE, 2012.

\bibitem[MSC09]{marques2009subspace}
M.~Marques, M.~Sto{\v{s}}i{\'c}, and J.~Costeira.
\newblock Subspace matching: Unique solution to point matching with geometric
  constraints.
\newblock In {\em Computer Vision, IEEE 12th International Conference on},
  pages 1288--1294. IEEE, 2009.

\bibitem[NS08]{narayanan2008robust}
A.~Narayanan and V.~Shmatikov.
\newblock Robust de-anonymization of large sparse datasets.
\newblock In {\em Security and Privacy, 2008. SP 2008. IEEE Symposium on},
  pages 111--125. IEEE, 2008.

\bibitem[PG06]{poore2006some}
A.~B. Poore and S.~Gadaleta.
\newblock Some assignment problems arising from multiple target tracking.
\newblock {\em Mathematical and Computer Modelling}, 43(9):1074--1091, 2006.

\bibitem[PS98]{papadimitriou1998combinatorial}
C.~H. Papadimitriou and K.~Steiglitz.
\newblock {\em Combinatorial optimization: algorithms and complexity}.
\newblock Courier Corporation, 1998.

\bibitem[RMS12]{rose1}
C.~Rose, I.~S. Mian, and R.~Song.
\newblock Timing channels with multiple identical quanta.
\newblock {\em arXiv preprint arXiv:1208.1070}, 2012.

\bibitem[Rob51]{robinson1951method}
W.~S. Robinson.
\newblock A method for chronologically ordering archaeological deposits.
\newblock {\em American Antiquity}, pages 293--301, 1951.

\bibitem[SBGW15]{nihar}
N.~B. Shah, S.~Balakrishnan, A.~Guntuboyina, and M.~J. Wainright.
\newblock Stochastically transitive models for pairwise comparisons:
  Statistical and computational issues.
\newblock {\em arXiv preprint arXiv:1510.05610}, 2015.

\bibitem[Sha59]{shannon1959probability}
C.~E. Shannon.
\newblock Probability of error for optimal codes in a {G}aussian channel.
\newblock {\em Bell System Technical Journal}, 38(3):611--656, 1959.

\bibitem[SZ99]{schulman1999asymptotically}
L.~J. Schulman and D.~Zuckerman.
\newblock Asymptotically good codes correcting insertions, deletions, and
  transpositions.
\newblock {\em IEEE Transactions on Information Theory}, 45(7):2552--2557,
  1999.

\bibitem[TL08]{thrun2008simultaneous}
S.~Thrun and J.~J. Leonard.
\newblock Simultaneous localization and mapping.
\newblock In {\em Springer Handbook of Robotics}, pages 871--889. Springer,
  2008.

\bibitem[UHV15]{unl}
J.~Unnikrishnan, S.~Haghighatshoar, and M.~Vetterli.
\newblock Unlabeled sensing with random linear measurements.
\newblock {\em preprint arXiv:1512.00115}, 2015.

\bibitem[Wai15]{wainwrig2015high}
M.~J. Wainwright.
\newblock High-dimensional statistics: A non-asymptotic viewpoint.
\newblock {\em in preparation. University of California, Berkeley}, 2015.

\bibitem[YB99]{yang1999information}
Y.~Yang and A.~Barron.
\newblock Information-theoretic determination of minimax rates of convergence.
\newblock {\em Annals of Statistics}, pages 1564--1599, 1999.

\bibitem[Yos64]{yoshihara}
K.~Yoshihara.
\newblock Simple proofs for the strong converse theorems in some channels.
\newblock In {\em Kodai Mathematical Seminar Reports}, volume~16, pages
  213--222. Dept. of Mathematics, Tokyo Institute of Technology, 1964.

\bibitem[Yu97]{yu1997assouad}
B.~Yu.
\newblock Assouad, {F}ano, and {L}e {C}am.
\newblock In {\em Festschrift for Lucien Le Cam}, pages 423--435. Springer,
  1997.

\end{thebibliography}

%%%%%%%%%%%%%%%%%%%%%%%%%%%%%%%%%%%%%%%%%%%%%%%%%%%%%%%%%%%%%%%%%%%%%%%%
\end{document}